\newcommand*\patchAmsMathEnvironmentForLineno[1]{%
 \expandafter\let\csname old#1\expandafter\endcsname\csname #1\endcsname
 \expandafter\let\csname oldend#1\expandafter\endcsname\csname end#1\endcsname
 \renewenvironment{#1}%
    {\linenomath\csname old#1\endcsname}%
    {\csname oldend#1\endcsname\endlinenomath}}%
\newcommand*\patchBothAmsMathEnvironmentsForLineno[1]{%
 \patchAmsMathEnvironmentForLineno{#1}%
 \patchAmsMathEnvironmentForLineno{#1*}}%
\definecolor{brightmaroon}{rgb}{0.76, 0.13, 0.28}
\definecolor{linkblue}{rgb}{0, 0.337, 0.227}
\newcommand{\defin}[1]{\emph{\color{brightmaroon}#1}}
\DeclareMathOperator{\diam}{diam}
\DeclareMathOperator{\tw}{tw}
\DeclareMathOperator{\td}{td}
\DeclareMathOperator{\stw}{stw}
\DeclareMathOperator{\pw}{pw}
\title{\MakeUppercase{Asymptotically Optimal Vertex Ranking of Planar Graphs}\thanks{This research was partly funded by NSERC.}}
\author{Prosenjit Bose%
    \thanks{School of Computer Science, Carleton University}\quad
    Vida Dujmović%
    \thanks{Department of Computer Science and Electrical Engineering, University of Ottawa}\quad
    Mehrnoosh Javarsineh\footnotemark[2]  \quad
    Pat Morin\footnotemark[2]}
\newcommand{\rn}[1]{\chi_{\operatorname{#1-vr}}}
\newcommand{\irn}{\rn{\infty}}
\newcommand{\trn}{\rn{2}}
\newcommand{\lrn}{\rn{\ell}}
\newcommand{\dtcn}{\bar{\chi}_2}
\newcommand{\dlcn}{\bar{\chi}_\ell}
\newcommand{\scn}{\chi_{\star}}
\newtheorem{othertheorem}{Theorem}
\crefname{othertheorem}{Theorem}{Theorem}
\newtheoremstyle{named}{}{}{\itshape}{}{\bfseries}{.}{.5em}{#3}
\theoremstyle{named}
\newtheorem*{namedtheorem}{Unused}
\newcommand{\weirdref}[2]{\cref{#1}#2}
\newcommand{\weirdlabel}[2]{\label{#1-#1}}
\begin{document}
\begin{titlepage}
\maketitle

\begin{abstract}
  A (vertex) $\ell$-ranking is a colouring $\varphi:V(G)\to\N$ of the vertices of a graph $G$ with integer colours so that for any path $u_0,\ldots,u_p$ of length at most $\ell$, $\varphi(u_0)\neq\varphi(u_p)$ or $\varphi(u_0)<\max\{\varphi(u_0),\ldots,\varphi(u_p)\}$.  We show that, for any fixed integer $\ell\ge 2$, every $n$-vertex planar graph has an $\ell$-ranking using $O(\log n/\log\log\log n)$ colours and this is tight even when $\ell=2$; for infinitely many values of $n$, there are $n$-vertex planar graphs, for which any 2-ranking requires $\Omega(\log n/\log\log\log n)$ colours.  This result also extends to bounded genus graphs.

  In developing this proof we obtain optimal bounds on the number of colours needed for $\ell$-ranking graphs of treewidth $t$ and graphs of simple treewidth $t$.  These upper bounds are constructive and give $O(n)$-time algorithms.  Additional results that come from our techniques include new sublogarithmic upper bounds on the number of colours needed for $\ell$-rankings of apex minor-free graphs and $k$-planar graphs.
\end{abstract}
\end{titlepage}

%
\pagenumbering{arabic}

\section{Introduction}


A \defin{colouring} $\varphi:V(G)\to \N$ of a graph $G$ is a \defin{(vertex) $\ell$-ranking} of $G$ if, for every connected non-empty subgraph $X\subseteq G$ of diameter\footnote{The \defin{length} of a path $u_0,\ldots,u_p$ is the number, $p$, of edges in the path. A path is \defin{trivial} if its length is 0 and non-trivial otherwise. The \defin{distance} between two vertices $v$ and $w$ in a graph is the length of a shortest path that contains $v$ and $w$, or $\infty$ if $v$ and $w$ are in different components of $G$. The \defin{diameter} of a graph $G$ is the maximum distance between any pair of vertices in $G$.}  at most $\ell$, there exists exactly one vertex $v\in V(X)$ such that $\varphi(v)=\max\{\varphi(w):w\in V(X)\}$.  The \defin{$\ell$-ranking number} $\lrn(G)$ of $G$ is the minimum integer $k$ such that $G$ has an $\ell$-ranking $\varphi:V(G)\to \{1,\ldots,k\}$.  Note that, for any $\ell\ge 1$ any $\ell$-ranking of $G$ is a proper colouring\footnote{A colouring $\varphi:V(G)\to \N$ is \defin{proper} if, for each edge $vw\in E(G)$, $\varphi(v)\neq\varphi(w)$ and the \defin{chromatic number}, $\chi(G)$, of $G$ is the minimum integer $k$ such that there exists a proper colouring $\varphi:V(G)\to\{1,\ldots,k\}$ of $G$.} of $G$, so $\chi(G)\le \lrn(G)$, and any proper colouring of $G$ is a 1-ranking of $G$, so $\chi(G)=\rn{1}(G)$.

Besides the case $\ell=1$, two cases have received special attention: An $\infty$-ranking is called a \defin{vertex ranking} or \defin{ordered colouring}. The parameter $\irn(G)$ is called the \defin{vertex ranking number} of $G$ and is equal to the \defin{treedepth} of $G$ which is equal to the \defin{centered chromatic number} of $G$ \cite{nesetril.ossona:tree-depth}.
The parameter $\lrn(G)$ for finite $\ell\ge 2$ appears implicitly in a dynamic programming algorithm of \citet{deogun.kloks.ea:on} for computing $\irn(G)$ when $G$ is a $d$-trapezoid graph.  The case $\ell=2$ has also received special attention \cite{almeter.demircan.ea:graph,karpas.neiman.ea:on,shalu.antony:complexity}. A $2$-ranking is called a \defin{unique-superior colouring} by \citet{karpas.neiman.ea:on} who prove the following result:

\setcounter{othertheorem}{19}
\begin{othertheorem}[\cite{karpas.neiman.ea:on}]\label{trees}
    For every $n$-vertex tree $T$, $\trn(T)\in O(\log n/\log\log n)$ and this is asymptotically optimal: for infinitely many values of $n$, there exists an $n$-vertex tree $T$ with $\trn(T)\in\Omega(\log n/\log\log n)$.
\end{othertheorem}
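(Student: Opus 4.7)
The plan for the upper bound is a recursive decomposition that assigns one color per round. Given an $n$-vertex tree $T$, I would identify a set $S\subseteq V(T)$ such that $S$ is \emph{2-independent} in $T$ (pairwise distances at least $3$) and every component of $T\setminus S$ has at most $n/\log n$ vertices. Granting such an $S$, one colors all of $S$ with the top color and recursively 2-ranks each component of $T\setminus S$ using one fewer color; the 2-ranking condition is preserved at the boundary because any length-$2$ path containing a vertex of $S$ has that vertex (with the top color) in its middle, and the 2-independence of $S$ rules out length-$2$ conflicts between two top-colored vertices. The recurrence $k(n)\le k(n/\log n)+O(1)$ solves to $k(n)=O(\log n/\log\log n)$.

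The main obstacle for the upper bound is establishing the existence of such an $S$ for an arbitrary $n$-vertex tree. The natural candidate---the set of minimum-depth vertices whose rooted subtree exceeds $n/\log n$ in size---splits the tree into pieces of the right size but may not be 2-independent, since two such vertices could be sibling-children of a common parent. I would resolve this by partitioning the candidate set into a bounded number of 2-independent subsets (using, for each parent with many heavy-child candidates, a separate color) and arguing that the recursion absorbs the $O(1)$ overhead. Special structures such as a single high-degree centroid, or long paths where a direct periodic coloring suffices, would be handled by specialized subroutines.

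For the lower bound, I would construct rooted trees $\{R_k\}_{k\ge 1}$ with $|R_k|=\Theta(k!)$ and $\trn(R_k)\ge k$. Set $R_1$ to be a single vertex and, recursively, $R_k$ to be the tree formed by attaching $k$ disjoint copies of $R_{k-1}$ to a new root via their roots; then $|R_k|=1+k|R_{k-1}|=\Theta(k!)$, whence Stirling's formula yields $k=\Theta(\log|R_k|/\log\log|R_k|)$. The key claim is the strengthened invariant that every 2-ranking of $R_k$ using exactly $k$ colors assigns the largest color to the root of $R_k$. Assuming this invariant, suppose for contradiction that $R_k$ has a 2-ranking with only $k-1$ colors; the inductive lower bound forces each of the $k$ child-subtrees (copies of $R_{k-1}$) to use all $k-1$ colors, and the invariant then forces each subtree-root to take color $k-1$, forcing the root of $R_k$ to have color $\ge k$ and contradicting the budget.

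The delicate step for the lower bound is proving the strengthened invariant, which must be stated only for \emph{tight} $k$-colorings: with extra colors available, the root need not be at the maximum (for instance, $R_2=P_3$ admits the ranking $(2,1,3)$ with root colored $1$). Its inductive proof requires showing that in any tight $k$-coloring of $R_k$ each subtree copy must in turn be tightly $(k{-}1)$-coloured on a local palette on which, by induction, its root sits at the top, and then a combinatorial case analysis threads the local palettes of the $k$ subtrees together to force the root to the global maximum. This palette-threading is the combinatorial heart of the lower bound.
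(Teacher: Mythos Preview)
Your lower bound is broken: the strengthened invariant is false, and in fact $\trn(R_5)\le 4$. Colour $R_5$ by depth as follows: the root gets $4$; all depth-$1$ vertices get $3$; all depth-$2$ vertices get $2$; at depth $3$, give each depth-$2$ vertex two children coloured $1$ and one child coloured $4$; at depth $4$, give each colour-$1$ vertex children coloured $3$ and $4$, and give each colour-$4$ vertex two children coloured $1$. Every edge is proper, and every length-$2$ path with equal endpoints has a strictly larger interior vertex (the relevant cases are $(3,4,3)$, $(2,3,2)$, $(1,2,1)$, and $(1,4,1)$). So $\trn(R_5)\le 4<5$, and your claim $\trn(R_k)\ge k$ already fails at $k=5$. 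The construction that actually works (and that the paper records, following Karpas--Neiman--Smorodinsky) attaches \emph{more} copies than the colour budget: see \cref{apex-graph} and \cref{boost}, where one hangs $k+1$ copies off each apex so that raw pigeonhole---not any tightness invariant---forces two subtree witnesses to collide and push the apex colour up. With only $k$ copies you lose that pigeonhole step, and no amount of palette-threading recovers it.

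Your upper bound also has a genuine gap. The double star---two adjacent centres, each with $(n-2)/2$ pendant leaves---admits no $2$-independent set $S$ with all components of $T\setminus S$ of size at most $n/\log n$: if $S$ contains a centre then every other vertex is within distance $2$ of it, so $|S|=1$ and one component has size $n/2$; if $S$ contains no centre then $S$ consists of at most one leaf from each side and $T\setminus S$ is still connected of size $\ge n-2$. So the decomposition you need does not exist in general, and deferring to ``specialised subroutines'' is not a proof. The paper does not prove \cref{trees} directly (it is cited), but its general machinery derives the tree case from \cref{simple-t-trees} with $t=2$; that argument never seeks a $2$-independent separator but instead removes, at each round, a subgraph built from a weighted separator and its skeleton which is itself $\ell$-ranked with several colours rather than one.
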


The same authors prove the following result for planar graphs:

\setcounter{othertheorem}{15}
\begin{othertheorem}[\cite{karpas.neiman.ea:on}]\label{planar-graphs}
    For every integer $\ell$ and every $n$-vertex planar graph $G$, $\lrn(G)\in O(\ell\log n)$.
\end{othertheorem}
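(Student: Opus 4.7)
The plan is to combine a BFS-based layered decomposition with the standard recursive tree-decomposition algorithm for $\lrn$. Fix a BFS of $G$ from an arbitrary root, producing layers $L_0,L_1,\ldots$. Two facts drive the argument: (i) a classical theorem of Baker implies that any $\ell+1$ consecutive BFS layers of a planar graph induce a subgraph of treewidth $O(\ell)$; (ii) since consecutive vertices of any path differ in BFS layer by at most one, every path in $G$ of length at most $\ell$ lies inside some window of $\ell+1$ consecutive BFS layers.

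Next, I would prove the auxiliary bound $\lrn(G')\le(t+1)\lceil\log_2|V(G')|\rceil$ for every $m$-vertex graph $G'$ of treewidth $t$, via the familiar recursive-separator argument: take a balanced separator $S$ of size at most $t+1$, give it the top $t+1$ colours, and recurse on each component of $G'-S$ (whose vertex count has halved). Any path of length at most $\ell$ either stays inside one component (handled inductively) or meets $S$ and thereby contains a vertex of strictly higher colour than both endpoints, certifying condition~(ii) of the $\ell$-ranking definition. Combined with~(i), this $\ell$-ranks any $\ell+1$-layer window of $G$ with $O(\ell\log n)$ colours.

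To patch these windowwise rankings into one global $\ell$-ranking of $G$, I would use a layer-residue trick. Let $\pi\colon V(G)\to V(H)$ be the quotient that contracts each maximal vertical root-to-leaf path of the BFS tree to a point, and let $\alpha$ be an $\ell$-ranking of $H$; for planar $G$ the graph $H$ has bounded treewidth, so $\alpha$ can use $O(\log n)$ colours by the auxiliary bound. Assign each $v\in V(G)$ the composite colour $\varphi(v)=\alpha(\pi(v))(\ell+1)+(j(v)\bmod(\ell+1))$, where $j(v)$ is $v$'s BFS layer. Since any length-$\le\ell$ path in $G$ keeps its BFS indices inside an interval of length at most $\ell$, two path vertices in distinct layers have distinct residues mod $\ell+1$ and hence distinct $\varphi$-values, which automatically certifies condition~(i); the remaining case, of two endpoints in the same BFS layer, reduces to the $\ell$-ranking property of $\alpha$ applied to the $H$-projection of the path, since an excess in the $\alpha$-coordinate swamps the $(\ell+1)$-sized residue offset. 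The main obstacle is the last ingredient — showing that the BFS-contraction quotient $H$ really is of bounded treewidth and that rankings transfer cleanly through the composite formula — which is where the planar-specific combinatorics, effectively a product-structure-style argument for BFS layerings, must be carried out carefully.
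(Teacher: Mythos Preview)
The statement you are attempting is not proved in this paper; it is quoted from \cite{karpas.neiman.ea:on}. The paper's own contribution is the stronger \cref{planar}, proved via the product structure theorem (\cref{product-structure}) together with \cref{product-lemma}. So the relevant comparison is with that argument and with the likely approach of Karpas--Neiman--Shalev.

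Your composite colouring $\varphi(v)=\alpha(\pi(v))(\ell+1)+(j(v)\bmod(\ell+1))$ is exactly the mechanism of \cref{product-lemma}: an $\ell$-ranking of a quotient $H$ times a distance-$\ell$ colouring of the fibre (here, layer residues). The verification you sketch is correct \emph{provided} (a) each $\pi$-fibre meets each BFS layer in at most one vertex, (b) edges of $G$ project to edges or loops of $H$, and (c) $H$ has bounded treewidth. That is precisely the content of the product structure theorem, with $\pi$ the projection $H\boxtimes K_3\boxtimes P\to H\boxtimes K_3$.

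The genuine gap is your construction of $H$. ``Contracting each maximal vertical root-to-leaf path of the BFS tree'' is not well-defined: all such paths share the root, so the quotient is a single vertex. Even under a charitable reading---say, partitioning $V(G)$ into vertical paths of the BFS tree and contracting each---there is no reason the resulting minor has bounded treewidth; a BFS tree of a planar triangulation can branch at every vertex, giving fibres of size~$1$ and $H=G$. Producing a bounded-treewidth quotient whose fibres are vertical paths is exactly the hard part of \cite{dujmovic.joret.ea:planar}, and you cannot get it from the BFS tree alone.

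Separately, your steps (i)--(iii) (Baker plus the treewidth bound $\lrn\le(t+1)\lceil\log_2 n\rceil$) are correct but never feed into step~4; the windowwise rankings are discarded. If you want a self-contained proof of the $O(\ell\log n)$ bound that avoids product structure, the cleaner route---and closer to what \cite{karpas.neiman.ea:on} do---is to recurse on $G$ directly using balanced separators that are unions of $O(1)$ shortest paths from the BFS root. Each such path meets each layer once, so the level-$i$ separator can be $\ell$-ranked with $O(\ell)$ colours via layer residues, and $O(\log n)$ recursion levels give $O(\ell\log n)$ in total. This sidesteps the need for any quotient graph.
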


Since every tree is a planar graph and no better lower bound is known for planar graphs, this leaves an obvious question:  Which is the correct bound for 2-ranking $n$-vertex planar graphs, $\log n$ or $\log n/\log\log n$?  As it turns out, the strange truth is somewhere in between.  Let $\log x :=\ln x$ denote the natural logarithm of $x$ and define $\log^{(0)}x:=x$ and, for any integer $i>0$, let $\log^{(i)}x:=\log(\log^{(i-1)} x)$. We prove:\footnote{Refined versions of \cref{planar} and of the upcoming \cref{simple-t-trees,t-trees,bounded-genus,meta} that describes the dependence of $\lrn(G)$ on $\ell$ are presented in \cref{dependence-on-ell}.}

\begin{thm}\label{planar}
    For any fixed integer $\ell\ge 2$, every $n$-vertex planar graph $G$ has $\lrn(G)\in O(\log n/\log^{(3)} n)$ and this is asymptotically optimal: for infinitely many values of $n$, there exists an $n$-vertex planar graph $G$ with $\trn(G)\in \Omega(\log n/\log^{(3)} n)$
\end{thm}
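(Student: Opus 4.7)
The theorem splits into an $O(\log n/\log^{(3)}n)$ upper bound for every planar graph and every fixed $\ell\ge 2$, and a matching $\Omega(\log n/\log^{(3)}n)$ lower bound for $\ell=2$, so I would plan these separately.

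For the upper bound, my plan is to route through the product structure theorem of Dujmović, Joret, Micek, Morin, Ueckerdt and Wood: every planar graph $G$ is a subgraph of $H\boxtimes P$, where $P$ is a path and $H$ has simple treewidth at most a universal constant. Any path of length at most $\ell$ in $H\boxtimes P$ projects to a walk of length at most $\ell$ in $H$ and is confined to a window of $\ell+1$ consecutive vertices of $P$, so an $\ell$-ranking of $G$ interacts only locally with the $P$-coordinate. Combining this with the paper's tight bound for $\chi_\ell$ on bounded-simple-treewidth graphs (to be established in the earlier sections promised by the abstract), I would partition $V(P)$ into $O(\ell)$-thick blocks so that each block induces a bounded-(simple-)treewidth subgraph of $G$ and every $\ell$-path spans only $O(1)$ blocks; then rank each block by the treewidth bound and overlay a constant-size ``block colour'' to separate paths crossing block boundaries. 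The main technical obstacle is ensuring the combination costs only a single extra iterated logarithm, trading $\log^{(2)}n$ in the denominator of the treewidth bound for $\log^{(3)}n$, rather than the naive multiplicative blow-up one would get by pairing independent rankings of $H$ and $P$.

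For the lower bound, I would construct, for each integer $k$, a planar graph $G_k$ on at most $\exp(O(k\log^{(2)}k))$ vertices whose $2$-ranking number exceeds $k$; inverting gives $\trn(G_k)\in\Omega(\log n/\log^{(3)}n)$. The starting point is the extremal construction underlying \cref{trees}, which gives bad $n$-vertex trees forcing $\Omega(\log n/\log^{(2)}n)$ colours. Trees alone cannot give the stronger planar lower bound, so I would densify: assemble many carefully tuned copies of the bad tree into a single planar graph by joining them along a shared planar backbone (for instance a path or cycle of hub vertices) that introduces cross-copy adjacencies and thereby forces further separation among the colour classes of each copy. The main obstacle here is the matching lower-bound analysis: showing that any proposed $2$-ranking of the densified planar graph with fewer than $k$ colours must violate some length-$2$ ranking constraint, which requires a global argument that tracks interactions across tree copies rather than arguing copy by copy.
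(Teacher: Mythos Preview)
Your upper-bound plan has the right ingredients but mislocates the source of the $\log^{(3)}n$. In the paper, the product structure theorem gives $G\subseteq H\boxtimes K_3\boxtimes P$ with $\stw(H)\le 3$, and \cref{simple-t-trees} applied with $t=3$ already yields $\lrn(H)\in O(\log n/\log^{(3)}n)$ directly. The combination with the path factor is then handled by the one-line \cref{product-lemma}, $\lrn(G_1\boxtimes G_2)\le\lrn(G_1)\cdot\dlcn(G_2)$, together with $\dlcn(K_3\boxtimes P)\le 3(\ell+1)$; this costs only a constant multiplicative factor, not an iterated logarithm. There is no ``trading $\log^{(2)}n$ for $\log^{(3)}n$'' to be done: the exponent $3$ in $\log^{(3)}n$ is exactly the simple treewidth of $H$, not something produced by the interaction with $P$. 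Your block-partition scheme for $P$ is therefore unnecessary, and the obstacle you flag is not a real one.

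For the lower bound, your description is too vague to be assessed, and the intuition that a ``global argument tracking interactions across tree copies'' is needed is off. The paper's construction is a recursive \emph{boost}: given a graph $U$ with $\trn(U)\ge h$, form $U^{(h,m)}$ by building a tree of depth $m$ where each node is an apex over $hm{+}1$ disjoint copies of $U$. A short local lemma (\cref{apex-graph}) forces each apex to receive a colour at least $h$ larger than the minimum in its attached copies, and induction on layers then gives $\trn(U^{(h,m)})\ge hm+1$. Starting from the Karpas--Neiman--Smorodinsky trees and boosting once produces treewidth-$2$ (hence planar) graphs of size $(\log^{(1)}r)^{2r+o(r)}$ with $\trn\ge r$, which inverts to the claimed $\Omega(\log n/\log^{(3)}n)$. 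The analysis is entirely layer-by-layer via pigeonhole on the copies under a single apex; no cross-copy interaction argument of the kind you anticipate is required. Your ``backbone of hub vertices'' picture is in the right spirit but lacks the recursive structure and the parameter tuning ($h\approx\log r$, $m\approx r/\log r$) that make the size bound work.
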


Our proof of the upper bound in \cref{planar} makes use of a recent \defin{product structure theorem} of \citet{dujmovic.joret.ea:planar} which states that every planar graph $G$ is a subgraph of $H\boxtimes K_3\boxtimes P$ where $H$ is a planar graph of treewidth at most $3$, $K_3$ is a 3-cycle, $P$ is a path, and $\boxtimes$ denotes the strong graph product.\footnote{Definitions of $t$-trees, simple $t$-trees, treewidth, simple treewidth, and strong graph product appear later, in \cref{basics}.}  To apply this theorem, we prove the following result:

\begin{thm}\label{simple-t-trees}
    For any fixed integers $\ell\ge 2$ and $t\ge 1$, every $n$-vertex graph $H$ of simple treewidth at most $t$ has $\lrn(H) \in O(\log n/\log^{(t)}n)$ and this is asymptotically optimal: for any fixed integer $t\ge 1$ and infinitely many values of $n$, there exists an $n$-vertex graph $H$ of simple treewidth $t$ that has $\trn(H)\in\Omega(\log n/\log^{(t)} n)$.
\end{thm}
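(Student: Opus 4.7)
The plan is to prove both bounds by induction on $t$, using Karpas et al.'s tree result (\cref{trees}) as the starting point for simple treewidth~$1$, i.e., forests. Each step of the induction amplifies the bound by one additional iterated logarithm in the denominator.

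For the upper bound, I plan a recursive separator-based ranking. A graph $H$ of simple treewidth at most $t$ admits a tree decomposition of width~$t$, and from it one extracts a balanced separator bag $B$ of size at most $t+1$ whose removal leaves components of size at most $n/2$, each still of simple treewidth at most $t$. The naive recursion gives $B$ a fresh block of top colours and recurses, yielding $O(t\log n)$ colours---a factor $\log^{(t)} n$ too many. To save this factor I would group the $\log n$ recursion levels into super-levels whose sizes are tuned to the target bound. Within a super-level, colours are reused across sibling subproblems, which is safe because any length-$\ell$ path crossing between sibling pieces passes through a higher-coloured separator vertex from an earlier level. The within-super-level colouring is produced by applying the inductive hypothesis at parameter $t-1$ to the union of separator bags of that super-level, which I expect to embed into a simple $(t-1)$-tree (one ``slot'' of the bag is effectively frozen by the containing super-level's boundary). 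Telescoping across $O(\log n/\log^{(t)} n)$ super-levels then gives the desired bound.

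For the lower bound, I plan a recursive construction. The base graph is the Karpas--Neiman--Smorodinsky tree realising $\Omega(\log n/\log\log n)$. For $t\ge 2$, the graph $H_t$ is obtained by taking many copies of $H_{t-1}$ and gluing them along a common clique (with one extra vertex per copy to raise the width from $t-1$ to $t$), producing a graph of simple treewidth exactly $t$. A pigeonhole argument on the ``top-colour signatures'' that any $2$-ranking induces on each copy's boundary clique, combined with the inductive lower bound applied inside a copy, forces one additional iterated $\log$ in the denominator.

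The hardest part will be making both inductions land on a genuinely simple-treewidth-$(t-1)$ subproblem at each step. \emph{Simple} treewidth is strictly more restrictive than ordinary treewidth and is not automatically preserved under taking separator neighbourhoods or under arbitrary clique-gluing, so I anticipate needing a careful choice of separator---perhaps one containing a simplicial vertex at the decomposition tree's boundary---in the upper bound, and a careful choice of gluing topology---perhaps a path of cliques rather than a tree of cliques---in the lower bound. Handling general $\ell\ge 2$ should only introduce an $\ell$-dependent multiplicative constant via a small buffer between super-levels that prevents length-$\ell$ paths from straddling them in a way that violates the ranking condition.
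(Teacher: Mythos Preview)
There is a genuine gap in your upper-bound plan, foreshadowed by a factual slip: graphs of simple treewidth at most $1$ are disjoint unions of \emph{paths}, not forests (\cref{simple-small-cases}(i)); trees in general already have simple treewidth~$2$. More importantly, your mechanism for producing a simple-treewidth-$(t-1)$ subproblem---``one slot of the bag is effectively frozen'' across a super-level of separators---does not work as stated: the union of several width-$t$ bags, even after deleting a common vertex, can still have simple treewidth~$t$, and you give no argument otherwise. The paper drops $t$ by one via a completely different route: it takes a BFS layering of an edge-maximal supergraph of $H$ from the root bag and uses the structural fact that each individual BFS layer has simple treewidth at most $t-1$ (\cref{simple-bfs-layers}). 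The argument then proceeds in blocks of $\ell+1$ consecutive layers; within a block it uses weighted separators to extract a small skeleton of each layer (\cref{path-induced,skeleton-colour}), applies the inductive hypothesis at $t-1$ to colour each layer's skeleton, and uses a delicate vertex-weighting scheme (\cref{t-tree-slack,t-tree-technical}) so that the colour counts telescope correctly across blocks. Your super-level idea has the right \emph{arithmetic} intuition, but without an analogue of \cref{simple-bfs-layers} you have no way to make the induction land on a genuine simple-treewidth-$(t-1)$ instance.

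For the lower bound, your clique-gluing-plus-pigeonhole sketch is in the right spirit but vaguer than the paper's construction, which is a tree of apex vertices each dominating many disjoint copies of the inductively-built graph (\cref{apex-graph,boost}); the apex lemma forces the colour of each apex to exceed by $h$ the smallest colour used below it, and iterating this over $m$ levels gives $\trn\ge hm+1$. The paper also sidesteps the difficulty you correctly anticipate about controlling \emph{simple} treewidth under gluing: it proves the lower bound for ordinary treewidth~$t-1$ (\cref{treewidth-lower-bound}) and then invokes $\stw(G)\le\tw(G)+1$ (\cref{simple-treewidth-vs-treewidth}) to conclude for simple treewidth~$t$.
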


The lower bound in \cref{simple-t-trees} immediately implies the lower bound in \cref{planar} because a graph has simple treewidth at most 3 if and only if it is planar and has treewidth at most 3. Therefore, the lower bound in \cref{simple-t-trees} shows the existence of $n$-vertex planar graphs $H$ with $\trn(H)\in\Omega(\log n/\log^{(3)} n)$.

To obtain the upper bound in \cref{planar}, we apply the upper bound in \cref{simple-t-trees} to the graph $H$ that appears in the product structure theorem along with a simple lemma which shows that, for any two graphs $G_1$ and $G_2$, $\lrn(G_1\boxtimes G_2)\le \lrn(G_1)\cdot\dlcn(G_2)$ where $\dlcn(G_2)$ is the \defin{distance-$\ell$ colouring number} of $G_2$;  the minimum number of colours needed to colour $G_2$ so that the endpoints of each non-trivial path of length at most $\ell$ have different colours.  It is easy to see that $\dlcn(K_3\times P)\le 3(\ell+1)$, so $\lrn(H\boxtimes K_3\boxtimes P)\le 3(\ell+1)\cdot\lrn(H)$.

Every graph of treewidth at most $t$ has simple treewidth at most $t+1$. Therefore, the upper bound in \cref{simple-t-trees} implies the (upper bound in the) following generalization of \cref{trees}:

\begin{thm}\label{t-trees}
    For any fixed integers $\ell\ge 2$, $t\ge 0$, every $n$-vertex graph $H$ of treewidth at most $t$ has $\lrn(H) \in O(\log n/\log^{(t+1)} n)$ and this is asymptotically optimal: for any fixed integer $t\ge 0$ and infinitely many values of $n$, there exists an $n$-vertex graph $H$ of treewidth $t$ with $\trn(H)\in\Omega(\log n/\log^{(t+1)} n)$.
\end{thm}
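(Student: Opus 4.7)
The plan is as follows. For the upper bound, I will apply \cref{simple-t-trees} directly, invoking the fact (asserted just before the theorem) that every graph of treewidth at most $t$ has simple treewidth at most $t+1$. To see this fact quickly: given a tree decomposition of $H$ of width $t$, I add a distinct dummy vertex to each bag so that no $(t+1)$-clique is shared between bags; the result is a simple tree decomposition of width $t+1$. Applying \cref{simple-t-trees} with parameter $t+1$ then yields $\lrn(H)\le O(\log n / \log^{(t+1)} n)$.

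For the lower bound, I will construct, by induction on $t$, an infinite family of $n$-vertex graphs $G_t$ of treewidth exactly $t$ with $\trn(G_t)\ge \Omega(\log n/\log^{(t+1)}n)$. The base case $t=1$ is \cref{trees} (the \citet{karpas.neiman.ea:on} tree lower bound). For the inductive step, given an $m$-vertex graph $G_{t-1}$ of treewidth $t-1$ with $\trn(G_{t-1})\ge c\log m/\log^{(t)}m$, I form $G_t$ by taking $N$ disjoint copies of $G_{t-1}$, identifying a designated $(t-1)$-clique of each copy with a single common $(t-1)$-clique $C$, and attaching one apex vertex $v$ adjacent to all of $C$. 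The graph $G_t$ has $n\approx Nm$ vertices and treewidth $t$: each copy's width-$(t-1)$ decomposition extends by adding $v$ to every bag containing $C$ and gluing all these decompositions at a bag containing $C\cup\{v\}$, producing a width-$t$ decomposition. Any 2-ranking of $G_t$ with $k$ colours induces at most $k^{t}$ distinct colour patterns on the $t$-clique $C\cup\{v\}$, so when $N\gg k^{t}$ pigeonhole forces many copies to share the same boundary pattern. The 2-ranking constraints on length-2 paths through $v$ between the interiors of such copies, combined with the inductive lower bound applied to each copy, force $k$ to exceed $\Omega(\log n/\log^{(t+1)}n)$, picking up one additional iterated logarithm.

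The hard part will be the quantitative amplification calculation: bounding the number of 2-rankings of $G_{t-1}$ compatible with a fixed colouring of its boundary $C$, and choosing $N$ and $m$ as iterated exponentials of $k$ so that multiplying this count by the $k^{t}$ pattern count is strictly less than $N$, thereby introducing exactly one extra $\log$ per inductive step. This calculation mirrors the amplification in the lower-bound proof of \cref{simple-t-trees}, but is tailored so that the resulting graph has treewidth $t$ rather than $t+1$: attaching a single apex $v$ to the $(t-1)$-clique $C$ raises the treewidth by just one, whereas gluing along a full $t$-clique would overshoot. A secondary concern is verifying that the 2-ranking constraints on the (length-2) paths traversing $v$ between copies are strong enough to prevent two copies from reusing the same colouring pattern on their interiors, which is what drives the inductive step.
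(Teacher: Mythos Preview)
Your upper bound is fine and is exactly what the paper does: invoke $\stw(H)\le\tw(H)+1$ (\cref{simple-treewidth-vs-treewidth}) and apply \cref{simple-t-trees} with parameter $t+1$.

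Your lower bound construction, however, does not work, for several independent reasons.

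First, the treewidth does not go up. Gluing $N$ copies of a treewidth-$(t-1)$ graph along a common $(t-1)$-clique $C$ is a clique-sum and keeps the treewidth at $t-1$. Adding a vertex $v$ adjacent only to $C$ can be accommodated by a single new bag $C\cup\{v\}$ of size $t$, i.e.\ width $t-1$. So your $G_t$ has treewidth $t-1$, not $t$; the sentence ``attaching a single apex $v$ to the $(t-1)$-clique $C$ raises the treewidth by just one'' is false.

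Second, even setting treewidth aside, the $2$-ranking number does not amplify. Take any $2$-ranking of $G_{t-1}$ with $k$ colours, use it identically on every copy, and then recolour the $t$ shared vertices $C\cup\{v\}$ with the fresh colours $k+1,\ldots,k+t$. Every new length-$2$ path between copies passes through $C$, whose colours now dominate everything, so the result is a valid $2$-ranking of $G_t$ using $k+t$ colours. Thus $\trn(G_t)\le\trn(G_{t-1})+t$, and no iterated-log gain is possible.

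Third, your pigeonhole is vacuous: $C\cup\{v\}$ is a \emph{single} shared set of vertices, so there is exactly one ``colour pattern'' on it, not $k^t$. And your proposed mechanism---``length-$2$ paths through $v$ between the interiors of copies''---does not exist, because $v$ is adjacent only to $C$; every length-$2$ path through $v$ has both endpoints in $C$.

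The paper's construction is different in the crucial respect. It makes the apex $a$ adjacent to \emph{every} vertex of $hm+1$ disjoint copies of a treewidth-$(t-1)$ graph $U$ with $\trn(U)\ge h$ (this is what raises the treewidth by one), so that any two same-coloured vertices in different copies form a length-$2$ path through $a$, forcing $\varphi(a)$ above them. Pigeonhole on the maximum colour in each copy then gives $\varphi(a)\ge k_0+h$ (\cref{apex-graph}). This step is iterated $m$ times in a layered ``boost'' $U^{(h,m)}$, yielding $\trn(U^{(h,m)})\ge hm+1$; choosing $h\approx\log r$ and $m\approx r/\log r$ and tracking the size gives the extra iterated logarithm.
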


The lower bound in \cref{t-trees} is through a construction of a treewidth-$t$ graph $H$ with $\trn(H)\in\Omega(\log n/\log^{(t+1)} n)$.  Again, since any graph of treewidth at most $t-1$ has simple treewidth at most $t$, the lower bound in \cref{t-trees} implies the lower bound in \cref{simple-t-trees}.

In addition to planar graphs, there are product structure theorems for a number of other graph classes, including bounded genus graphs, apex minor-free graphs, and $k$-planar graphs.  Using product structure theorems for these graph classes along with \cref{simple-t-trees,t-trees}, we obtain the following two results:

\begin{thm}\label{bounded-genus}
    For any fixed integer $\ell\ge 2$ and any integer $g\ge 0$, every $n$-vertex graph $G$ of Euler genus at most $g$ has $\lrn(G)\in O(g\log n/\log^{(3)} n)$.
\end{thm}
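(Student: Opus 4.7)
The plan is to mirror the proof of \cref{planar}, replacing the planar product structure theorem with its bounded-genus counterpart.  The product structure theorem of \citet{dujmovic.joret.ea:planar} states that every graph $G$ of Euler genus at most $g$ is a subgraph of $H \boxtimes P \boxtimes K_{\max\{2g,3\}}$, where $H$ is a planar graph of treewidth at most $3$ and $P$ is a path.  Since $H$ is planar and has treewidth at most $3$, $H$ also has simple treewidth at most $3$.

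I would then apply the strong-product lemma $\lrn(G_1 \boxtimes G_2) \le \lrn(G_1) \cdot \dlcn(G_2)$ already invoked in the sketch of \cref{planar}, with $G_1 = H$ and $G_2 = P \boxtimes K_{\max\{2g,3\}}$, to obtain
\[
    \lrn(G) \;\le\; \lrn(H)\cdot \dlcn\bigl(P \boxtimes K_{\max\{2g,3\}}\bigr).
\]
For the first factor, \cref{simple-t-trees} applied with $t=3$ yields $\lrn(H) \in O(\log n/\log^{(3)} n)$.  For the second factor, I would observe that any ball of graph-distance at most $\ell$ in $P \boxtimes K_{\max\{2g,3\}}$ contains at most $(2\ell+1)\max\{2g,3\}$ vertices, so a straightforward first-fit argument gives $\dlcn\bigl(P \boxtimes K_{\max\{2g,3\}}\bigr) \in O(g\ell)$.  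Multiplying the two bounds and absorbing the fixed $\ell$ into the hidden constant yields $\lrn(G) \in O(g\log n/\log^{(3)} n)$, as required.

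The argument is essentially a template instantiation and presents no deep obstacle; the one point that deserves care is verifying that the graph $H$ furnished by the bounded-genus product structure theorem can be taken to be planar with treewidth at most $3$, so that the stronger \cref{simple-t-trees} applies rather than the weaker \cref{t-trees}, which would produce $\log^{(4)} n$ in the denominator.  Once this is confirmed, all of the $g$-dependence is localized in the $\dlcn$ factor, which is precisely what produces the linear $g$ in the final bound.
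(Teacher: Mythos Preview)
Your approach matches the paper's: apply the bounded-genus product structure theorem, bound $\lrn(H)$ via \cref{simple-t-trees} with $t=3$, bound $\dlcn(K_{\max\{2g,3\}}\boxtimes P)$ by $\max\{2g,3\}(\ell+1)\in O(g\ell)$, and combine via \cref{product-lemma}.

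The one point you flagged is exactly where the paper differs from your outline. In the version of the product structure theorem the paper cites (\cref{product-structure-genus}), $H$ is \emph{not} guaranteed to be planar of treewidth $\le 3$; it is an apex graph with a distinguished vertex $v_0$ such that $\stw(H-\{v_0\})\le 3$. The paper therefore $\ell$-ranks $H-\{v_0\}$ using \cref{simple-t-trees} (obtaining $k\in O(\log n/\log^{(3)} n)$ colours) and then sets $\varphi(v_0):=k+1$, which yields an $\ell$-ranking of $H$ itself with $\lrn(H)\le k+1$. After this one-line fix, the rest of your argument goes through verbatim. So your instinct that this was the only delicate step was correct; you just need the apex-removal trick rather than a direct citation giving planar $H$.
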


\begin{thm}\label{meta-theorem}\label{meta}
    For each of the following graph classes $\mathcal{G}$:
    \begin{compactenum}
        \item the class of graphs excluding a particular apex graph $A$ as a minor; and
        \item the class of graphs that can be drawn in a surface of genus $g$ with at most $k$ crossings per edge,
    \end{compactenum}
    there exists an integer $c=c(\mathcal{G})$ such that, for any fixed integer $\ell\ge 2$, every $n$-vertex graph $G\in\mathcal{G}$ has $\lrn(G)\in O(\log n/\log^{(c)} n)$.
\end{thm}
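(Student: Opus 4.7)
The plan is to combine product structure theorems for each of the two classes with \cref{t-trees} and the strong-product lemma $\lrn(G_1\boxtimes G_2)\le \lrn(G_1)\cdot\dlcn(G_2)$ already invoked in the discussion after \cref{simple-t-trees}. In both cases the strategy is the same: write the given graph $G$ as a subgraph of $H\boxtimes Q$ where $H$ has bounded treewidth and $Q$ is a ``simple'' factor with bounded distance-$\ell$ chromatic number. The bound on $\lrn(H)$ then comes from \cref{t-trees}, and the constant $c$ is (one more than) the treewidth bound delivered by the product structure theorem.

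For the apex minor-free case, I would apply the product structure theorem of Dujmović, Esperet, Joret, Morin, Walczak and Wood, which asserts that for every apex graph $A$ there is a constant $t=t(A)$ such that every $A$-minor-free graph $G$ is a subgraph of $H\boxtimes P$, where $\tw(H)\le t$ and $P$ is a path. Since $\dlcn(P)\le \ell+1$ (colour the $i$-th vertex with $i\bmod(\ell+1)$), the product lemma gives $\lrn(G)\le(\ell+1)\lrn(H)$, and \cref{t-trees} bounds $\lrn(H)\in O(\log n/\log^{(t+1)}n)$ for any fixed $\ell$. Setting $c=t+1$ proves the first item.

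For the second item, I would invoke the analogous product structure theorem for $(g,k)$-planar graphs (Dujmović, Morin, and Wood): there exist $t=t(g,k)$ and $m=m(g,k)$ such that every such graph $G$ is a subgraph of $H\boxtimes P\boxtimes K_m$ with $\tw(H)\le t$. A routine observation shows $\dlcn(P\boxtimes K_m)\le m(\ell+1)$ (take the product of the colouring of $P$ above with any proper colouring of $K_m$). Applying the product lemma, writing $G'=P\boxtimes K_m$, gives $\lrn(G)\le m(\ell+1)\cdot \lrn(H)\in O(\log n/\log^{(t+1)}n)$ for fixed $\ell$, $g$, and $k$. Again $c=t+1$ suffices.

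I do not expect any genuine obstacle: both steps are mechanical once the right product structure theorem is cited, and the only verification required is the trivial upper bound on $\dlcn$ of the small factor. If there is any friction, it will be purely bookkeeping, namely tracking the dependence of the implied constant on $\ell$, $g$, and $k$, which is hidden in the $O(\cdot)$ since $\ell$, $g$, and $k$ are fixed. The constant $c$ appearing in the statement is then exactly one more than the treewidth bound produced by the product structure theorem for $\mathcal{G}$.
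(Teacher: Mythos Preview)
Your proposal is correct and follows essentially the same route as the paper: invoke the product structure theorems for apex-minor-free and $(g,k)$-planar graphs, apply \cref{product-lemma} with the bounded $\dlcn$ of the path factor, and finish with \cref{t-trees}, taking $c=t+1$. The only cosmetic difference is that the paper quotes the $(g,k)$-planar product structure theorem in the form $G\subseteq H\boxtimes P$ (with the clique factor already absorbed into $H$), whereas you keep the $K_m$ factor explicit; either formulation yields the same argument.
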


\subsection{Related Work and Relation to Other Colouring Numbers}

Here we survey previous work on $\ell$-ranking as well as its relations to other graph colouring numbers.

\subsubsection{Vertex Ranking}

For a graph $G$, an $\infty$-ranking is known as a \defin{vertex ranking} \cite{bodlaender.deogun.ea:rankings} or \defin{ordered colouring} of $G$ \cite{katchalski.mccuaig.ea:ordered}.  For any graph $G$, $\irn(G)$ is equal to the \defin{treedepth} $\td(G)$ of $G$, defined by \citet{nesetril.ossona:tree-depth} and which plays a central role in the theory of sparsity \cite{nesetril.ossona:on,nesetril.ossona:sparsity}.  Both of these notions are equal to the minimum clique number of a trivially perfect supergraph of $G$ \cite{nesetril.ossona:tree-depth}.

Finding a vertex ranking $\varphi$ that uses exactly $\irn(G)$ colours is equivalent to finding a minimum-height elimination tree of $G$ \cite{torre.greenlaw.ea:optimal,deogun.kloks.ea:on}.  This measure has applications to parallel Cholesky factorization of matrices \cite{bodlaender.gilbert.ea:approximating,duff.reid:multifrontal,liu:role,dereniowski.kubale:cholesky} and in VLSI layout \cite{leiserson:area,sen.deng.ea:on}.  More recently, \citet{even.smorodinsky:hitting} showed that $\irn(G)$ determines the competitive ratio of the best algorithm for the online hitting set problem in $G$.

The \defin{vertex ranking problem} of determining $\irn(G)$ for an arbitrary graph $G$ is known to be NP-hard, even on some restricted classes of graphs \cite{bodlaender.deogun.ea:rankings,llewellyn.tovey.ea:local,llewellyn.tovey.ea:erratum,dereniowski.nadolski:vertex}. Polynomial-time algorithms for the vertex ranking problem have been found for several families of graphs: \citet{schaeffer:optimal,iyer.ratliff.ea:optimal} showed this for trees and \citet{deogun.kloks.ea:on} showed this for permutation graphs.

A straightforward application of divide-and-conquer using planar separators shows that, for any $n$-vertex planar graph $G$, $\irn(G) \in O(\sqrt{n})$ \cite{llewellyn.tovey.ea:local,katchalski.mccuaig.ea:ordered}, and this bound is optimal:  For the $\sqrt{n}\times\sqrt{n}$ grid, $\irn(G)\in \Omega(\sqrt{n})$ \cite{katchalski.mccuaig.ea:ordered}.  A lower bound of \citet{katchalski.mccuaig.ea:ordered} shows that upper bounds like this, using divide-and-conquer with separators, are essentially tight: If, for every $r$-element set $S\subseteq V(G)$, the graph $G-S$ has a component of size at least $\alpha n$, then $\irn(G) \in\Omega(\alpha r)$. In a similar vein, \citet{bodlaender.gilbert.ea:approximating,kloks:treewidth} show that $\irn(G)$ is lower bounded by 1 plus the pathwidth of $G$.

It is not hard to see that, even for an $n$-vertex path $P$, $\irn(P)\in\Omega(\log n)$ and, in fact $\irn(P)=\ceil{\log_2(n+1)}$ \cite{nesetril.ossona:tree-depth}.  The same separator argument, applied carefully to treewidth-$t$ graphs shows that every $n$-vertex treewidth-$t$ graph $G$ has $\irn(G)\le (t+1)\log_2 n$ \cite{nesetril.ossona:tree-depth}.  This shows that, even for graphs with constant-size separators, (worst-case asymptotically) optimal bounds are obtained by divide-and-conquer using separators.  More references on vertex ranking are available in Section 7.19 of the dynamic survey by \citet{gallian:dynamic}.

\subsubsection{$2$-Ranking}

At least three works have considered $\lrn$ for finite $\ell$ with a focus on the case $\ell=2$.  These results are summarized in \cref{summary-table}.

\begin{table}
    \centering{
        \begin{tabular}{|l|c|c|l|} \hline
            Graph class & Upper Bound & Lower Bound & Ref. \\ \hline
            Trees & $O(\log n/\log\log n)$ & $\Omega(\log n/\log\log n)$ & \cite{karpas.neiman.ea:on} \\
            Planar graphs & $O(\ell\log n)$ & $\Omega(\log n/\log\log n)$ & \cite{karpas.neiman.ea:on} \\
            Proper minor closed & $O(\ell\log n)$ & $\Omega(\log n/\log\log n)$ & \cite{karpas.neiman.ea:on} \\
            $d$-cubes & $d+1$ & $d+1$ & \cite{almeter.demircan.ea:graph} \\
            Max-degree 3 & $7$ & & \cite{almeter.demircan.ea:graph} \\
            Max-degree $\Delta$ & $O(\min\{\Delta^2,\Delta\sqrt{n}\})$ & $\Omega(\Delta^2/\log \Delta)$ & \cite{karpas.neiman.ea:on,almeter.demircan.ea:graph} \\
            $d$-degenerate & $O(d\sqrt{n})$ & $\Omega(n^{1/3} + d^2/\log d)$ & \cite{karpas.neiman.ea:on,almeter.demircan.ea:graph} \\
            \hline \multicolumn{4}{c}{} \\
            \hline
            Simple treewidth $\le t$ & $O(\log n/\log^{(t)} n)$ & $\Omega(\log n/\log^{(t)} n)$ & \cref{simple-t-trees} \\
            Treewidth $\le t$ & $O(\log n/\log^{(t+1)} n)$ & $\Omega(\log n/\log^{(t+1)} n)$ & \cref{t-trees} \\
            Planar graphs & $O(\log n/\log^{(3)} n)$ & $\Omega(\log n/\log^{(3)} n)$ & \cref{planar,t-trees} \\
            Outerplanar graphs & $O(\log n/\log^{(2)} n)$ & $\Omega(\log n/\log^{(2)} n)$ & \cref{t-trees}, \cite{karpas.neiman.ea:on} \\
            Genus-$g$ graphs & $O(g\log n/\log^{(3)} n)$ & $\Omega(\log n/\log^{(3)} n)$ & \cref{bounded-genus,t-trees} \\
            $A$-minor-free (apex $A$) & $O(\log n/\log^{(c(A))} n)$ & $\Uparrow$ & \cref{meta} \\
            $(g,k)$-planar & $O(\log n/\log^{(c(g,k))} n)$ & $\Uparrow$  & \cref{meta} \\
            \hline
        \end{tabular}
    } 
    \caption{Summary of previous and new results on $\trn$.  All new upper bounds hold for any constant $\ell$. An up-arrow ($\Uparrow$) indicates a lower bound that is implied by the lower bound in the cell directly above. All new lower bounds hold for $\ell=2$. Prior upper bounds hold only for $\ell=2$, with the exception of the $O(\ell\log n)$ upper bound for planar graphs.}
\label{summary-table}
\end{table}

\citet{karpas.neiman.ea:on} proved \cref{trees}---a tight bound of $\trn(T)\in O(\log n/\log\log n)$ for every $n$-vertex tree $T$---and \cref{planar-graphs}---the upper bound $\lrn(G)\in O(\ell\log n)$ for every $n$-vertex planar graph $G$ and every integer $\ell\ge 2$.  More generally, the same authors show that, for any fixed proper minor-closed family $\mathcal{G}$ of graphs $\lrn(G)\in O(\ell\log n)$ for every positive integer $\ell$ and every $n$-vertex $G\in\mathcal{G}$.  They also show that, for fixed $d$, every $n$-vertex $d$-degenerate graph $G$ has $\trn(G)\in O(\sqrt{n})$ and there exists examples with $\trn(G)\in\Omega(n^{1/3})$.

\citet{shalu.antony:complexity} show that determining the minimum number of colours required by a 2-ranking of a given graph is NP-hard, even when restricted to planar bipartite graphs.  \citet{almeter.demircan.ea:graph} determine the exact value of $\trn(Q_d)=d+1$ where $Q_d$ is the $d$-cube.  They also show that, for graphs $G$ of maximum degree 3, $\trn(G)\le 7$ and show the existence of a graph with maximum degree $k$ such that $\trn(G)\in\Omega(k^2/\log k)$.

\subsubsection{Star Colouring and Distance-$2$ colouring}

 Note that 2-rankings fall between two very well-studied graph colouring problems:
\begin{compactitem}
    \item \defin{star colourings}, which ensure that the graph induced by any 2 colour classes is a forest of stars and
    \item \defin{distance-2 colourings} which ensure that the endpoints of each non-trivial path of length at most 2 receive distinct colours.
\end{compactitem}
Every $2$-ranking is a star colouring and every distance-2 colouring is a 2-ranking so, letting $\scn(G)$ and $\dtcn(G)$ denote the star colouring number of $G$ and distance-2 colouring number of $G$, respectively, we have $\scn(G) \le \trn(G)\le \dtcn(G)$.

\subsubsection{Centered Colouring}



A colouring $\varphi:V(G)\to\N$ is \defin{$p$-centered} if each connected subgraph $X\subseteq G$ that is coloured with at most $p$ distinct colours has a colour that occurs exactly once \cite{nesetril.ossona:tree-depth,nesetril.ossona:grad,zhu:colouring}.  This implies that in a $p$-centered colouring, every connected subgraph with at most $2p$ vertices must have a colour that occurs exactly once.  In particular, every path of length at most $2p-1$ must have a colour that occurs exactly once.  On the other hand \cref{equivalence} in \cref{basics} shows that $\varphi$ is a $(2p-1)$-ranking if and only if every path of length at most $(2p-1)$ has a unique maximum colour.

This example shows how the difference between ``unique'' and ``unique maximum'' can be surprisingly profound.
Planar graphs (and, indeed, all graph families having similar product structure theorems) have $2p-1$-centered colourings using a number of colours that depends only (polynomially) on $p$ \cite{nesetril.ossona:grad,debski.felsner.ea:improved,pilipczuk.siebertz:polynomial}.  This contrasts starkly with the lower bounds in \cref{trees,planar}, which show that $\ell$-rankings of $n$-vertex trees and planar graphs, respectively, require a number of colours that depends nearly-logarithmically on $n$, for any $\ell\ge 2$.

\subsection{Outline}

The remainder of this paper is organized as follows: \Cref{basics} reviews some basic tools used in the following sections.   \Cref{lower-bounds} proves the lower bound in \cref{t-trees}, which immediately implies the lower bounds in \cref{planar,simple-t-trees}. \Cref{upper-bounds} proves the upper bound in \cref{simple-t-trees}, from which the upper bounds in \cref{planar,t-trees,bounded-genus,meta} follow easily.  \Cref{conclusion} gives a brief summary and discusses directions for further work.

\section{Preliminaries}
\label{basics}

In this paper we use standard graph theory terminology as used in the book by \citet{diestel:graph}.  Every graph $G$ we consider is finite, simple, and undirected with vertex set denoted by $V(G)$ and edge set denoted by $E(G)$.  We use the shorthand $|G|:=|V(G)|$ to denote the number of vertices in $G$.  We use $N_G(v):=\{w\in V(G): vw\in E(G)\}$ to denote the \defin{open neighbourhood} of $v$ in $G$. For any $S\subseteq V(G)$, $N_G(S):=\bigcup_{v\in S} N_G(v)\setminus S$.  For each $n\in\N$, $K_n$ denotes the complete graph on $n$ vertices.  The \defin{length} of a path $u_0,\ldots,u_p$ in $G$ is equal to the number, $p$, of edges in the path. A path is \defin{trivial} if it has length 0 and \defin{non-trivial} otherwise.

For any set $S$, $G[S]$ is the graph with vertex set $V(G[S]):=V(G)\cap S$ and edge set $E(G[S]):=\{vw\in E(G): \{v,w\}\subseteq S\}$, and $G-S:=G[V(G)\setminus S]$.  We say that a subgraph $G'$ of $G$ is an \defin{induced} subgraph of $G$ if $G[V(G')]=G'$.  Although $\ell$-ranking is defined in terms of subgraphs of diameter at most $\ell$, it is more convenient to use an equivalent definition based on (induced) paths:\footnote{Definition (c) is, in fact, the definition used by \citet{karpas.neiman.ea:on}. We only provide a proof of equivalence here for the sake of completeness.}

\begin{obs}\label{equivalence}
  For any graph $G$, any vertex colouring $\varphi:V(G)\to\N$, and any $\ell\in\N\setminus\{0\}$ the following statements are equivalent:
  \begin{compactenum}[(a)]
    \item $\varphi$ is an $\ell$-ranking of $G$.
    \item For every non-trivial path $u_0,\ldots,u_p$ in $G$ of length at most $\ell$, there is exactly one $i\in\{0,\ldots,p\}$ such that $\varphi(u_i)=\max\{\varphi(u_j):j\in\{0,\ldots,p\}\}$.
    \item For every non-trivial path $u_0,\ldots,u_p$ in $G$ of length at most $\ell$,
    \begin{inparaenum}[(i)]
      \item $\varphi(u_0)\neq\varphi(u_p)$; or \item $\varphi(u_0)<\max\{\varphi(u_0),\ldots,\varphi(u_p)\}\}$.
    \end{inparaenum}
  \end{compactenum}
\end{obs}

\begin{proof}
  That $(a)\Rightarrow (b)$ follows immediately from the fact that every path in $G$ of length at most $\ell$ is a connected subgraph of $G$ of diameter at most $\ell$.  To see that $(b)\Rightarrow(c)$ observe that, if $\varphi(u_0)=\varphi(u_p)$ then $\varphi(u_0)\neq\max\{\varphi(u_j):j\in\{0,\ldots,p\}\}$, so $\varphi(u_0)<\max\{\varphi(u_j):j\in\{0,\ldots,p\}\}$.

  To see that $(c)\Rightarrow (a)$ we prove the contrapositive $\neg(a)\Rightarrow\neg(c)$. Suppose that $\varphi$ is not an $\ell$-ranking of $G$. Then $G$ contains a subgraph $X$ with $\diam(X)\le\ell$ that has two vertices $v,w\in V(X)$ such that $\varphi(v)=\varphi(w)=\max\{\varphi(u):u\in V(X)\}$.  Then let $u_0,\ldots,u_p$ be a shortest path in $X$ from $u_0:=v$ to $u_p:=w$.  This path has length $p\le\diam(X)\le\ell$, $\varphi(u_0)=\varphi(u_p)$ and $\varphi(u_0)\not<\max\{\varphi(u_j):j\in\{0,\ldots,p\}\}$.
\end{proof}

\begin{obs}\label{induced-paths-only}
    A colouring $\varphi:V(G)\to\N$ of a graph $G$ is an $\ell$-ranking of $G$ if and only if, for every induced path $u_0,\ldots,u_p$ in $G$ of length at most $\ell$,
    \begin{inparaenum}[(i)]
        \item $\varphi(u_0)\neq\varphi(u_p)$; or
        \item $\varphi(u_0)<\max\{\varphi(u_0),\ldots,\varphi(u_p)\}$.
    \end{inparaenum}
\end{obs}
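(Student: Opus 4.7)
The forward implication is immediate, since every induced path is in particular a path; so the content of the statement is the converse, and I plan to prove it by the contrapositive. Suppose $\varphi$ is not an $\ell$-ranking, witnessed by some non-trivial path $P=u_0,u_1,\ldots,u_p$ of length $p\le \ell$ on which both (i) and (ii) fail, i.e.\ $\varphi(u_0)=\varphi(u_p)$ and $\varphi(u_0)=\max\{\varphi(u_0),\ldots,\varphi(u_p)\}$. My goal is to manufacture an induced path of length at most $\ell$ on which (i) and (ii) also fail, which will contradict the hypothesis of the ``if'' direction.

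The natural candidate is to take a shortest $u_0$-$u_p$ path inside the subgraph $G[V(P)]$ induced by the vertex set of $P$, and to call this path $Q=v_0,\ldots,v_q$ with $v_0=u_0$ and $v_q=u_p$. Since $P$ itself is a $u_0$-$u_p$ walk in $G[V(P)]$, such a shortest path exists and has length $q\le p\le\ell$. Moreover $Q$ is non-trivial because $u_0\ne u_p$ (they are distinct endpoints of the non-trivial path $P$). The key observation is that $Q$ is an induced path of $G$: any chord of $Q$ in $G$ would join two vertices of $V(P)$, hence would also be present in $G[V(P)]$, contradicting minimality of $Q$.

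It then remains to verify that $\varphi$ violates both (i) and (ii) on $Q$. For (i), $\varphi(v_0)=\varphi(u_0)=\varphi(u_p)=\varphi(v_q)$. For (ii), because $V(Q)\subseteq V(P)$,
\[
 \max\{\varphi(v_0),\ldots,\varphi(v_q)\}\ \le\ \max\{\varphi(u_0),\ldots,\varphi(u_p)\}\ =\ \varphi(u_0)\ =\ \varphi(v_0),
\]
so $\varphi(v_0)$ attains the maximum along $Q$. Thus $Q$ is a non-trivial induced path of length at most $\ell$ witnessing that $\varphi$ fails the induced-path condition, completing the contrapositive.

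There is no real obstacle here; the only thing worth double-checking is that ``shortest path in $G[V(P)]$'' yields a path that is induced in $G$ itself (not merely induced in $G[V(P)]$), which follows because $V(P)$ already contains both endpoints of any potential chord. Everything else is bookkeeping.
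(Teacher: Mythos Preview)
Your proof is correct and follows essentially the same approach as the paper: both take a shortest $u_0$--$u_p$ path inside $G[\{u_0,\ldots,u_p\}]$, observe that it is induced in $G$, and check that the violation of (i) and (ii) persists since the vertex set only shrinks. Your write-up is in fact slightly more careful than the paper's in explicitly justifying why a shortest path in the induced subgraph is induced in $G$ itself.
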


\begin{proof}
    By \cref{equivalence}(c) any $\ell$-ranking $\varphi$ of $G$ satisfies (i) or (ii) for every path of length at most $\ell$, including every induced path of length at most $\ell$, so this direction is trivial.

    For the other direction, suppose $G$ contains a (not necessarily induced) path $u_0,\ldots,u_p$ of length $p\le\ell$ with $\varphi(u_0)=\varphi(u_p)$ and $\varphi(u_0)=\max\{\varphi(u_0),\ldots,\varphi(u_p)\}$.  Let $w_0,\ldots,w_s$ be the shortest path from $w_0:=u_0$ to $w_s:=u_p$ in  the graph $G[\{u_0,\ldots,u_p\}]$.  Then $w_0,\ldots,w_s$ is an induced path in $G$ with $\varphi(w_0)=\varphi(u_0)=\varphi(u_p)=\varphi(w_s)$ and, since $\{w_0,\ldots,w_{s}\}\subseteq\{u_0,\ldots,u_{r}\}$, $\max\{\varphi(w_0),\ldots,\varphi(w_{s})\}\le\max\{u_0,\ldots,u_{r}\}$, so $\varphi(w_0)=\varphi(u_0)=\max\{\varphi(w_0),\ldots,\varphi(w_s)\}$, as required.
\end{proof}

From this point on, we will use the characterization in \cref{induced-paths-only} as our definition of $\ell$-ranking.  Specifically, in order to prove that some colouring $\varphi:V(G)\to\N$ is an $\ell$-ranking of $G$ we need only show that for any induced path $u_0,\ldots,u_p$, $\varphi(u_0)=\varphi(u_p)$ and $p\le\ell$ implies that $\varphi(u_0)<\max\{\varphi(u_i):i\in\{0,\ldots,p\}\}$.

%
%

Let $T$ be a tree rooted at some node $r\in V(T)$.  For any node $x\in V(T)$, $P_T(x)$ denotes the path, in $T$, from $r$ to $x$.  The \defin{$T$-depth} of $x\in V(T)$, denoted by $d_T(x)$, is the length of $P_T(x)$.  The \defin{height} of $T$ is $\max\{d_T(x):x\in v(T)\}$.  A node $a\in V(T)$ is a \defin{$T$-ancestor} of $x\in V(T)$ if $a\in V(P_T(x))$. If $a$ is a $T$-ancestor of $x$ then $x$ is a \defin{$T$-descendant} of $a$.  Note that every node of $T$ is both a $T$-ancestor and $T$-descendant of itself.  If $a$ is a $T$-ancestor of $x$ and $x\neq a$ then $a$ is a \defin{strict} $T$-ancestor of $x$ and $x$ is a \defin{strict} $T$-descendant of $a$.  The strict ancestor relation induces a partial order $\prec_T$ on $V(T)$ in which $x\prec_T y$ if and only if $x$ is a strict $T$-ancestor of $y$.

For any graph $G$, and any two vertices $v,w\in V(G)$, $d_G(v,w)$ denotes the length of a shortest path, in $G$, from $v$ to $w$ or $d_G(v,w):=\infty$ if $v$ and $w$ are in different connected components of $G$. The \defin{diameter} of $G$ is $\diam(G):=\max\{d_G(v,w):v,w\in V(G)\}$. For any integer $k\ge 1$, the \defin{$k$-th power} of $G$, denoted by $G^k$, is the graph with vertex set $V(G^k):=V(G)$ and edge set $E(G^{k}):=\{vw:v,w\in V(G),\, 1\le d_G(v,w)\le k\}$.
Note that any distance-$\ell$ colouring of $G$ is a proper colouring of $G^\ell$ and vice-versa, i.e., $\dlcn(G)=\chi(G^\ell)$.

For any $v\in V(G)$ and any $W\subseteq V(G)$, let $d_G(v,W)=\min\{d_G(v,w):w\in W\}$. A \defin{(generalized) BFS layering} of a connected graph $G$ is a partition of $V(G)$ into a sequence $\mathcal{L}:=(L_0,\ldots,L_m)$ of sets such that, for each $i\in\{1,\ldots,m\}$ and each $v\in L_i$, $d_G(v,L_0)=i$.  Any BFS layering $\mathcal{L}:=(L_0,\ldots,L_m)$ defines a partial order $\prec_{\mathcal{L}}$ on $V(G)$ in which $v\prec_{\mathcal{L}} w$ if and only if $v\in L_i$, $w\in L_j$ and $i<j$.


\subsection{Graph Decompositions, Treewidth, and Pathwidth}

For two graphs $H$ and $X$, an \defin{$X$-decomposition} of $H$ is a sequence $\mathcal{X}:=(B_x:x\in V(X))$ of subsets of $V(H)$ called \defin{bags} indexed by the nodes of $X$ and such that
 \begin{inparaenum}[(i)]
     \item for each $v\in V(H)$, $X[\{x\in V(X):v\in B_x\}]$ is connected; and
     \item for each $vw\in E(H)$, there exists some $x\in V(X)$ such that $\{v,w\}\subseteq B_x$.
\end{inparaenum}
The \defin{width} of $\mathcal{X}$ is $\max\{|B_x|:x\in V(X)\}-1$. We say that $H$ is \defin{edge-maximal} with respect to $\mathcal{X}$ if, for each $x\in V(X)$, the vertices in $B_x$ form a clique in $H$.


In the special case where $X$ is a tree (or a forest), $\mathcal{X}$ is called a \defin{tree decomposition} of $H$.  In the more special case where $X$ is a path (or a collection of disjoint paths), $\mathcal{X}$ is called a \defin{path decomposition} of $H$. The \defin{treewidth} $\tw(H)$ of $H$ is the minimum width of any tree decomposition of $H$. The \defin{pathwidth} $\pw(H)$ of $H$ is the minimum width of any path decomposition of $H$.  If a graph $G$ is edge-maximal with respect to a path decomposition (tree decomposition) $\mathcal{X}$ of width $t$, then $G$ is an interval graph (chordal graph, respectively) whose maximum clique size is $t+1$ \cite{bodlaender:partial}.

For a graph $H$, a \defin{rooted tree decomposition} of $H$ is a tree decomposition $\mathcal{T}:=(B_x:x\in V(T))$ of $H$ in which $T$ is a rooted tree.  Throughout the remainder of the paper, all our tree decompositions are rooted, with the root of $T$ typically denoted by $r$, in which case we call it an \defin{$r$-rooted tree decomposition}.  We use the notation $x_\mathcal{T}(v)$ to denote the minimum $T$-depth node $x\in V(T)$ such that $v\in B_x$.  This induces a partial order $\prec_{\mathcal{T}}$ on $V(H)$ in which $v\prec_{\mathcal{T}} w$ if and only if $x_\mathcal{T}(v)\prec_T x_\mathcal{T}(w)$.  The following observations have straightforward proofs:

 \begin{obs}\label{induced-unimodal}
     Let $H$ be a graph that is edge-maximal with respect to some rooted tree decomposition $\mathcal{T}:=(B_x:x\in V(T))$ of $H$.  Then, for any induced path $u_0,\ldots,u_p$ in $H$ and any $i\in\{1,\ldots,p-1\}$, $u_i\preceq_\mathcal{T} u_0$ or $u_i\preceq_\mathcal{T} u_p$.
 \end{obs}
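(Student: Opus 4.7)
My plan is to argue by contradiction, assuming the conclusion fails and extracting a chord of the induced path from the edge-maximality of $H$. Write $b_j := x_T(u_j)$ and $d_j := d_T(b_j)$, and note up front that consecutive apices on the path are $\preceq_T$-comparable: the edge $u_{j-1}u_j$ sits in some bag $B_y$, so both $b_{j-1}$ and $b_j$ are $T$-ancestors of $y$, hence comparable.

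The key structural claim I intend to establish is that no internal $j \in \{1,\ldots,p-1\}$ satisfies $d_{j-1} \le d_j$ and $d_{j+1} \le d_j$. Supposing some such $j$ exists, comparability combined with the depth inequality gives $b_{j-1} \preceq_T b_j$; the set of bags containing $u_{j-1}$ is a subtree of $T$ with apex $b_{j-1}$ that contains $y$, so it must contain the entire $T$-path from $b_{j-1}$ to $y$, in particular $b_j$. Hence $u_{j-1} \in B_{b_j}$, and symmetrically $u_{j+1} \in B_{b_j}$. Edge-maximality of $H$ makes $B_{b_j}$ a clique, so $u_{j-1}u_{j+1}$ is an edge of $H$---a chord of the induced path $u_0,\ldots,u_p$ and a contradiction.

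Granted the absence of internal local maxima, $(d_j)_{j=0}^{p}$ is $V$-shaped: some $m$ satisfies $d_0 \ge \cdots \ge d_m \le \cdots \le d_p$. Iterating comparability along the non-increasing prefix yields $b_i \preceq_T b_0$ for every $i \le m$ and, symmetrically, $b_i \preceq_T b_p$ for every $i \ge m$, which translates to $u_i \preceq_\mathcal{T} u_0$ or $u_i \preceq_\mathcal{T} u_p$.

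The step I expect to be most delicate is the borderline equality $b_i = b_0$ (or $b_i = b_p$) arising in the last paragraph, where weak $T$-ancestry does not quite yield $\prec_\mathcal{T}$. I would dispatch it by the same chord principle: equality with $i \ge 2$ puts $u_0$ and $u_i$ in the common clique $B_{b_0}$, making $u_0u_i$ a chord; the symmetric case is analogous; and the residual case $i=1,\, b_1=b_0$ is resolved by noting that $b_1=b_p$ would similarly force the chord $u_0u_p$ (using $p\ge 2$), so $b_1 \prec_T b_p$ and therefore $u_1 \prec_\mathcal{T} u_p$.
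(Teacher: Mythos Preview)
Your argument is correct. The paper does not actually supply a proof of this observation---it is listed among results said to have ``straightforward proofs''---so there is nothing to compare your approach against. Your chord-based contradiction (an internal weak local maximum of $(d_j)$ forces $u_{j-1}u_{j+1}\in E(H)$, a chord of the induced path) is exactly the standard reasoning one would expect here, and the deduction of the $V$-shape from the absence of such maxima is sound.

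One remark on the ``delicate'' borderline you flag. The paper defines $v\prec_{\mathcal{T}}w$ to mean $x_T(v)\prec_T x_T(w)$; the natural reading of $v\preceq_{\mathcal{T}}w$ in this context is $x_T(v)\preceq_T x_T(w)$ rather than the bare reflexive closure on vertices. Under that reading, $b_i=b_0$ already yields $u_i\preceq_{\mathcal{T}}u_0$ with no further work. Your extra chord argument (showing $b_i=b_0$ with $i\ge 2$, or $b_0=b_1=b_p$, produces a chord) correctly handles the stricter interpretation too, so the proof is robust either way---but you could shorten the write-up by adopting the weaker reading and dispensing with that last paragraph.
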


\begin{obs}\label{order-relation}
    Let $H$ be a connected graph that is edge-maximal with respect to an $r$-rooted tree decomposition $\mathcal{T}:=(B_x:x\in V(T))$ of $H$ and let $\mathcal{L}:=L_0,\ldots,L_m$ be a BFS layering of $H$ with $L_0:=B_r$.  Then, for any $v,w\in V(H)$, $v\prec_{\mathcal{T}}w$ implies $v\preceq_{\mathcal{L}}w$. Equivalently, there is no pair $v,w\in V(H)$ such that $v\prec_{\mathcal{L}}w$ and $w\prec_{\mathcal{T}}v$.
\end{obs}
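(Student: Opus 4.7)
The plan is to prove $d_H(v, L_0) \le d_H(w, L_0)$, which immediately gives the desired conclusion $v \preceq_\mathcal{L} w$. If $x_T(v) = r$, this is trivial since then $v \in B_r = L_0$; so I would assume $x^* := x_T(v) \neq r$. By hypothesis, $x^*$ is a strict $T$-ancestor of $x_T(w)$.

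The main tool is the classical separator property of tree decompositions: for any $x^* \in V(T)$, the bag $B_{x^*}$ separates vertices whose bag-subtrees lie in different components of $T - x^*$. Since $x^*$ is a strict $T$-ancestor of $x_T(w)$, the root $r$ and the node $x_T(w)$ lie in different components of $T - x^*$. Setting $j := d_H(w, L_0)$, I would take a shortest path $w = q_0, q_1, \ldots, q_j$ in $H$ with $q_j \in B_r$. The separator property then forces some $q_m$ on this path to lie in $B_{x^*}$: either $q_j$ itself is in $B_{x^*}$, or the path crosses the separator somewhere en route from the $x_T(w)$-side to the $r$-side.

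Next I would observe that $m \ge 1$, i.e., that $w$ is not in $B_{x^*}$: if it were, then the subtree of bags containing $w$ would include $x^*$, forcing $x_T(w) \preceq_T x^*$ and contradicting $x^* \prec_T x_T(w)$. Since $v$ and $q_m$ both lie in $B_{x^*}$ and $H$ is edge-maximal with respect to $\mathcal{T}$, either $v = q_m$ or $vq_m \in E(H)$, giving
\[
d_H(v, L_0) \le 1 + (j - m) \le j,
\]
as required. The only mildly delicate point is recognising that the two hypotheses must be combined: the separator structure coming from $v \prec_\mathcal{T} w$ forces any $w$-to-$L_0$ path to meet $B_{x^*}$ at an intermediate vertex $q_m$, while edge-maximality lets us jump from that $q_m$ directly to $v$ along a single edge; once both facts are invoked the inequality drops out without further case analysis.
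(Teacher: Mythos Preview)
Your argument is correct. The paper itself does not supply a proof of this observation, merely noting that it ``has a straightforward proof''; your write-up is precisely the natural argument one would give, using the separator property of $B_{x_T(v)}$ together with edge-maximality to shortcut any shortest $w$--$L_0$ path into a $v$--$L_0$ walk of no greater length.
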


\begin{obs}\label{up-neighbours}
    Let $H$ be a connected graph that is edge-maximal with respect to a width-$t$ $r$-rooted tree decomposition $\mathcal{T}:=(B_x:x\in V(T))$ of $H$ and let $\mathcal{L}:=L_0,\ldots,L_m$ be a BFS layering of $H$ with $L_0:=B_r$. Then, for any $i\in\{1,\ldots,m\}$ and any component $X$ of $H[\bigcup_{j=i}^m L_j]$, $L_{i-1}\cap N_H(V(X))$ is contained in a single bag $B_x$ of $\mathcal{T}$.
\end{obs}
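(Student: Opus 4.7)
The plan is to pinpoint a single bag $B_{x_0}$ of $\mathcal{T}$ that contains every vertex of $L_{i-1}\cap N_H(V(X))$ and also at least one vertex of $V(X)$. Since $V(X)\subseteq\bigcup_{j\ge i}L_j$ is disjoint from $L_{i-1}$, such a bag witnesses $|L_{i-1}\cap N_H(V(X))|\le|B_{x_0}|-1\le t$, as required.

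First I would set $T_X:=\{x\in V(T):B_x\cap V(X)\neq\emptyset\}$. A routine argument shows that $T_X$ is a connected subtree of $T$: the set of bag-indices of any single vertex is a connected subtree of $T$, consecutive vertices on a path in $X$ share a bag, and $X$ is connected. Let $x_0$ be the node of $T_X$ of minimum $T$-depth, i.e., the one closest to $r$. The key claim is then that $L_{i-1}\cap N_H(V(X))\subseteq B_{x_0}$. Toward a contradiction, take $u\in L_{i-1}\cap N_H(V(X))$ with $u\notin B_{x_0}$. Since $u$ shares some bag with a neighbour in $V(X)$, the connected subtree $T_u:=\{x\in V(T):u\in B_x\}$ meets $T_X$ but avoids $x_0$, and so $T_u$ lies entirely in a single component $T'$ of $T-x_0$; by choice of $x_0$, $r\notin T'$ either.

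The heart of the argument is a shortest-path/separator step. By the standard separator property of tree decompositions, $B_{x_0}$ separates every vertex whose subtree of bag-indices lies in $T'$ from every vertex whose subtree of bag-indices avoids $T'$. Choose a shortest path $p_0,p_1,\ldots,p_{i-1}=u$ in $H$ with $p_0\in L_0=B_r$; then $p_k\in L_k$ for each $k$. Either $p_0\in B_{x_0}$, or $p_0$'s subtree of bag-indices contains $r$ and therefore avoids $T'$; in both cases the path must hit $B_{x_0}$ at some index $k^*$, and the assumption $u\notin B_{x_0}$ forces $k^*\le i-2$. Now edge-maximality of $H$ with respect to $\mathcal{T}$ makes $B_{x_0}$ a clique in $H$; picking any $v_0\in B_{x_0}\cap V(X)$, which exists because $x_0\in T_X$, the edge $p_{k^*}v_0\in E(H)$ gives $d_H(v_0,L_0)\le k^*+1\le i-1$, contradicting $v_0\in L_j$ with $j\ge i$. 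This establishes the key claim and combines with the first paragraph to finish the proof.

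The step I expect to be mildly finicky is the separator invocation: I need to handle $x_0=r$ as a small special case so that $r\notin T'$ is automatic, and I need to verify carefully that the intermediate vertices $p_k$ of the chosen shortest path really do lie in $L_k$ (else a shorter path to $u$ would exist). Both are routine but deserve a sentence each in the final write-up.
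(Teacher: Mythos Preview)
Your argument is correct. The paper does not actually prove \cref{up-neighbours}: it is stated as an observation with the remark that it has a ``straightforward proof'' and is left to the reader. Your approach---locating the root $x_0$ of the subtree $T_X$, using edge-maximality to make $B_{x_0}$ a clique, and deriving a contradiction from a shortest $L_0$--$u$ path that must pass through $B_{x_0}$ too early---is exactly the standard way to establish this fact, and all the steps check out. One tiny wording quibble: when you say $T_{p_0}$ ``avoids $T'$'' you mean it is not contained in $T'$ (it could still intersect $T'$), but the separator conclusion you draw from this is correct regardless.
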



We will make use of the following fairly standard vertex-weighted separator lemma.  Similar lemmas with similar proofs appear in  \citet{robertson.seymour:graph}, but we provide a proof for the sake of completeness.

\begin{lem}\label{weighted-separator}
    Let $H$ be a graph; let $\mathcal{T}:=(B_x:x\in V(T))$ be a tree decomposition of $H$; and let $\xi:V(H)\to\R$ be a function that is positive on $V(H)$.  Then, for any $c\in\N\setminus\{0\}$, there exists $S_T\subseteq V(T)$ of size $|S_T|\le c-1$ such that, for each component $X$ of $H-(\bigcup_{x\in S_T} B_x)$, $\sum_{v\in V(X)} \xi(v) \le \tfrac{1}{c}\cdot\sum_{v\in V(H)} \xi(v)$.
\end{lem}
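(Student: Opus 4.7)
The plan is to transfer the vertex weights $\xi$ from $H$ onto the nodes of $T$, reduce the statement to a standard separator claim on weighted trees, and prove the latter by a short rooted-tree induction. Root $T$ at an arbitrary node and, for each $v\in V(H)$, assign $\xi(v)$ to the unique topmost node $x_T(v)$ with $v\in B_{x_T(v)}$ (well defined because $\{x:v\in B_x\}$ is a connected subtree of $T$). This defines $w(y):=\sum_{v:x_T(v)=y}\xi(v)$ on $V(T)$ with $\sum_{y\in V(T)}w(y)=W:=\sum_{v\in V(H)}\xi(v)$.

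The key auxiliary claim is: for any rooted tree $U$ with nonnegative node-weights of total mass $W'$ and any threshold $M>0$, there exists $S\subseteq V(U)$ with $|S|\le\lfloor W'/M\rfloor$ such that every component of $U-S$ has weight at most $M$. I would prove this by induction on $|V(U)|$. If $W'\le M$, take $S=\emptyset$. Otherwise, let $x$ be a deepest node whose rooted subtree $U_x$ satisfies $w(U_x)\ge M$; by maximality of depth, every child subtree of $x$ has weight strictly less than $M$. Placing $x$ into $S$, the components strictly below $x$ are exactly these child subtrees, all of the desired size. What remains above has total weight $W'-w(U_x)\le W'-M$, and the induction hypothesis handles it with at most $\lfloor(W'-M)/M\rfloor=\lfloor W'/M\rfloor-1$ additional nodes. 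Applying this with $U:=T$ and $M:=W/c$ yields $S_T\subseteq V(T)$ of size at most $c$ such that every component of $T-S_T$ has $w$-weight at most $W/c$.

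Finally, I would translate back to $H$. Let $X$ be any component of $H-\bigcup_{x\in S_T}B_x$. Each $v\in V(X)$ has its bag-set $\{x:v\in B_x\}$ a connected subtree of $T$ that avoids $S_T$ (otherwise $v$ would have been removed), so this bag-set lies in a single component of $T-S_T$. Moreover, for any edge of $X$ the two endpoints share a bag whose index also avoids $S_T$; chaining along paths in $X$ shows that a single component $T'$ of $T-S_T$ contains $x_T(v)$ for every $v\in V(X)$. Therefore $\sum_{v\in V(X)}\xi(v)\le\sum_{y\in V(T')}w(y)\le W/c$, as required.

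I do not expect a real obstacle. The substantive part is the weighted-tree induction, which is a standard centroid-style argument; the only step deserving care is the component-to-subtree translation, which rests directly on the connected-bag-membership and edge-containment axioms of a tree decomposition.
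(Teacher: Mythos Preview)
Your argument is correct. The paper does not actually supply a proof of this lemma; it simply states it as a ``well-known and easy to prove vertex-weighted separator lemma'' and moves on. Your approach---pushing the weights onto $T$ via $x_T(\cdot)$, running a greedy bottom-up separator argument on the weighted tree, and then pulling the bound back to components of $H$ using the bag-connectivity and edge-containment axioms---is exactly the standard proof one would expect here, and all the steps check out (including the edge case where the heavy node is the root, and the observation that any bag witnessing an edge of $X$ must itself avoid $S_T$).
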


\begin{proof}
  Let $\Xi:=\sum_{v\in V(H)} \xi(v)$.
  The proof is by induction $c$.  The base case $c=1$ is trivial, since $S_T:=\emptyset$ satisfies the requirements of the lemma.  Now assume $c\ge 2$.  Root $T$ at some arbitrary vertex $r$ and for each $x\in V(T)$, let $T_x$ denote the subtree of $T$ induced by $x$ and all its $T$-descendants.  Let $G_x:=G[\bigcup_{y\in V(T_x)} B_y]$.  Say that a node $x$ of $T$ is \defin{heavy} if $\sum_{v\in V(G_x)} \xi(v) \ge \tfrac{1}{c}\cdot \Xi$. Since $c\ge 1$, $r$ is heavy, so $T$ contains at least one heavy vertex. Let $x$ be a heavy vertex of $T$ with the property that no child of $x$ is also heavy.  Then $G':=G-V(G_x)$ has weight $\sum_{v\in V(G')} \xi(v) \le (1-1/c)\cdot\Xi$.  On the other hand, every component $C$ of $G-V(G')-B_x$ has weight $\sum_{v\in V(C)} \xi(v) \le \tfrac{1}{c}\cdot\Xi$.  Apply induction on the graph $G'$ with tree decomposition $\mathcal{T}':=(B_x\cap V(G'):x\in V(T))$ and $c':=c-1$ to obtain a set $S_T'$ of size at most $c-2$ such that each component $X$ of $G'-(\bigcup_{x\in S_T} B_x)$, has weight at most $\sum_{v\in V(X)} \xi(v) \le \tfrac{1}{c-1}\cdot(1-\tfrac{1}{c})\cdot\Xi = \tfrac{1}{c}\cdot \Xi$.  The set $S_T:=S_T'\cup\{x\}$ satisfies the requirements of the lemma.
\end{proof}

%

\subsection{Simple Treewidth}

A tree decomposition $\mathcal{T}:=(B_x:x\in V(T))$ of a graph $H$ is \defin{$t$-simple} if it has width at most $t$ and, for each $t$-element subset $S\subseteq V(H)$, $|\{x\in V(T):S\subseteq B_x\}|\le 2$.  The \defin{simple treewidth} $\stw(H)$ of a graph $H$ is the minimum integer $t$ such that $H$ has a $t$-simple tree decomposition \cite{knauer.ueckerdt:simple}.  \citet{knauer.ueckerdt:simple} define simple treewidth and  \citet{wulf:stacked} studies it extensively in his thesis.

%

We work with simple treewidth because it arises naturally in the graphs we are interested in:

\begin{lem}[\cite{knauer.ueckerdt:simple,markenzon.justel.ea:subclasses}]\label{simple-small-cases}
    For any graph $H$,
    \begin{compactenum}[(i)]
        \item $\stw(H)\le 1$ if and only if $H$ is a collection of vertex-disjoint paths;
        \item $\stw(H)\le 2$ if and only if $H$ is outerplanar;
        \item $\stw(H)\le 3$ if and only if $\tw(H)\le 3$ and $H$ is planar.
    \end{compactenum}
\end{lem}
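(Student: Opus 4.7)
My plan is to handle all three parts through the same template: first reduce to the edge-maximal case. Filling in all missing edges inside each bag of a $t$-simple tree decomposition preserves both the width and the condition that each $t$-set lies in at most two bags (adding edges does not add $t$-sets to any bag), so $H$ embeds in an edge-maximal $\hat H$ with $\stw(\hat H) \le t$; since ``disjoint union of paths'', ``outerplanar'', and ``planar of treewidth $\le 3$'' are each closed under taking subgraphs, proving each direction for $\hat H$ suffices. In $\hat H$ every bag is a clique, and the decomposition tree describes how $\hat H$ is glued together from copies of $K_{t+1}$ along shared $K_t$'s, with each such $K_t$ shared by at most two cliques.

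For (i), the ``if'' direction is immediate: arrange the edges of each path consecutively in the decomposition tree, with singleton bags for isolated vertices. Conversely, if $\stw(H)\le 1$ then every vertex lies in at most two bags of size at most $2$ and so has degree at most $2$; combined with $\tw(H)\le 1$, $H$ is a disjoint union of paths. For the forward directions of (ii) and (iii) I would use a common induction on $|V(\hat H)|$. A leaf $x$ of the decomposition tree contributes a unique vertex $v\in B_x$ not appearing in any other bag, and since $\hat H$ is edge-maximal, $N_{\hat H}(v)=B_x\setminus\{v\}$ is a clique of size at most $t$. Removing $v$ yields a smaller graph $\hat H-v$ of simple treewidth at most $t$ (via the decomposition with $B_x$ deleted), to which induction applies; it then remains to show that the clique $B_x\setminus\{v\}$ bounds a face in the resulting planar (resp.\ outerplanar) embedding, so that $v$ can be reinserted in that face.

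The converse directions are by direct construction. For (ii), triangulating an outerplanar embedding of $H$ into a maximal outerplane graph $\hat H$ and taking the weak dual of $\hat H$ as the decomposition tree, with bags being the vertex sets of inner triangular faces, yields a $2$-simple tree decomposition since each internal edge bounds exactly two triangles. For (iii), every maximal planar graph of treewidth $\le 3$ is an Apollonian network built from $K_4$ by iteratively subdividing a triangular face by a new apex vertex adjacent to the three corners; taking one bag per created $K_4$ yields a tree decomposition in which each triangular face is shared by at most two bags, so $\stw\le 3$. The main obstacle is justifying the inductive step in the forward direction of (ii) and (iii): showing that the clique $B_x\setminus\{v\}$ actually bounds a free face of the embedding produced by induction rather than lying in its interior. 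I would handle this with a ``frontier'' invariant, maintained along the decomposition tree, asserting that every $t$-clique contained in exactly one remaining bag bounds a face of the current embedded region; the simple condition is precisely what makes this invariant propagate, because an extra incidence of $B_x\setminus\{v\}$ with another bag is exactly what would force it off the frontier.
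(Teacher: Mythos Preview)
The paper does not prove this lemma; it is quoted with attribution to \cite{knauer.ueckerdt:simple,markenzon.justel.ea:subclasses} and used as a black box, so there is no in-paper argument to compare your proposal against.

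On the merits of your outline: parts (i) and (ii) are essentially correct, and the ``frontier'' invariant you propose for the forward directions of (ii) and (iii) is the right idea---the simplicity bound on how many bags contain a given $t$-clique is exactly what keeps that clique on the boundary of the partial embedding.

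There is, however, a genuine gap in the converse of (iii). You invoke ``every maximal planar graph of treewidth $\le 3$ is an Apollonian network'' and then read off a $3$-simple decomposition from the Apollonian stacking. But for an arbitrary planar $H$ with $\tw(H)\le 3$ you have not produced such a maximal supergraph: triangulating a planar embedding of $H$ may push the treewidth above $3$, while taking the edge-maximal closure of $H$ with respect to an arbitrary width-$3$ tree decomposition may destroy planarity (a $3$-tree is planar if and only if it is Apollonian, which is what you are trying to show). The substantive content of (iii) is precisely that every planar partial $3$-tree sits inside a planar $3$-tree, and your sketch assumes this rather than proving it. One route to close the gap is to run your frontier argument constructively: start from a width-$3$ tree decomposition of $H$ and inductively realise it as an Apollonian stacking, using planarity of $H$ at each step to argue that the next bag can be placed in a free triangular face---but this step is where the real work is, and it is not automatic.
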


Simple treewidth and treewidth are closely related:

\begin{lem}[\cite{knauer.ueckerdt:simple}]\label{simple-treewidth-vs-treewidth}
    For every graph $G$, $\tw(G)\le \stw(G)\le \tw(G)+1$.
\end{lem}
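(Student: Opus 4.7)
The inequality $\tw(G)\le\stw(G)$ is immediate: any $t$-simple tree decomposition is, by definition, a tree decomposition of width $t$, so the minimum width of any tree decomposition is at most $\stw(G)$.

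For the upper bound, set $t:=\tw(G)$. The plan is to reduce to the case where $G$ is a \emph{$t$-tree} -- a graph built from $K_{t+1}$ by repeatedly attaching a new vertex to an existing $t$-clique. Every graph of treewidth at most $t$ is a spanning subgraph of some $t$-tree $H$ on $V(G)$ (take a smooth width-$t$ tree decomposition of $G$, turn every bag into a clique, and, if necessary, add further edges so that the resulting chordal graph is a maximal $(t+1)$-clique-free graph). Simple treewidth is monotone under spanning subgraph containment, because any $(t+1)$-simple tree decomposition of $H$ also serves as a $(t+1)$-simple tree decomposition of $G$. Thus it suffices to prove $\stw(H)\le t+1$ whenever $H$ is a $t$-tree.

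I would prove this by induction on $|V(H)|$. The base case $H=K_{t+1}$ is immediate: the single bag $V(H)$ gives a $(t+1)$-simple tree decomposition of width $t$. For the inductive step, pick a simplicial vertex $v$ of $H$ (one exists since $t$-trees are chordal and, once $|V(H)|>t+1$, non-complete) and set $C:=N_H(v)$, a $t$-clique of the $t$-tree $H-v$. By induction there is a $(t+1)$-simple tree decomposition $\mathcal{T}':=(B'_x:x\in V(T'))$ of $H-v$ of width at most $t+1$, and because $C$ is a clique some bag $B'_z$ satisfies $C\subseteq B'_z$.

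I would then reinsert $v$ by a short two-case analysis. If $|B'_z|\le t$ (forcing $B'_z=C$), enlarge this bag to $B'_z\cup\{v\}$, a bag of size $t+1$. Otherwise $|B'_z|\in\{t+1,t+2\}$, and I attach a new leaf $y$ to $z$ in $T'$ with bag $B_y:=C\cup\{v\}$, also of size $t+1$. In both cases the width stays at most $t+1$, and one checks routinely that the result remains a valid tree decomposition (the subtree for $v$ is a single node, and the subtree for each $c\in C$ is still connected because the added/modified bag is adjacent to $z$ in the tree). The crucial point for preserving $(t+1)$-simplicity is that $v$ appears in \emph{exactly one} bag of the modified decomposition: hence every $(t+1)$-subset containing $v$ has multiplicity $1$, while every $(t+1)$-subset $S$ not containing $v$ cannot be a subset of the new or modified bag (which has size exactly $t+1$ and contains $v$), so its multiplicity is the same as in $\mathcal{T}'$ and therefore $\le 2$. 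The main obstacle is really just this last bookkeeping verification, which becomes routine once one isolates the single-bag appearance of~$v$.
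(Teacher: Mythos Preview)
The paper does not prove this lemma; it simply quotes it from \cite{knauer.ueckerdt:simple}. So there is nothing in the paper to compare your argument against.

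Your argument is sound. The lower bound is indeed immediate from the definition. For the upper bound, reducing to $t$-trees and inducting on $|V(H)|$ via a simplicial vertex is the natural approach, and your key observation---that the new vertex $v$ lies in exactly one bag of size exactly $t{+}1$, so no $(t{+}1)$-subset avoiding $v$ can live in that bag---cleanly handles the simplicity check in both cases.

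Two small points. First, a typo: a $t$-tree is a maximal $K_{t+2}$-free chordal graph, not ``$(t{+}1)$-clique-free'' (it is full of $(t{+}1)$-cliques). Second, in the base case you produce a decomposition of width $t$ rather than $t{+}1$; depending on whether one reads ``width $t$'' in the definition of $t$-simple as ``exactly $t$'' or ``at most $t$'', you may want to note that one can always pad a bag with a dummy vertex to reach width exactly $t{+}1$ without violating the multiplicity condition. Neither point affects the correctness of the argument.
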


%

The following lemma, whose proof uses minor-monotonicity \cite[Theorem~5.2]{wulf:stacked}, is due to David\ R.\ Wood (personal communication).

\begin{lem}\label{simple-bfs-layers}
    Let $H$ be a connected graph that is edge-maximal with respect to some $r$-rooted $t$-simple tree decomposition $\mathcal{T}:=(B_x:x\in V(T))$ of $H$ and let $L_0,\ldots,L_m$ be the BFS ordering of $H$ with $L_0:=B_r$.   Then, for each $i\in\{1,\ldots,m\}$, $\stw(H[L_i])\le t-1$.
\end{lem}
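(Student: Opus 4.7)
The plan is to reduce the claim to the following universal-vertex fact via minor-monotonicity of simple treewidth (Theorem~5.2 of \cite{wulf:stacked}): if $G$ contains a vertex $v$ adjacent to every other vertex of $G$, then $\stw(G-v)\le \stw(G)-1$.

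To see why this suffices, I first construct a minor $H^*$ of $H$ that is $H[L_i]$ with an added universal vertex. Because $H$ is connected and each vertex of $L_j$ (for $j\ge 1$) has a neighbour in $L_{j-1}$ by the BFS property, the set $L_0\cup\cdots\cup L_{i-1}$ induces a connected subgraph of $H$. Contract this connected subgraph to a single vertex $r^*$ and delete $L_{i+1}\cup\cdots\cup L_m$; the result is a minor $H^*$ of $H$ with $V(H^*)=L_i\cup\{r^*\}$. Since every vertex of $L_i$ has at least one neighbour in $L_{i-1}$, the vertex $r^*$ is adjacent in $H^*$ to every vertex of $L_i$, and $H^*-r^* = H[L_i]$. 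Minor-monotonicity yields $\stw(H^*)\le\stw(H)\le t$, and the universal-vertex fact, applied to $G=H^*$ and $v=r^*$, then gives $\stw(H[L_i])\le \stw(H^*)-1\le t-1$.

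To prove the universal-vertex fact, fix an $s$-simple tree decomposition $(C_y : y\in V(S))$ of $G$ with $s=\stw(G)$, let $S_v := \{y : v\in C_y\}$ (a subtree of $S$), and set $C'_y := C_y\setminus\{v\}$ for $y\in S_v$. Universality of $v$ implies $S_w\cap S_v\neq\emptyset$ for every $w\neq v$; and for each edge $wu$ of $G-v$, the three subtrees $S_v,S_w,S_u$ pairwise intersect (using that $\{w,u\}$ lies in some bag of the original decomposition), so the Helly property for subtrees of a tree yields a common node in $S_v$. This makes $(C'_y : y\in S_v)$ a valid tree decomposition of $G-v$ of width at most $s-1$, since each surviving bag loses the vertex $v$. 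For $(s-1)$-simplicity, any $(s-1)$-element set $T\subseteq V(G-v)$ satisfies
\[
    \{y\in S_v : T\subseteq C'_y\} = \{y\in V(S) : T\cup\{v\}\subseteq C_y\},
\]
which has cardinality at most $2$ by $s$-simplicity of the original decomposition applied to the $s$-element set $T\cup\{v\}$.

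The main obstacle I anticipate is the $(s-1)$-simplicity verification, since simplicity conditions become \emph{more} restrictive as the width drops; the key observation that resolves it is the bijection between $(s-1)$-subsets of $V(G-v)$ and $s$-subsets of $V(G)$ containing $v$, which lets the original hypothesis transfer directly. It is also worth noting that edge-maximality of $H$ with respect to $\mathcal{T}$, though stated in the hypothesis, plays no role in this particular approach.
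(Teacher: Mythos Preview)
Your argument is precisely the route the paper attributes to Wood: contract the first $i$ BFS layers to obtain a minor of $H$ with a universal vertex, invoke minor-monotonicity of simple treewidth, and then strip off the universal vertex. The paper does not spell out the details, and your proof of the universal-vertex fact---restricting to the subtree $S_v$, using Helly for the edge condition, and transferring $(s-1)$-simplicity via the bijection $T\leftrightarrow T\cup\{v\}$---is clean and correct.

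One correction, though: your closing remark that edge-maximality plays no role is wrong. Your justification that $L_0\cup\cdots\cup L_{i-1}$ is connected reduces, via the BFS property, to the connectedness of $L_0$ itself, and $L_0=B_r$ is connected precisely because edge-maximality makes $B_r$ a clique. Without that hypothesis the set can be disconnected: take $H$ to be the path $a\,c\,d\,e\,b$ with $L_0=\{a,b\}$; then $L_1=\{c,e\}$ and $H[L_0\cup L_1]$ consists of two disjoint edges, so the contraction to a single vertex $r^*$ is not a legal minor operation. Edge-maximality is doing exactly one small job in your argument, but it is doing it.
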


\subsection{Product Structure}

For two graphs $G_1$ and $G_2$, the \defin{strong graph product} of $G_1$ and $G_2$, denoted $G_1\boxtimes G_2$, is a graph whose vertex set is the Cartesian product $V(G_1)\times V(G_2)$ and that contains an edge between $v=(v_1,v_2)$ and $w=(w_1,w_2)$ if and only if
\begin{inparaenum}[(i)]
    \item $v_1=w_1$ and $v_2w_2\in E(G_2)$;
    \item $v_2=w_2$ and $v_1w_1\in E(G_1)$; or
    \item $v_1w_1\in E(G_1)$ and $v_2w_2\in E(G_2)$.
\end{inparaenum}

The following result of \citet{dujmovic.joret.ea:planar}, which builds on earlier work of \citet{pilipczuk.siebertz:polynomial}, shows that every planar graph is the subgraph of a strong product of very simple graphs.

\begin{thm}[\cite{dujmovic.joret.ea:planar}]\label{product-structure}
    For every $n$-vertex planar graph $G$, there exists a graph $H$, $|H|\le n$, $\stw(H)\le 3$, and a path $P$ such that $G$ is isomorphic to a subgraph of $H\boxtimes K_3\boxtimes P$.
\end{thm}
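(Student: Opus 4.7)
The plan is to follow the now-standard approach via BFS layerings and vertical paths in a spanning tree, refined to yield the simple-treewidth bound $\stw(H)\le 3$. First reduce to the case where $G$ is an edge-maximal planar graph, i.e.\ a triangulation: a product containment for a spanning supergraph of $G$ restricts to one for $G$. Fix an arbitrary root $r \in V(G)$ and a BFS spanning tree $T$ of $G$ rooted at $r$; this induces the BFS layering $\mathcal{L} = (L_0,\ldots,L_m)$ with $L_0=\{r\}$, and the path factor $P$ of the target product will be the path on the layer indices $0,1,\ldots,m$.

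The heart of the proof is to construct a partition $\mathcal{V} = \{V_1,\ldots,V_k\}$ of $V(G)$ into vertical paths of $T$ (each $V_j$ a subpath of a root-to-leaf path in $T$, hence meeting each BFS layer in at most one vertex), together with a tree decomposition of the quotient graph $H$ (whose vertex set is $\mathcal{V}$, with $V_iV_j\in E(H)$ iff some edge of $G$ joins them) that certifies $\stw(H)\le 3$. The construction is a Sperner-style recursion on the plane embedding: starting from the outer triangle, each triangular region whose two ``tree sides'' meet at a common ancestor apex and whose third side is a non-tree chord is split by routing a vertical path of $T$ through its interior, producing a bounded number of smaller regions of the same type. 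The recursion tree supplies the tree decomposition of $H$, with the bag at each node holding the (at most three) vertical paths that bound the corresponding region; the planar structure forces any triple of vertical paths to bound at most two such regions, which is exactly the $3$-simplicity condition.

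To turn this into the embedding $G \hookrightarrow H \boxtimes K_3 \boxtimes P$, I would send each vertex $v$ to the triple $(V_{j(v)},\, c(v),\, i(v))$, where $V_{j(v)}\in\mathcal{V}$ is the unique vertical path containing $v$, $i(v)$ is the index of the BFS layer of $v$, and $c(v) \in \{1,2,3\}$ is a label distinguishing the three vertical paths bounding the current recursion region at $v$. A short case analysis on the types of edges in a strong product (those changing one, two, or three coordinates) confirms that each edge of $G$ satisfies the required adjacency condition, using that BFS edges change the layer index by at most $1$ and that cross-edges between vertical paths correspond to edges of $H$. Finally $|V(H)|=|\mathcal{V}|\le |V(G)|=n$ is immediate from the partition.

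The main obstacle is the recursive step: showing that inside every current triangular region one can always locate a vertical path of $T$ whose endpoints lie on the region's boundary and that properly splits the region into strictly smaller regions of the same ``two tree-sides plus one chord'' form, while simultaneously producing a tree decomposition whose bags reuse each triple of vertical paths at most twice. Obtaining this stronger $3$-simplicity (rather than only a treewidth bound, as in the earlier Pilipczuk--Siebertz construction yielding $\tw(H)\le 8$) is the crux of the theorem and requires careful exploitation of the triangulation structure together with the BFS property of $T$.
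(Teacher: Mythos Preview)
The paper does not prove this theorem at all: it is quoted as a result of \citet{dujmovic.joret.ea:planar} and used as a black box (in the proof of \cref{planar} and again for \cref{bounded-genus}). So there is no ``paper's own proof'' to compare against; any comparison must be to the original source.

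Compared to that source, your sketch has a genuine structural gap in how the $K_3$ factor arises. You describe a partition of $V(G)$ into \emph{single} vertical paths of the BFS tree, each meeting every layer in at most one vertex, and then take $H$ to be the quotient of that partition. With that choice the map $v\mapsto (V_{j(v)},i(v))$ already embeds $G$ into $H\boxtimes P$ with no $K_3$ needed, but the quotient $H$ of a single-vertical-path partition is exactly the Pilipczuk--Siebertz object, for which one only knows $\tw(H)\le 8$; there is no reason its simple treewidth should be at most $3$. Your proposed third coordinate $c(v)\in\{1,2,3\}$, described as ``distinguishing the three vertical paths bounding the current recursion region,'' does not fix this: $c(v)$ is then a function of $V_{j(v)}$ (the part containing $v$), so it adds no information and cannot lower the treewidth of $H$.

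What actually produces both the $K_3$ factor and the bound $\stw(H)\le 3$ in \citet{dujmovic.joret.ea:planar} is a coarser partition into \emph{tripods}: each part is the union of up to three vertical paths whose top endpoints form a triangular face inside the current region, so a part meets each BFS layer in at most three vertices (this is the $K_3$), and the Sperner-type recursion now spawns at most three subregions per step, each bounded by at most three tripods, yielding a width-$3$ tree decomposition of the tripod quotient in which every size-$3$ bag occurs at most twice (this is the $3$-simplicity). Your outline of the recursion and of the embedding is otherwise on the right track; the fix is to replace ``vertical path'' by ``tripod'' throughout and to let $c(v)$ index which of the (at most three) legs of its tripod contains $v$.
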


As the following simple lemma shows, product structure is highly relevant to $\ell$-ranking:

\begin{lem}\label{product-lemma}
    For any two graphs $G_1$ and $G_2$, $\lrn(G_1\boxtimes G_2)\le \lrn(G_1)\cdot\dlcn(G_2)$.
\end{lem}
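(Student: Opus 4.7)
The plan is to build the $\ell$-ranking of $G_1\boxtimes G_2$ as a product of a given $\ell$-ranking $\varphi_1:V(G_1)\to\{1,\ldots,a\}$ of $G_1$ with $a=\lrn(G_1)$ colours and a given distance-$\ell$ colouring $\varphi_2:V(G_2)\to\{1,\ldots,b\}$ of $G_2$ with $b=\dlcn(G_2)$ colours. Define $\varphi:V(G_1\boxtimes G_2)\to\{1,\ldots,a\}\times\{1,\ldots,b\}$ by $\varphi((v_1,v_2)):=(\varphi_1(v_1),\varphi_2(v_2))$, and order the colour set lexicographically, i.e., $(i,j)<(i',j')$ iff $i<i'$ or ($i=i'$ and $j<j'$). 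This gives a linearly ordered palette of size $a\cdot b$, which is what we want.

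To verify that $\varphi$ is an $\ell$-ranking, fix any non-trivial path $u_0,\ldots,u_p$ in $G_1\boxtimes G_2$ of length $p\le\ell$ and write $u_i=(x_i,y_i)$. The key observation is that the projections $x_0,\ldots,x_p$ and $y_0,\ldots,y_p$ are each walks in $G_1$ and $G_2$ respectively (possibly with consecutive repeats), of length at most $\ell$. I would split into two cases based on whether the $G_1$-coordinates at the endpoints differ. If $\varphi_1(x_0)\ne\varphi_1(x_p)$, then $\varphi(u_0)\ne\varphi(u_p)$ by construction, so condition~(i) holds.

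If $\varphi_1(x_0)=\varphi_1(x_p)$ I split into the sub-cases $x_0\ne x_p$ and $x_0=x_p$. When $x_0\ne x_p$, the walk $x_0,\ldots,x_p$ has distinct endpoints and length at most $\ell$, so \cref{walks-too} applied to $\varphi_1$ forces $\varphi_1(x_0)<\max\{\varphi_1(x_0),\ldots,\varphi_1(x_p)\}$; taking an index $i$ that achieves this maximum gives $\varphi(u_i)>\varphi(u_0)$ under the lexicographic order, so condition~(ii) holds for $\varphi$. When $x_0=x_p$, the fact that $u_0\ne u_p$ forces $y_0\ne y_p$; collapsing consecutive duplicates in $y_0,\ldots,y_p$ produces a walk in $G_2$ between these two distinct vertices of length at most $\ell$, so $1\le d_{G_2}(y_0,y_p)\le\ell$, and since $\varphi_2$ is a distance-$\ell$ colouring we get $\varphi_2(y_0)\ne\varphi_2(y_p)$, hence $\varphi(u_0)\ne\varphi(u_p)$.

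The only subtlety I expect is bookkeeping around walks versus paths in the projections: the projected sequences in $G_1$ and $G_2$ are not paths in general, so it is essential to invoke \cref{walks-too} rather than the original definition of $\ell$-ranking on $\varphi_1$, and to pass to the walk distance on the $G_2$ side. Once these are handled carefully, the argument is a direct case analysis and yields the desired bound $\lrn(G_1\boxtimes G_2)\le a\cdot b=\lrn(G_1)\cdot\dlcn(G_2)$.
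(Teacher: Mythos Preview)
Your proof is correct and follows essentially the same approach as the paper: combine an $\ell$-ranking of $G_1$ with a distance-$\ell$ colouring of $G_2$, project a path in the product onto each factor, and use \cref{walks-too} on the $G_1$-projection together with the distance-$\ell$ property on the $G_2$-projection. The only cosmetic differences are that the paper encodes the colour pair as a single integer $\dlcn(G_2)\cdot\rho(x)-\psi(y)$ rather than using lexicographic order, and it streamlines the case split by assuming $\varphi(u_0)=\varphi(u_p)$ from the outset (which immediately forces both coordinates' colours to agree), whereas you handle the cases $\varphi_1(x_0)\ne\varphi_1(x_p)$ and $x_0=x_p$ explicitly.
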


\begin{proof}
    For each $(x,y)\in V(G_1\boxtimes G_2)$, let $\varphi(x,y):=\dlcn(G_2)\cdot \rho(x) - \psi(y)$ where $\rho:V(G_1)\to\{1,\ldots,\lrn(G_1)\}$ is an $\ell$-ranking of $G_1$ and $\psi:V(G_2)\to\{0,\ldots,\dlcn(G_2)-1\}$ is a distance-$\ell$ colouring of $G_2$.

    To see that $\varphi$ is an $\ell$-ranking, consider any
    path $u_0,\ldots,u_p$ in $G_1\boxtimes G_2$ of length $p\le\ell$ such that $\varphi(u_0)=\varphi(u_p)$.  We must show that $\varphi(u_0)<\max\{\varphi(u_0),\ldots,\varphi(u_p)\}$.

    For each $i\in\{0,\ldots,p\}$, let $(u_{i,1},u_{i,2}):=u_i$, so that $u_{i,1}\in V(G_1)$ and $u_{i,2}\in V(G_2)$. Since $\varphi(u_0)=\varphi(u_p)$, $\psi(u_{0,2})=\psi(u_{p,2})$. Since $\psi$ is a distance-$\ell$ colouring of $G_2$ and $p\le\ell$, this implies that $u_{0,2}=u_{p,2}$.  This implies that $u_{0,1}\neq u_{p,1}$, for otherwise $u_0=u_p$ and $u_0,\ldots,u_p$ is not a path.  Therefore, $u_{0,1},\ldots,u_{p,1}$ is a walk in $G_1$ with distinct endpoints.  Let $w_0,\ldots,w_q$ be a shortest path from $w_0:=u_{0,1}$ to $w_q:=u_{p,1}$ in $G_1[\{u_{0,1},\ldots,u_{p,1}\}]$.

    Since $\rho(u_{0,1})=\rho(u_{p,1})$, $\rho$ is an $\ell$-ranking of $G_1$, and $q\le p\le\ell$,   $\rho(u_{0,1})=\rho(w_{0})<\max\{\rho(w_0),\ldots,\rho(w_q)\}\le\max\{\rho(u_{0,1}),\ldots,\rho(u_{p,1})\}$ and therefore $\varphi(u_0)<\max\{\varphi(u_0),\ldots,\varphi(u_p)\}$, as required.
\end{proof}

Note that the graph $K_3\boxtimes P$, which appears in \cref{product-structure}, has maximum degree 8 so $(K_3\boxtimes P)^\ell$ has maximum degree at most $8\cdot 7^{\ell-1}$.  Since distance-$\ell$ colouring any graph $G$ is equivalent to properly colouring $G^{\ell}$, this implies that $\dlcn(K_3\boxtimes P)\le 8\cdot7^\ell+1$. The following observation improves this constant using the fact that $(K_d\boxtimes P)^\ell$ is $(d(\ell+1)-1)$-degenerate (as can be seen by ordering vertices of $(K_d\boxtimes P)$ by the order that their second coordinate appears in $P$).

\begin{obs}\label{dumb}
    For any $d\in\N$ and any path $P$, $\dlcn(K_d\boxtimes P)\le d(\ell+1)$.
\end{obs}

%

We remark that \cref{dumb} is tight since, for any path $P$ of length at least $\ell$, $(K_d\boxtimes P)^\ell$ contains cliques of order $d(\ell+1)$.

\subsection{Inequalities for Iterated Logarithms}

For any $x> 0$ and $a\ge 0$, we have the inequality,
\begin{equation}
    \log (x+a) = \log (x(1+a/x)) = \log x + \log(1+a/x)
    \le \log x + \frac{a}{x} \enspace , \label{log-x-plus-a}
\end{equation}
where the inequality follows from the inequality $1+z\le e^z$, valid for all $z\in\R$.

Recall that, for any integer $i\ge 0$,
\[
    \log^{(i)} x :=
      \begin{cases}
          x & \text{for $i=0$} \\
          \log\left(\log^{(i-1)}x\right) & \text{for $i\ge 1$.} \\
      \end{cases}
\]
Define the \defin{$\tau$ower} function $\tau:\N\to\N$ by
\[
  \tau(i) :=
    \begin{cases}
        1 & \text{for $i=0$} \\
        e^{\tau(i-1)} & \text{for $i\ge 1$.} \\
    \end{cases}
\]
Note that, for all $i\in\N$, $\log^{(i)}\tau(i)=1$.

For any $x > \tau(i-1)$ and any $a\ge 0$, \cref{log-x-plus-a} generalizes as follows (by induction on $i$):
\begin{equation}
    \log^{(i)}(x+a) \le \log^{(i)} x + \frac{a}{\prod_{j=0}^{i-1}\log^{(j)} x} \label{logi-x-plus-a}
\end{equation}

In several places we have ratios involving iterated logarithms, in which case we make use of the following consequence of \cref{logi-x-plus-a}
\begin{equation}
    \frac{\log^{(i)} (x+a)}{\log^{(i)} x} \le 1 + \frac{a}{\prod_{j=0}^{i}\log^{(j)} x} \enspace, \label{logi-ratio}
\end{equation}
which is valid for all $x> \tau(i-1)$.

\subsection{The $\gamma_{i,k}$ Function}

For any $i\in\N\setminus\{0\}$, any real $k>\tau(i-1)$, and any real $n\in[1,(\log^{(i)} k)^k]$, we define $\gamma_{i,k}(n)$ to be the solution $x\in[\tau(i),k]$ to the equation
\begin{equation}
   (\log^{(i)} k)^k/(\log^{(i)} x)^{x}=n \enspace .  \label{gamma_eqn}
\end{equation}
The value of $\gamma_{i,k}(n)$ is well defined and $\tau(i)\le \gamma_{i,k}(n)\le k$, for the following reasons:  For $x\in[\tau(i),k]$, the left hand side of \cref{gamma_eqn} is a continuous strictly decreasing function of $x$. Setting $x=\tau(i)$, the left hand side becomes $(\log^{(i)} k)^k \ge n$. Setting $x=k$, the left hand side becomes $1\le n$.  We note (and later make use of) the fact that $\gamma_{i,k}(n)$ is a decreasing function of $n\in [\tau(i),k]$.

\section{Lower Bounds}
\label{lower-bounds}

We now prove the lower bound in \cref{t-trees}, which establishes all the other lower bounds. The idea is to construct a graph $G$ that has a BFS layering $L_0,\ldots,L_m$ such that, for each $i\in\{0,\ldots,m-1\}$ and each vertex $a\in L_i$, $G[N_G(a)\cap L_{i+1}]$ is a collection of treewidth-$(t-1)$ graphs $U_{a,0},\ldots,U_{a,k}$, each of which is a copy of a small treewidth-$(t-1)$ graph $U$ that requires at least $h$ colours.  This forces the colour of $a$ to exceed, by at least $h$, the smallest colour used in $U_{a,0},\ldots,U_{a,k}$.  Proceeding bottom up, this forces the vertex in $L_0$ to receive a colour larger than $hm$.  The lower bound is then obtained by using induction on $t$ to upper bound the size of the graph $U$ needed to ensure that $\trn(U)\ge h$ and choosing the parameters $h$ and $m$ appropriately.

\begin{lem}\label{apex-graph}
    Let $h,k\in\N\setminus\{0\}$, let $U$ be a graph with $\trn(U)\ge h$, and let $G$ be a graph obtained by taking $k+1$ disjoint copies $U_0,\ldots,U_k$ of $U$ and adding an apex vertex $a$ adjacent to each $v\in\bigcup_{i=0}^k V(U_i)$.  Let $\varphi:V(G)\to\{1,\ldots,k\}$ be a $2$-ranking of $G$ with the property that $\varphi(v)\ge k_0$ for each $v\in\bigcup_{i=0}^k V(U_i)$ and some $k_0\in\{1,\ldots,k-h\}$.  Then $\varphi(a) \ge k_0+h$.
\end{lem}

\begin{proof}
    Since $\trn(U_i)\ge h$ and each $v\in V(U_i)$ has $\varphi(v)\ge k_0$, there exists some $v_i\in V(U_i)$ such that $\varphi(v_i)\ge k_0+h-1$, for each $i\in\{0,\ldots,k\}$.  Since $|\{0,\ldots,k\}|=k+1>k-k_0+1=|\{k_0,\ldots,k\}|$ the Pigeonhole Principle implies that there exists distinct $i,j\in\{0,\ldots,k\}$ such that $\varphi(v_i)=\varphi(v_j)$.  Since $v_i a v_j$ is a path in $G$, this implies that $\varphi(a)\ge \varphi(v_i)+1\ge k_0+h$.
\end{proof}

For a graph $U$ and integers $h,m\ge 0$, we define the \defin{$(h,m)$-boost} $U^{(h,m)}$ of $U$ as follows: The vertex set of $U^{(h,m)}$ is the disjoint union of $L_0,\ldots,L_m$.  The set $L_0:=\{a_0\}$ consists of a single vertex. For each $i\in\{1,\ldots,m\}$ and each $a\in L_{i-1}$, $U^{(h,m)}$ contains $hm+1$ disjoint copies $U_{a,0},\ldots,U_{a,hm}$ of $U$ and contains the edge $av$ for each $v\in\bigcup_{j=0}^{hm} V(U_{a,j})$.  This determines the set $L_i=\bigcup_{a\in L_{i-1}}\bigcup_{j=0}^{hm} V(U_{a,j})$.  As a simple example, if $U$ is a 1-vertex graph, then $U^{(h,m)}$ is a complete $(hm+1)$-ary tree of height $m$.

\begin{lem}\label{boost}
    For any non-empty graph $U$, any $m\in\N$, and any integer $h\in\{1,\ldots,\trn(U)\}$, $\trn(U^{(h,m)})\ge hm +1$.
\end{lem}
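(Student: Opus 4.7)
My plan is to propagate the apex-style lower bound of \cref{apex-graph} layer by layer through the BFS layering $L_0,\ldots,L_m$ of $U^{(h,m)}$, starting at $L_m$ and working up to the unique vertex $a_0\in L_0$.

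Suppose, for the sake of contradiction, that $\varphi\colon V(U^{(h,m)})\to\{1,\ldots,hm\}$ is a 2-ranking. I would prove, by induction on $j\in\{0,\ldots,m\}$, the sub-claim that every $a\in L_{m-j}$ satisfies $\varphi(a)\ge hj+1$. The base case $j=0$ is immediate since colours are positive integers. For the inductive step, fix $j\ge 1$ and $a\in L_{m-j}$; by construction $a$ is the apex of the $hm+1$ disjoint copies $U_{a,0},\ldots,U_{a,hm}$ of $U$, each of whose vertices lies in $L_{m-j+1}$. The induction hypothesis gives $\varphi(v)\ge h(j-1)+1$ for every such $v$, so the restriction of $\varphi$ to $V(U_{a,i})$ is a 2-ranking of (a copy of) $U$ taking values in $\{h(j-1)+1,\ldots,hm\}$. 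Subtracting $h(j-1)$ uniformly yields a 2-ranking of $U$ using at least $\trn(U)\ge h$ distinct positive integers, whose maximum is therefore at least $h$; so some $v_i\in V(U_{a,i})$ has $\varphi(v_i)\ge hj$. The $hm+1$ vertices $v_0,\ldots,v_{hm}$ then all lie in $\{hj,\ldots,hm\}$, a set of size only $h(m-j)+1\le hm$, so pigeonhole produces distinct $i,i'$ with $\varphi(v_i)=\varphi(v_{i'})$. The 2-ranking condition applied to the path $v_i,a,v_{i'}$ forces $\varphi(a)>\varphi(v_i)\ge hj$, closing the induction. Instantiating the sub-claim at $j=m$ and $a_0$ gives $\varphi(a_0)\ge hm+1$, contradicting $\varphi(a_0)\le hm$.

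The main obstacle is the ``shift and extract'' step that converts the hypothesis $\trn(U)\ge h$ into the existence of an actual vertex $v_i\in V(U_{a,i})$ with $\varphi(v_i)\ge hj$, rather than merely the existence of $h$ distinct colours somewhere in $U_{a,i}$. This is the same move that underlies \cref{apex-graph}, but here it must be redone with a different offset $h(j-1)$ on each BFS layer, using only the elementary fact that $h$ distinct positive integers have maximum at least $h$. Everything else --- the pigeonhole, the path argument, and the unpacking of the recursive construction of $U^{(h,m)}$ --- is routine bookkeeping.
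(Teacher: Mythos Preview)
Your proof is correct and follows essentially the same approach as the paper's. The paper proves exactly the same layer-by-layer claim (indexed by $m-i$ rather than your $j$), but packages the ``shift and extract plus pigeonhole'' step as a separate lemma (\cref{apex-graph}) and then simply cites it with $k_0=(m-i-1)h+1$; you inline that argument instead, which is a cosmetic difference only.
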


\begin{proof}
    Let $k:=\trn(U^{(h,m)})$ and let $\varphi:V(U^{(h,m)})\to\{1,\ldots,k\}$ be a 2-ranking of $U^{(h,m)}$.  Let $L_0,\ldots,L_{m}$ be the partition of $V(U^{(h,m)})$ used in the definition of $U^{(h,m)}$. We will show by induction on $m-i$ that, for each $a\in L_{i}$, $\varphi(a)\ge(m-i)h+1$. Since $a_0\in L_0$, this gives $k\ge \varphi(a_0)\ge m h+1$.

    The base case of the induction, $m-i=0$, is trivial; it simply asserts that $\varphi(v)\ge 1$ for each $v\in L_m$.  Now assume $m-i > 0$.  For each $a\in L_i$ and each  $v\in N_{U^{(h,m)}}(a)\cap L_{i+1}$, the inductive hypothesis implies that $\varphi(v)\in\{(m-i-1)h+1,\ldots,k\}$.  The induced graph $G:=U^{(h,m)}[\{a\}\cup N_{U^{(h,m)}}(a)\cap L_{i+1}]$ is the graph described by \cref{apex-graph} with the value $k_0:=(m-i-1)h+1$.  The conclusion of \cref{apex-graph} therefore implies that $\varphi(a)\ge k_0+h=(m-i)h+1$, as required.
\end{proof}

\begin{lem}\label{boost-size}
    For any graph $U$ and any $h,m\in\N\setminus\{0\}$, $|U^{(h,m)}| \le (|U|hm)^{m}\cdot e^{O(1/h)}$.
\end{lem}

\begin{proof}
    It is easy to see that, for each $i\in \{0,\ldots,m\}$, $|L_i|=(|U|(hm+1))^i$.  Therefore,
    \begin{align*}
      |U^{h,m}|
        & = \sum_{i=0}^m |L_i| \\
        & = \sum_{i=0}^m (|U|(hm+1))^i \\
        %
        %
        & = (|U|(hm+1))^{m}\cdot (1+O(1/(|U|hm))) \\
        & \le (|U|hme^{1/hm})^{m}\cdot e^{O(1/(|U|hm))} \\
        & = (|U|hm)^{m}\cdot e^{1/h+O(1/(|U|hm))} = (|U|hm)^{m}\cdot e^{O(1/h)}
        \enspace . \qedhere
    \end{align*}
\end{proof}

\begin{lem}\label{boost-treewidth}
    For any graph $U$ and any integers $h,m\ge 1$, $\tw(U^{(h,m)})\le \tw(U)+1$.
\end{lem}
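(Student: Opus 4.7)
The plan is to exhibit a tree decomposition of $U^{(h,m)}$ of width $\tw(U)+1$ by following the natural tree-like recursive structure of the construction. The basic building block is the observation that for any apex $a \in L_{i-1}$ and any child copy $U_{a,j}$, the subgraph induced on $\{a\} \cup V(U_{a,j})$ has a tree decomposition of width $\tw(U)+1$: fix a width-$\tw(U)$ tree decomposition $\mathcal{T}_U$ of $U$, transfer it to $U_{a,j}$, and insert the apex $a$ into every bag. All internal edges of $U_{a,j}$ remain covered by $\mathcal{T}_U$, and every apex edge $av$ is covered because $a$ now lies in every bag while $v$ lies in some bag.

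I would prove the following strengthening by induction on $m-i$: for every $i \in \{0,\ldots,m\}$ and every $a \in L_i$, the subgraph $G_a$ of $U^{(h,m)}$ induced by $a$ together with all of $a$'s descendants admits a tree decomposition $\mathcal{T}(a)$ of width at most $\tw(U)+1$ that contains the singleton bag $\{a\}$. The base case $i=m$ is immediate: $G_a$ is the single vertex $a$ and $\mathcal{T}(a)$ is the single bag $\{a\}$. For the inductive step $i<m$, construct $\mathcal{T}_j$ as above for each $j \in \{0,\ldots,hm\}$; then for each $v \in V(U_{a,j})$ obtain $\mathcal{T}(v)$ from the inductive hypothesis and glue it to $\mathcal{T}_j$ by inserting an edge between the $\{v\}$-bag of $\mathcal{T}(v)$ and any bag of $\mathcal{T}_j$ containing $v$. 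Finally, join the $hm+1$ resulting trees into a single $\mathcal{T}(a)$ by introducing a fresh bag $\{a\}$ and attaching it by an edge to one bag of each augmented $\mathcal{T}_j$. Applying the claim to the unique vertex $a_0 \in L_0$ yields the lemma, since $G_{a_0}=U^{(h,m)}$.

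The only thing left is to verify that $\mathcal{T}(a)$ is a valid tree decomposition. The width is at most $\tw(U)+1$ because every bag either comes from an augmented $\mathcal{T}_j$ (of size at most $\tw(U)+2$), from a recursive $\mathcal{T}(v)$ (by induction), or is the new singleton $\{a\}$. Edge coverage is routine: internal edges of each $U_{a,j}$ and each apex edge $av$ are covered by $\mathcal{T}_j$, and all remaining edges lie in some $G_v$ and are covered by $\mathcal{T}(v)$. The subtree property holds at $a$ because every bag of every $\mathcal{T}_j$ contains $a$, the new $\{a\}$-bag links all pieces, and no bag of any descendant $\mathcal{T}(v)$ contains $a$ (since $a \notin V(G_v)$); and it holds at every other vertex $v \in V(U_{a,j})$ because the chosen attachment bag in $\mathcal{T}_j$ already contains $v$, joining the subtree inside $\mathcal{T}_j$ to the subtree inside $\mathcal{T}(v)$. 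The only point requiring care is precisely this choice of attachment bag; once made, there is no combinatorial obstacle and the argument is essentially bookkeeping.
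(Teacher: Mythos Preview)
Your proof is correct and takes essentially the same approach as the paper's: both augment a width-$\tw(U)$ tree decomposition of each copy $U_{a,j}$ by inserting the apex $a$ into every bag, then stitch the pieces together into a global tree decomposition. The only cosmetic difference is that the paper builds the decomposition top-down iteratively while you organize it as a bottom-up induction with an explicit singleton $\{a\}$-bag; your version actually spells out the verification that the paper leaves as ``straightforward.''
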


\begin{proof}
  Let $t:=\tw(U)$.
  Create a width-$(t+1)$ tree-decomposition $(B_x:x\in V(T))$ of $U^{(h,m)}$ as follows: Start with $T$ having a single node $z_0$ with $B_{z_0}=L_0$.  For each $i\in\{1,\ldots,m\}$, and each $a\in L_{i-1}$, find some bag $B_z$ in the current decomposition that contains $a$, take $hm+1$ disjoint copies $(A_x:x\in V(T_0)),\ldots,(A_x:x\in V(T_h))$ of some width-$t$ tree decomposition $\mathcal{T}$ of $U$.  For each $i\in\{0,\ldots,hm\}$, add an edge from $z$ to any node of the tree in $T_i$ and add $a$ to every bag in $T_i$.  It is straightforward to verify that this does, indeed, give a width-$(\tw(U)+1)$ tree-decomposition of $U^{(h,m)}$.
\end{proof}

\begin{lem}\label{treewidth-lower-bound}
    For each $t\in\N\setminus\{0\}$ and every integer $r\ge \tau(t)$, there exists a graph $G$, with $|G|\le (\log^{(t-1)}r)^{r + o(r)}\cdot e^{(t-1)r+o(r)}$, $\tw(G)\le t$, and $\trn(G)\ge r$.
\end{lem}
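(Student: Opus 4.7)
The plan is to induct on $t$, using the boost construction from \cref{apex-graph,boost,boost-treewidth,boost-size}. At each level one takes a graph $U$ realizing the required properties at level $t-1$, inflates it to $G := U^{(h,m)}$ for a carefully chosen pair $(h,m)$, and then reads off $\trn(G) \ge hm+1$ from \cref{boost}, $\tw(G) \le \tw(U)+1$ from \cref{boost-treewidth}, and the order estimate from \cref{boost-size}.

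For the base case $t=1$, take $U$ to be a single vertex (so $\trn(U)=1$ and $\tw(U)=0$) and set $h:=1$, $m:=r-1$, giving $G = U^{(1,r-1)}$, a complete $r$-ary tree of height $r-1$. The three boost lemmas yield $\trn(G)\ge r$, $\tw(G)\le 1$, and $|G|\le (r-1)^{r-1}(1+o(1))\le r^{r+o(r)}=(\log^{(0)} r)^{r+o(r)}$.

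For the inductive step, given $r\ge\tau(t)$, the key parameter choice is $r':=\lceil\log r\rceil$. Because $r\ge \tau(t)=e^{\tau(t-1)}$, we have $r'\ge\log r\ge\tau(t-1)$, so the inductive hypothesis supplies a graph $U$ with $\tw(U)\le t-1$, $\trn(U)\ge r'$, and $|U|\le (\log^{(t-2)} r')^{(t-1)r'+o(r')}$. Take $h:=r'$ and $m:=\lceil (r-1)/h\rceil$ and set $G:=U^{(h,m)}$. Then $\tw(G)\le t$ and $\trn(G)\ge hm+1\ge r$ are immediate from \cref{boost-treewidth,boost}. For the size bound, apply \cref{boost-size} and take logarithms to get
\begin{equation*}
  \log|G| \;\le\; m\bigl(\log|U|+\log h+\log m\bigr) + o(1).
\end{equation*}
Using the iterated-logarithm perturbation inequality from the Preliminaries with $x=\log r$ and $a=O(1)$ gives $\log^{(t-1)} r' = \log^{(t)} r\cdot(1+o(1))$, so $\log|U|\le (t-1)\log r\cdot\log^{(t)} r\cdot(1+o(1))$. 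Combined with $m=\Theta(r/\log r)$, the dominant term is $m\log|U|\le (t-1)\,r\,\log^{(t)} r\,(1+o(1))$, while $m\log h$ and $m\log m$ are both $O(r)$, which is $o(r\log^{(t)} r)$ since $\log^{(t)} r\to\infty$ for fixed $t$. Hence $\log|G|\le (tr+o(r))\log^{(t)} r$, i.e., $|G|\le(\log^{(t-1)} r)^{tr+o(r)}$, as required.

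The main obstacle is the parameter choice together with the bookkeeping of the iterated logarithms. The value $r'$ must be large enough to apply the inductive hypothesis and give $U$ enough ranking power to drive the boost, yet small enough that the factor $m=\Theta(r/r')$, when multiplied by $\log|U|=\tilde\Theta(r'\log^{(t-1)}r')$, contributes only $\Theta(r\log^{(t)} r)$ to $\log|G|$. Setting $r'=\lceil\log r\rceil$ is precisely what converts the $\log^{(t-2)}$ in the inductive size bound into the $\log^{(t-1)}$ in the target, and the fact that $r'$ is a rounded integer rather than $\log r$ exactly is absorbed cleanly by \eqref{logi-x-plus-a}.
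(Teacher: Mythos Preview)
Your proposal is correct and follows essentially the same approach as the paper: induction on $t$ with the boost construction $G=U^{(h,m)}$ and the parameter choice $h=\lceil\log r\rceil$, $m\approx r/\log r$, combined via \cref{boost,boost-treewidth,boost-size}. The only cosmetic differences are that the paper handles the base case by citing \cite{karpas.neiman.ea:on} directly rather than invoking the boost on a single vertex, and in the final size estimate the paper crudely bounds the $e^{r+o(r)}$ factor by $(\log^{(t-1)} r)^{r+o(r)}$ whereas you (equivalently) absorb the $O(r)$ additive term into $o(r\log^{(t)} r)$.
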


\begin{proof}
    The proof is by induction on $t$.  \citet{karpas.neiman.ea:on} have shown that the complete $(r+1)$-ary tree $T$ of height $r-1$ has $\trn(T)\ge r$. As the tree $T$ has size $\sum_{i=0}^{r-1} (r+1)^i \le r^r=(\log^{(0)}r)^{r}\cdot e^{0}$,  this establishes the base case $t=1$.

    Let $h:=\lceil\log r\rceil$ and $m:=\lceil r/\log r\rceil$ so that $hm\ge r$.  For $t>1$ we can apply the inductive hypothesis to obtain a graph $U$, with $\tw(U)\le t-1$, $|U|\le (\log^{(t-2)} h)^{(t-1)h+o(h)}$ and $\trn(U)\ge h$. Let $G:=U^{(h,m)}$.  By \cref{boost-treewidth}, $\tw(G)\le \tw(U)+1\le t$.  By \cref{boost}, $\trn(G)\ge hm+1 > hm \ge r$. By \cref{boost-size},
    \begin{align*}
        |G| & \le (|U|\cdot m\cdot h)^{m}\cdot e^{O(1/h)} \\
        & \le \left((\log^{(t-2)} h)^{h + o(h)}\cdot e^{(t-2)h+o(h)} \cdot mh\right)^{m}\cdot e^{O(1/h)} \\
        & = (\log^{(t-2)} h)^{r + o(r)}\cdot e^{(t-2)r+o(r)}\cdot e^{r+o(r)}\cdot e^{O(1/h)}
        & \text{(since $h=\lceil\log r\rceil$ and $m=\lceil r/\log r\rceil$)}\\
        & = (\log^{(t-1)} r)^{r + o(r)}\cdot e^{(t-1)r+o(r)}\enspace . & & \qedhere
    \end{align*}
\end{proof}

\begin{proof}[Proof of \cref{t-trees} (lower bound)]
    By \cref{treewidth-lower-bound} there exists an $n$-vertex graph $G$ with $n\le (\log^{(t-1)} r)^{r+o(r)}\cdot e^{(t-1)r+o(r)}$, $\tw(G)\le t$, and $\trn(G)\ge r$.  So, for any fixed $t\in\N$,
    \[
        \log n \le (r+o(r))\log^{(t)} r
        \le (1+o(1))\cdot \trn(G)\cdot \log^{(t)} \trn(G) \enspace .
    \]
    and attempting to solve for $\trn(G)$ shows that $\trn(G)\in \Omega(\log n/\log^{(t+1)} n)$.
\end{proof}

The lower bound construction in this section gives some guidance on how to obtain a matching upper bound for $\trn(G)$.  Specifically, for some node $a\in L_i$, the colouring of the component $X$ of $H[\{a\}\cup\bigcup_{j=i+1}^m L_j]$ that contains $a$ can create a lower bound on $\varphi(a)$.  Specifically, if two vertices $u,w\in V(X[L_{i+1}])$ receives the same colour $\phi$ then $\varphi(a)>\phi$.  This suggests that one should attempt to minimize the largest colour that is repeated in the colouring of $X[L_{i+1}]$.  Indeed, this is a guiding principle in our upper bound proof.

\section{Upper Bounds}
\label{upper-bounds}

In this section we prove asymptotically tight bounds for the worst-case number of colours needed for $\ell$-ranking simple treewidth-$t$ graphs, treewidth-$t$ graphs, planar graphs, and bounded genus graphs. In order to avoid complicating an already technically demanding proof, for the rest of this section we will treat $\ell$ and $t$ as fixed constants independent of $n$ and other parameters that are unbounded, so that $f(\ell,t)\in O(1)$ for any function $f:\N\times\N\to\N$.  At the end of this section, in \cref{dependence-on-ell} we discuss the dependence of $\lrn$ on $\ell$.

\subsection{Simple Treewidth-$t$ Graphs}
\label{simple-treewidth-section}

This section is devoted to proving the upper bound in \cref{simple-t-trees}:

\begin{namedtheorem}[\weirdref{simple-t-trees}{a}]\weirdlabel{simple-t-trees}{a}
    For fixed integers $\ell\ge 2$, $t\ge 1$, every $n$-vertex graph $H$ with $\stw(H)\le t$ has $\lrn(H)\in O(\log n/\log^{(t)} n)$.
\end{namedtheorem}

\weirdref{simple-t-trees}{a} immediately implies the upper bounds in \cref{planar,t-trees}:

\begin{proof}[Proof of \cref{planar} (upper bound)]
    By \cref{product-structure}, $G$ is a subgraph of $H\boxtimes K_3\boxtimes P$ where $|H|\le n$, $\stw(H)\le 3$, and $P$ is a path. Therefore,
    \begin{align*}
        \lrn(G) & \le \lrn(H\boxtimes K_3\boxtimes P)
                    & \text{(by \cref{product-structure})}\\
                & \le \lrn(H)\cdot \dlcn(K_3\boxtimes P)
                    & \text{(by \cref{product-lemma})} \\
                & \le 3(\ell+1)\cdot\lrn(H) & \text{(by \cref{dumb})} \\
                & \in O(\log n/\log^{(3)} n) & \text{(by \weirdref{simple-t-trees}{a}).} & \qedhere
    \end{align*}
\end{proof}

\begin{proof}[Proof of \cref{t-trees} (upper bound)]
    By \cref{simple-treewidth-vs-treewidth}, $\stw(H)\le\tw(H)+1\le t+1$ so, by   \weirdref{simple-t-trees}{a}, $\lrn(H)\in O(\log n/\log^{(t+1)}n)$.
\end{proof}

\weirdref{simple-t-trees}{a} also has the following corollary, which strengthens \cref{trees}:

\begin{cor}\label{outerplanar}
    For each fixed integer $\ell\ge 2$, every $n$-vertex outerplanar graph $G$ has $\lrn(G)\in O(\log n/\log^{(2)} n)$.
\end{cor}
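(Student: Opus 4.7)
The plan is to derive this corollary as an immediate consequence of the upper bound in \weirdref{simple-t-trees}{a} applied with $t=2$. The only thing to do is certify that outerplanar graphs fit into the simple treewidth framework at the right parameter.

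First I would invoke \cref{simple-small-cases}(ii), which states that a graph $H$ satisfies $\stw(H)\le 2$ if and only if $H$ is outerplanar. So for any $n$-vertex outerplanar graph $G$ we immediately have $\stw(G)\le 2$.

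Next I would apply \weirdref{simple-t-trees}{a} with $t=2$ to conclude $\lrn(G)\in O(\log n/\log^{(2)} n)$, which is exactly what the corollary asserts. There is no real obstacle here; the substance is entirely contained in \weirdref{simple-t-trees}{a} and in the characterization of outerplanar graphs by simple treewidth. The only ``work'' is to notice that $t=2$ feeds $\log^{(2)} n$ into the denominator, matching the bound claimed.
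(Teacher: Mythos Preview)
Your proposal is correct and matches the paper's proof essentially verbatim: invoke \cref{simple-small-cases}(ii) to get $\stw(G)\le 2$, then apply \weirdref{simple-t-trees}{a} with $t=2$.
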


\begin{proof}
    By \cref{simple-small-cases}{(ii)}, $\stw(G)\le 2$ so, by \weirdref{simple-t-trees}{a} $\lrn(G)\in O(\log n/\log^{(2)} n)$.
\end{proof}

The proof of \weirdref{simple-t-trees}{a} is the most technically demanding part of the paper and is the subject of most of this section.  Globally, the proof is by induction on the value of $t$, though it is easy to miss this, since it is spread over several lemmas. The case $t=1$ is easy: By \cref{simple-small-cases}(i), any graph of simple treewidth 1 is a contained in a path and therefore has an $\ell$-ranking using $\ell+1\in O(\log n/\log^{(1)} n) = O(1)$ colours.\footnote{\label{logarithmic-base-case}This is far from tight: Any path has an $\ell$-ranking using at most $k:=\floor{\log_2 \ell}+2$ colours.  Any path of length at most $2^{k-1}-1$ is easily coloured using colours in $\{1,\ldots,k-1\}$ using divide-and-conquer \cite{nesetril.ossona:tree-depth}.  To colour a path $v_0,\ldots,v_m$ with $m\ge 2^{k-1}$, set $\varphi(v_i):=k$ for each $i\equiv 0\pmod{2^{k-1}}$.  Then the set of uncoloured vertices induces a collection of paths each of length at most $2^{k-1}-1$ which can be coloured using colours in $\{1,\ldots,k-1\}$.}  In the proof of \cref{t-tree-slack}, below, we will apply \weirdref{simple-t-trees}{a} to graphs of simple treewidth $t-1$. \cref{t-tree-slack} is then used in the proof of \cref{t-tree-technical} which is used in the proof of \weirdref{simple-t-trees}{a} (a statement about graphs of simple treewidth $t$), at the end of this section.

\subsubsection{The Bread}

We begin with a few helper lemmas whose purpose is to show that, for a graph $H$ having a width-$t$ tree-decomposition $\mathcal{T}:=(B_x:x\in V(T))$, $\lrn(H)$ can be bounded by a function of $t$ and the number of branching (degree at least $3$) nodes in $T$. We begin with the simplest case: when $H$ has a width-$t$ path decomposition.

\begin{lem}\label{pathwidth}
    For any graph $G$, $\lrn(G)\le (\ell+1)\pw(G) + 1$.
\end{lem}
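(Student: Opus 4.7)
Plan: I would induct on $p:=\pw(G)$. The base case $p=0$ is trivial: an edgeless graph is $\ell$-ranked by a single colour. For the inductive step with $p\ge 1$, I take a smooth path decomposition $(B_1,\ldots,B_m)$ of width $p$ (so consecutive bags differ by one vertex added and one removed) and assign each vertex $v$ a channel $c(v)\in\{1,\ldots,p+1\}$ via the greedy smallest-available rule upon introduction, so that vertices sharing a bag receive distinct channels. Let $S:=\{v:c(v)=p+1\}$. Then $S$ is an independent set (two channel-$(p+1)$ vertices never share a bag), and removing $S$ reduces every bag by at most one, so $G-S$ has pathwidth at most $p-1$. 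The inductive hypothesis supplies an $\ell$-ranking $\psi$ of $G-S$ with colours from $\{1,\ldots,(\ell+1)(p-1)+1\}$.

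To extend, I would define $\varphi:V(G)\to\{1,\ldots,(\ell+1)p+1\}$ by setting $\varphi(v)=\psi(v)$ when $v\in V(G)\setminus S$, and for each $s\in S$, assigning $\varphi(s)$ from the fresh block $\{(\ell+1)(p-1)+2,\ldots,(\ell+1)p+1\}$ cyclically according to $i(s)\bmod(\ell+1)$, where $i(s)$ is the bag-index at which $s$ is introduced. The total palette has $(\ell+1)(p-1)+1+(\ell+1)=(\ell+1)p+1$ colours, matching the claimed bound.

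The verification that $\varphi$ is an $\ell$-ranking splits into three cases for a candidate bad $\ell$-path $u_0,\ldots,u_q$ with $\varphi(u_0)=\varphi(u_q)$. When both endpoints lie in $G-S$, inheritance from $\psi$ suffices (by \cref{induced-paths-only}-style reasoning, adding $S$-interior vertices of strictly larger colour only strengthens condition (ii)). When exactly one endpoint lies in $S$ and one in $G-S$, the two colour blocks are disjoint and condition (i) holds automatically. The remaining case, both endpoints $s_1,s_2\in S$ with $\varphi(s_1)=\varphi(s_2)$, is the main obstacle: since $S$ is independent, the vertices immediately following $s_1$ and preceding $s_2$ on the path lie in $G-S$, and the cyclic rule forces $|i(s_1)-i(s_2)|\ge \ell+1$ (and a multiple of $\ell+1$).

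The heart of the proof is showing that, in this last case, the $\ell$-path must contain an interior vertex with $\varphi$-value strictly exceeding $\varphi(s_1)$. The plan is to use the structure of a smooth path decomposition together with the fact that a length-$\le\ell$ path in $G$ cannot span the $\ell+1$-bag gap between $i(s_1)$ and $i(s_2)$ without some interior vertex's bag-range covering the introduction bag of the later endpoint. I would argue that this bridging interior vertex (chosen as the first path-vertex with $l(\cdot)\ge i(s_2)$) lies in $B_{i(s_2)}$ and, by carefully choosing how the cyclic $S$-colours are ordered relative to the channel rule, must receive a strictly larger colour than $\varphi(s_1)$. Making this bridging step airtight — in particular, controlling which channel the bridging vertex inhabits under the greedy assignment, and ruling out the case where the bridging vertex has a \emph{smaller} colour — is the technical hurdle, and is where the case analysis on whether the bridging vertex lies in $S$ or in $G-S$, and in the latter case on its channel, will have to be carried out in detail.
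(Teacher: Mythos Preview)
Your inductive framework (peel off a ``top layer'' $S$, colour it cyclically with a fresh block of $\ell+1$ colours, recurse on $G-S$) is exactly the paper's approach. The gap is in your choice of $S$. Taking $S$ to be the set of channel-$(p+1)$ vertices under greedy assignment does \emph{not} give you the property you need in the final case, and no amount of case analysis on the bridging vertex will rescue it.

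Here is a concrete failure with $p=1$ and $\ell=3$. Take the smooth width-$1$ path decomposition
\[
B_1=\{x_1\},\ B_2=\{x_1,a\},\ B_3=\{a\},\ B_4=\{a,x_2\},\ B_5=\{x_2\},\ B_6=\{x_2,b\},
\]
with edges $x_1a,\ ax_2,\ x_2b$. Greedy smallest-available gives $c(x_1)=1$, $c(a)=2$, $c(x_2)=1$, $c(b)=2$, so $S=\{a,b\}$ with $i(a)=2$ and $i(b)=6$. Since $2\equiv 6\pmod 4$, your rule assigns $\varphi(a)=\varphi(b)$, a colour in the top block $\{2,\dots,5\}$. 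Meanwhile $G-S=\{x_1,x_2\}$ has pathwidth $0$ and is coloured with the single colour $1$. Now $a,x_2,b$ is a path of length $2\le\ell$ whose only interior vertex $x_2$ has colour $1<\varphi(a)$. This violates the $\ell$-ranking condition, and your proposed bridging argument cannot help: the bridging vertex $x_2$ lies in $G-S$, so its colour comes from the inductive call and is strictly below \emph{every} colour you use on $S$, regardless of how you order the cyclic block.

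What the paper does instead is choose the top layer to be a sequence $v_1,\dots,v_m$ satisfying not only independence (your property) but also: for $|i-j|>1$ there is no length-$2$ path $v_i\,w\,v_j$ with $w\notin\{v_1,\dots,v_m\}$. This second property is exactly what rules out the bad path above, and it is what makes the cyclic colouring of the top layer work. Such a sequence always exists (think of greedily chaining intervals of maximum right endpoint so that their union covers the bag-path), and removing it drops the pathwidth by one. Once you replace your $S$ by this sequence, the rest of your argument goes through essentially verbatim---indeed the ``bad'' case with both endpoints in the top layer then reduces to an easy observation rather than a technical hurdle.
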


\begin{proof}
    The proof is by induction on $\pw(G)$.  The base case $\pw(G)=0$ is trivial: In this case, $G$ contains no edges and can be $\ell$-ranked with $1 = (\ell+1)\pw(G)+1$ colours.   For $\pw(G)\ge 1$, we may assume that $G$ is connected since, otherwise, we can colour each component of $G$ separately. Let $P:=x_1,\ldots,x_m$ be a path and let $(B_x:x\in V(P))$ be a $P$-decomposition of $G$ of width $\pw(G)$.

    Let $v_1,\ldots,v_p$ be a path of minimum length such that $v_1\in B_{x_0}$ and $v_p\in B_{x_m}$.  Since $v_1,\ldots,v_m$ is a path in $G$ with $v_1\in B_{x_0}$ and $v_r\in B_{x_m}$,  $|B_x\cap\{v_1,\ldots,v_p\}|\ge 1$, for each $x\in V(P)$.  Since $(B_x\setminus\{v_1,\ldots,v_p\}:x\in V(P))$ is a path decomposition of $G-\{v_1,\ldots,v_p\}$, this implies that $\pw(G-\{v_1,\ldots,v_p\})\le \pw(G)-1$.  We inductively colour $G-\{v_1,\ldots,v_p\}$ using colours $\{1,\ldots,(\ell+1)(\pw(G)-1)+1\}$ and then colour each $v_i$ with colour $((\ell+1)(\pw(G)-1)+2+i)\bmod (\ell+1)$.

    A standard property of shortest paths implies that, for each $i,j\in\{1,\ldots,p\}$, $|j-i|=d_G(v_i,v_j)$. In this colouring, $\varphi(v_i)=\varphi(v_j)$ implies that $j-i\equiv 0\pmod{\ell+1}$, for any $i,j\in\{1,\ldots,p\}$.  In particular, for distinct $i,j\in\{1,\ldots,p\}$, $\varphi(v_i)=\varphi(v_j)$ implies that $d_G(v_i,v_j) = |j-i|\ge\ell+1$.

    To see that the resulting colouring is an $\ell$-ranking, consider any path $X$ in $G$ of length at most $\ell$. If $V(X)\cap\{v_1,\ldots,v_p\}\neq\emptyset$ then each vertex $V(X)\cap\{v_1,\ldots,v_m\}$ has a unique colour, which is larger than any colour used by any vertex in $V(X)\setminus\{v_1,\ldots,v_m\}$.  If $V(X)\cap\{v_1,\ldots,v_p\}=\emptyset$ then $X\subseteq G-\{v_1,\ldots,v_p\}$ has a unique maximum colour by the inductive hypothesis.
\end{proof}

\begin{lem}\label{path-induced}
    Let $P=x_1,\ldots,x_m$ be a path and let $G$ be a graph that is edge-maximal with respect to a width-$t$ $P$-decomposition $\mathcal{P}:=(B_x:x\in V(P))$ of $G$.  Then there exists a set $U\subseteq V(G)$ such that
    \begin{compactenum}[(Z1)]
        \item $B_{x_1}\cup B_{x_m}\subseteq U$;\label{values-u}
        \item $|U|\le 2(\ell+1)^t + t$; and \label{size-u}
        \item for each non-trivial induced path $w_0,\ldots,w_q$ in $G$ of length at most $\ell$, $\{w_0,w_q\}\subseteq U$ implies that $\{w_1,\ldots,w_{q-1}\}\subseteq U$.\label{induced-u}
    \end{compactenum}
\end{lem}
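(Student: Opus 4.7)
The plan is to prove this by induction on the width $t$ of the path decomposition.

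For each $v\in V(G)$, let $\ell(v):=\min\{i:v\in B_{x_i}\}$ and $r(v):=\max\{i:v\in B_{x_i}\}$. Because $G$ is edge-maximal, two vertices are adjacent iff their intervals $[\ell(v),r(v)]$ overlap, so along any induced path $w_0,\ldots,w_q$ consecutive intervals overlap while non-consecutive intervals are disjoint (a ``staircase''). The base case $t=0$ is immediate: each bag is a single vertex, $G$ has no edges, and $U:=B_{x_1}\cup B_{x_m}$ of size at most $2$ satisfies (Z1)--(Z3) with (Z3) vacuous.

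For the inductive step I would choose a threshold $f$ of order $\ell^{t-1}$ and set $U:=L\cup R\cup M$, where $L:=\{v:\ell(v)\le f\}$, $R:=\{v:r(v)\ge m-f+1\}$ and $M$ is a small ``bridging'' set described below. Since any two vertices sharing a leftmost-bag index $i$ both lie in $B_{x_i}$, we have $|L|,|R|\le f(t+1)=O(\ell^{t-1})$, so $|U|\in O(\ell^{t})$ provided $|M|$ is controlled. Property (Z1) is immediate because vertices of $B_{x_1}$ have $\ell=1\le f$ and vertices of $B_{x_m}$ have $r=m\ge m-f+1$.

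For (Z3), given an induced path $w_0,\ldots,w_q$ of length at most $\ell$ with $w_0,w_q\in U$, the easy sub-cases are when both endpoints lie in $L$ or both lie in $R$. When both lie in $L$, applying \cref{induced-unimodal} to $\mathcal{P}$ rooted at $x_1$ yields $\ell(w_i)\le\max(\ell(w_0),\ell(w_q))\le f$ for every interior $i$, so $w_i\in L$. The case $w_0,w_q\in R$ is symmetric (root at $x_m$). The hard case is the mixed one: $w_0\in L\setminus R$ and $w_q\in R\setminus L$, so that $I(w_0)\subseteq[1,m-f]$ and $I(w_q)\subseteq[f+1,m]$. When $m\le 2f$ these two intervals may meet, making $w_0$ and $w_q$ adjacent and forcing $q=1$ (empty interior); otherwise the intervals are disjoint and each interior vertex of the path must have an interval that bridges the gap. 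I would define $M$ to be the set of vertices whose intervals intersect the ``middle window'' $[f+1,m-f]$ on both sides, together with any extra vertices needed to close $U$ under further applications of (Z3).

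The main obstacle is bounding $|M|$ while retaining the closure property. My plan here is to show that restricting $G$ to the vertices whose intervals cross the middle window, and then quotienting out the $O(1)$ ``persistent'' vertices in $L\cap R$ (i.e.\ the bag $B_{x_1}\cap B_{x_m}$-style spine), yields an induced subgraph that inherits a path decomposition of width strictly less than $t$, to which the inductive hypothesis applies with the same parameter $\ell$ but one lower exponent in $t$. This should produce $|M|\in O(\ell^{t-1})$ and, combined with $|L|+|R|=O(\ell^{t-1})$, give the claimed $|U|\in O(\ell^t)$. The most delicate technical step will be making the width-reduction precise: verifying that after removing a bounded number of ``spanning'' vertices the middle-window subgraph really has width $t-1$, and that bridging paths between $M$-vertices (and between $M$-vertices and $L\cup R$-vertices) of length at most $\ell$ are absorbed by the inductive construction applied to this sub-decomposition.
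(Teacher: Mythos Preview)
Your induction on $t$ with the interval-graph view, the base case $t=0$, and the handling of the $L$--$L$ and $R$--$R$ cases via \cref{induced-unimodal} are all fine and match the paper's setup. The inductive step, however, has a real gap in the width-reduction mechanism.

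You propose to drop the width on the middle window by deleting the ``persistent'' vertices of $L\cap R$, i.e.\ those whose intervals reach from the first $f$ bags to the last $f$. But such vertices need not exist. A width-$t$ path decomposition can have \emph{no} vertex whose interval covers the middle window: take for instance a long chain of copies of $K_{t+1}$ glued along $K_t$'s, so every interval has length $2$. Then $L\cap R=\varnothing$ (once $m>2f$), nothing is removed, and the middle window still has width $t$; the inductive hypothesis does not apply. Even when $L\cap R\neq\varnothing$, any middle bag containing no persistent vertex keeps size $t+1$. Your sketch also leaves the cross-closure (paths with one endpoint in $M$ and one in $L\cup R$) unresolved, and the seemingly arbitrary choice $f=\Theta(\ell^{t-1})$ is a symptom that the recursion has not yet found its natural shape.

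The paper manufactures the spine you are missing. It builds a \emph{greedy shortest path} $u_0,\ldots,u_p$ from $B_{x_1}$ to $B_{x_m}$: $u_0\in B_{x_1}$ maximises $r(u_0)$, and each $u_i\in B_{x_{r(u_{i-1})}}$ maximises $r(u_i)$. If $p>\ell$ the lemma is trivial (no induced path of length $\le\ell$ can join $B_{x_1}$ to $B_{x_m}$). Otherwise the nodes $y_i:=r(u_{i-1})$ cut $P$ into at most $\ell$ segments, and by greediness each $u_{i-1}$ lies in \emph{every} bag of segment $i$; deleting it from those bags genuinely lowers the width to $t-1$ there. The inductive hypothesis is then applied to each of the $\le\ell$ segments, giving $U=\{u_0,\ldots,u_p\}\cup\bigcup_i U_i'$ and the recurrence $f(t)\le(\ell+1)+\ell\cdot f(t-1)=O(\ell^t)$. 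Because each segment boundary $B_{y_i}$ is a clique, the cross-segment closure you flagged as ``most delicate'' falls out: any induced subpath that enters a new segment must do so through a pinched vertex already in $U$.
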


\begin{proof}
    To eliminate a level of subscripts, let $x_i:=i$ for each $i\in\{1,\ldots,m\}$. The proof is by induction on $t$. In the base case, $t=0$, $G$ has no edges and therefore no non-trivial paths, so Z\ref{induced-u} is vacuous. The lemma is satisfied by taking $U:=B_{1}\cup B_{m}$.  This certainly satisfies Z\ref{values-u} and satisfies Z\ref{size-u} since $|U|\le 2= 2(\ell+1)^0+0$.

    Now assume that $t\ge 1$. If $G$ is not connected, then $B_{1}$ and $B_{m}$ are in different components of $G$.  In this case we choose $U:=B_{1}\cup B_{m}$. This certainly satisfies Z\ref{values-u}.  This satisfies Z\ref{size-u} since $|U|\le|B_1|+|B_m|\le 2t+2 \le 2(\ell+1)^t+t$ because $2(\ell+1)^t \ge 2^{t+1}> t+2$ for all $t\ge 1$.  This also satisfies Z\ref{induced-u} because the only paths $w_0,\ldots,w_q$ that need consideration have $\{w_0,w_q\}\subseteq B_1$ or $\{w_0,w_q\}\subseteq B_m$.  Since we only consider induced paths in $G$ and $G$ is edge-maximal with respect to $\mathcal{P}$, this implies that $q=1$, so $w_0,\ldots,w_q=w_0w_q$ consists of a single edge and $\{w_0,w_q\}\subseteq U$.

    We may now assume that $G$ is connected. For each $v\in V(G)$, let $r(v):=\max\{i\in\{1,\ldots,m\}:v\in B_i\}$.  Let $y_0:=1$ and $i:=1$.  As long as $y_i\neq m$, choose a vertex $u_i\in B_{y_i}$ that maximizes $y_{i+1}:=r(u_i)$ and increment $i$. This produces a path $u_0,\ldots,u_p$ in $G$ and a sequence of nodes $y_0,\ldots,y_{p+1}$ in $V(P)$.  It is easy to verify that $u_0,\ldots,u_p$ is a shortest path from $B_{1}$ to $B_{m}$, i.e., $p=\min\{d_H(w_0,w_q): w_0\in B_1,\, w_q\in B_m\}$.  Therefore, if $p>\ell$, the lemma is again trivially satisfied by taking $U:=B_{1}\cup B_{m}$.
    %
    %
    %
    %
    %
    %

    Now assume that $p\le\ell$.  For each $i\in\{1,\ldots,p+1\}$, define the path $P_i=y_{i-1},\ldots,y_i$, let $\mathcal{P}_i:=(B_x\setminus\{u_{i-1}\}:x\in V(P_i))$, and let $G_i:=G[B_x\setminus\{u_{i-1}\}:x\in V(P_i)]$.  Then $G_i$ is edge-maximal with respect to $\mathcal{P}_i$ and $\mathcal{P}_i$ has width at most $t-1$. For each $i\in\{1,\ldots,p\}$, we apply the lemma inductively to $G_i$ and $\mathcal{P}_i$ to obtain a set $U_i$.  Let $U:=\{u_0,\ldots,u_p\}\cup\bigcup_{i=1}^{p+1} U_i$.  Observe that, by induction, $\bigcup_{i=1}^{p+1}U_i\supseteq\bigcup_{i=1}^{p+1}(B_{y_{i-1}}\cup B_{y_i}\setminus\{u_{i-1}\})$, so $U\supseteq\bigcup_{i=0}^{p+1} B_{y_i}$.


    In particular, $U$ contains $B_{y_0}=B_1$ and $B_{y_{p+1}}=B_m$, so $U$ satisfies (Z\ref{values-u}).
    Now observe that $|U|\le |B_1| + |\{u_1,\ldots,u_p\}|+ |\bigcup_{i=1}^{p+1}|U_i\setminus B_{y_{i-1}}|$.  Since $p\le \ell$, this implies that $|U\setminus B_1|$ satisfies the recurrence
    \[
       |U\setminus B_1|\le f(t) \le \begin{cases}
                  1 & \text{if $t=0$} \\
                  \ell + (\ell+1)\cdot f(t-1) & \text{otherwise.}
                \end{cases}
    \]
    This recurrence resolves to $f(t)\le 2(\ell+1)^t - 1$.  Therefore $|U|\le f(t)+|B_{1}| \le 2(\ell+1)^t + t$ so this satisfies Z\ref{size-u}.  All that remains is to show that $U$ satisfies (Z\ref{induced-u}).  Consider some induced path $w_0,\ldots,w_q$ in $G$ of length at most $\ell$ with $\{w_0,w_q\}\subseteq U$.  We want to show that $\{w_1,\ldots,w_{q-1}\}\subseteq U$.

    We say that a vertex $w_i$ is \defin{pinched} if $w_i\in B_{y_j}$ for some $j\in\{0,\ldots,p+1\}$. (Note that each of $u_0,\ldots,u_p$ is pinched.) The edges of $w_0,\ldots,w_q$ can be partitioned into subpaths of the form $w_{a},\ldots,w_{b}$ where
    \begin{inparaenum}[(i)]
        \item $w_a$ is pinched;
        \item $w_b$ is pinched; and
        \item none of $w_{a+1},\ldots,w_{b-1}$ are pinched.
    \end{inparaenum}
    First note that, for any such subpath $w_a,\ldots,w_b$, $\{w_a,w_b\}\subseteq U$, so we need only show that $\{w_{a+1},\ldots,w_{b-1}\}\subseteq U$.  There are three cases to consider:

    \begin{compactenum}
       \item $\{w_a,w_b\}\subseteq B_{y_j}$ for some $j\in\{0,\ldots,p+1\}$.  Since $G$ is edge-maximal with respect to $\mathcal{P}$, this implies  that $w_aw_b\in E(G)$. Since $w_a,\ldots,w_b$ is an induced path in $G$, $b=a+1$ and there is nothing to prove.

       \item $\{w_a,w_b\}\subseteq V(G_j)$ for some $j\in\{1,\ldots,p\}$ (and not the preceding case).
       Since
       none of $w_{a+1},\ldots,w_{b-1}$ are pinched, this implies that $\{w_a,\ldots,w_b\}\subseteq V(G_j)$.  Therefore, $w_a,\ldots,w_b$ is an induced path in $G_j$ so, by the inductive hypothesis, $\{w_{a+1},\ldots,w_{b-1}\}\subseteq U_j\subseteq U$.

       \item $w_a = u_{j-1}$ for some $j\in\{1,\ldots,p+1\}$ and $w_b\in V(G_{j})$.  In this case, $w_a=u_{j-1}\in B_{k}$ for each $k\in\{y_{j-1},\ldots,y_{j}\}$ and $w_b\in B_{k}$ for at least one $k\in\{y_{j-1},\ldots,y_{j}\}$.  By edge maximality, $w_aw_b\in E(G)$, so $b=a+1$ and there is nothing to prove.  \qedhere
       %
       %
       %
       %
    \end{compactenum}
\end{proof}

A node $x$ in a rooted tree $T$ is a \defin{branching node} if $x$ has at least two children.  Let $\Lambda(T)$ denote the set of branching nodes in a tree $T$.  Let $H$ be a graph that is edge-maximal with respect to some tree decomposition $\mathcal{T}:=(B_x:x\in V(T))$ of width at most $t$. We define the \defin{$(\mathcal{T},\ell)$-skeleton} $\hat{H}$ of $H$ as the induced subgraph of $H$ whose vertex set is defined as follows:
\begin{compactenum}
    \item $V(\hat{H})$ contains $\bigcup_{x\in\Lambda(T)} B_x$.

    \item For each pair of nodes $x,y\in\Lambda(T)$ such that the path $P_T(x,y)$ from $x$ to $y$ in $T$ has no branching node in its interior, $V(\hat{H})$ contains the set $U_{xy}\subseteq V(H)$ obtained by applying \cref{path-induced} to the graph $G_{xy}:=H[\bigcup_{z\in V(P_T(x,y))} B_z]$ with the path decomposition $\mathcal{P}_{xy}:=(B_z:z\in P_T(x,y))$.  (Note that $G_{xy}$ and $\mathcal{P}_{xy}$ satisfy the edge-maximality required for \cref{path-induced} since $H$ is edge-maximal with respect to $\mathcal{T}$.)
\end{compactenum}

\begin{lem}\label{skeleton-paths}
    Let $w_0,\ldots,w_q$ be an induced path in $H$ of length at most $\ell$ and with endpoints $\{w_0,w_q\}\subseteq V(\hat{H})$. Then $\{w_1,\ldots,w_{q-1}\}\subseteq V(\hat{H})$.
\end{lem}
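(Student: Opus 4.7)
My plan is to reduce to applying property (Z3) of \cref{path-induced} to well-chosen sub-paths of $w_0,\ldots,w_q$. For each internal vertex $w_i$ (with $1\le i \le q-1$), I would first consider the subtree $T_i := \{z \in V(T) : w_i \in B_z\}$. If $T_i$ contains any branching node $z \in \Lambda(T)$ then $w_i \in B_z \subseteq V(\hat{H})$ and there is nothing further to show. Otherwise $T_i$ is branchless and therefore lies entirely in the interior of a unique branchless sub-path $P_T(x,y)$ between branching nodes $x,y \in \Lambda(T)$, so in particular $w_i \in V(G_{xy})$; my goal becomes to prove that $w_i \in U_{xy}$.

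To do this I would take $[a,b]$ to be the maximal interval of indices containing $i$ with $\{w_a,\ldots,w_b\} \subseteq V(G_{xy})$ and show that $\{w_a,w_b\} \subseteq U_{xy}$. By symmetry I only need to handle $w_a$. If $a > 0$ then $w_{a-1} \notin V(G_{xy})$; combining this with the fact (from edge-maximality of $H$) that $w_{a-1}$ and $w_a$ share some bag $B_z$ with $z \notin V(P_T(x,y))$, and with the existence of some $z' \in T_a \cap V(P_T(x,y))$, I would argue that the unique $T$-path from $z'$ to $z$ lies inside $T_a$ and must exit $P_T(x,y)$ through $x$ or $y$, since no interior node of $P_T(x,y)$ has a $T$-neighbour off the path (any such node would be a branching node of $T$). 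Hence $w_a \in B_x \cup B_y$, which by property (Z1) of \cref{path-induced} is contained in $U_{xy}$. If instead $a = 0$, I would use that $w_0 \in V(\hat{H})$ places $w_0$ either in some branching-node bag $B_{z^*}$ or in another set $U_{x'y'}$; in the latter case, $T_0$ meets two distinct branchless sub-paths and hence contains a branching node, reducing to the former. From there the same tree-decomposition argument, applied to $T_0$ containing both the branching node $z^*$ and some node of $V(P_T(x,y))$, places $w_0$ in $B_x \cup B_y \subseteq U_{xy}$ (trivially when $z^* \in \{x,y\}$).

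With $\{w_a,w_b\} \subseteq U_{xy}$ in hand, I would observe that $w_a,\ldots,w_b$ is an induced path of length at most $\ell$ in the induced subgraph $G_{xy}$ of $H$ (since it is a sub-path of our original induced $H$-path all of whose vertices lie in $V(G_{xy})$), and apply (Z3) of \cref{path-induced} to conclude $\{w_{a+1},\ldots,w_{b-1}\} \subseteq U_{xy}$; the degenerate case $a = b$ is already covered, because then $w_i = w_a \in U_{xy}$. Either way, $w_i \in V(\hat{H})$. The main technical obstacle will be the tree-decomposition bookkeeping needed to deposit the sub-path endpoints in $U_{xy}$, especially at the boundary cases $a = 0$ and $b = q$; once that is established, (Z3) of \cref{path-induced} dispatches the remainder of the argument mechanically.
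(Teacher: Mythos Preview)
Your approach is essentially the paper's: cut the induced path at vertices lying in branching-node bags, locate each resulting subpath inside some $G_{xy}$ with endpoints in $U_{xy}$, and invoke property~(Z\ref{induced-u}) of \cref{path-induced}; your boundary bookkeeping for the cases $a=0$ and $b=q$ is exactly what the paper's very terse proof leaves implicit. One small point you should address: your claim that a branchless $T_i$ lies on some $P_T(x,y)$ between branching nodes $x,y\in\Lambda(T)$ ignores the possibility that $T_i$ sits on a ``tail'' of $T$ toward a leaf or the root, where no $U_{xy}$ is defined---this case is in fact excluded (the induced path would have to enter and leave that tail through the clique $B_x$ at its base, forcing two non-consecutive path vertices to be adjacent), but it deserves a sentence, and the paper's proof glosses over the same issue.
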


\begin{proof}
    Partition the edges of $w_0,\ldots,w_q$ into paths of the form $w_a,\ldots,w_b$ such that
    \begin{inparaenum}[(i)]
        \item $a=0$ or $w_a\in\bigcup_{x\in \Lambda(T)} B_x$;
        \item $b=q$ or $w_b\in\bigcup_{x\in \Lambda(T)} B_x$; and
        \item none of $w_{a+1},\ldots,w_{b-1}$ are contained $\bigcup_{x\in \Lambda(T)} B_x$.
    \end{inparaenum}
    This means that $w_a,\ldots,w_b$ is an induced path in $G_{xy}$ for some $x,y\in \Lambda(H)$ and $\{w_a,w_b\}\subseteq U_{xy}$.  Therefore, by \cref{path-induced} $\{w_{a+1},\ldots,w_{b-1}\}\subseteq U_{xy}\subseteq U$, as required.
\end{proof}

\begin{lem}\label{skeleton-size}
    $|V(\hat{H})|\le (|\Lambda(T)|-1)\cdot(2(\ell+1)^t+t)$.
\end{lem}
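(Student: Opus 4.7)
The plan is to bound the two contributions to $V(\hat{H})$ separately and then add them. By definition, $V(\hat{H}) = \bigcup_{x\in\Lambda(T)} B_x \,\cup\, \bigcup_{\{x,y\}} U_{xy}$, where the second union is over unordered pairs $\{x,y\}\subseteq\Lambda(T)$ of branching nodes whose connecting path in $T$ has no branching node in its interior. The first union clearly contributes at most $|\Lambda(T)|\cdot(t+1)\in O(|\Lambda(T)|)$ vertices, since each bag has size at most $t+1$ and $t$ is constant. By part (Z\ref{size-u}) of \cref{path-induced}, each $U_{xy}$ has size $O(\ell^t)$, so the remaining work is to show that the number of such ``consecutive'' pairs $\{x,y\}$ of branching nodes is at most $|\Lambda(T)|$.

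To count consecutive pairs, I would exploit the fact that $T$ is rooted. First I would observe that for any consecutive pair $\{x,y\}$, one of $x,y$ is a $T$-ancestor of the other: if not, then the least common ancestor $z$ of $x$ and $y$ lies strictly between them on the path $P_T(x,y)$ and has at least two children (the ones leading toward $x$ and toward $y$), so $z$ would be a branching node in the interior of $P_T(x,y)$, a contradiction. Therefore every consecutive pair has the form $(p,y)$ where $p$ is a strict $T$-ancestor of $y$ and no branching node lies strictly between them. For each branching node $y$ other than the highest branching node on its root-to-$y$ path, such a $p$ is uniquely determined as the nearest branching $T$-ancestor of $y$; for $y$'s whose root-to-$y$ path contains no other branching node, there is no such pair. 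Hence the map $\{x,y\}\mapsto y$ (the deeper endpoint) is an injection from consecutive pairs into $\Lambda(T)$, giving at most $|\Lambda(T)|$ consecutive pairs.

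Combining the two bounds yields
\[
    |V(\hat{H})| \;\le\; |\Lambda(T)|\cdot(t+1) \;+\; |\Lambda(T)|\cdot O(\ell^t) \;=\; |\Lambda(T)|\cdot O(\ell^t),
\]
as required, using that $t$ is a fixed constant and $\ell\ge 2$ so $t+1\in O(\ell^t)$.

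There is no real obstacle here; the only subtlety is the argument that every consecutive pair of branching nodes lies on a single ancestor chain, which rules out pairs on different branches from being counted and thus pins down the count to $O(|\Lambda(T)|)$ rather than something like the number of incidences of branching nodes with edges of $T$.
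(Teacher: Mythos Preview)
Your proof is correct and follows essentially the same approach as the paper. The paper's proof is a one-liner: it cites (Z\ref{size-u}) of \cref{path-induced} and asserts that there are exactly $|\Lambda(T)|-1$ consecutive pairs; you instead prove the slightly weaker bound of at most $|\Lambda(T)|$ pairs via the injection to the deeper endpoint, and you handle the contribution of $\bigcup_{x\in\Lambda(T)}B_x$ separately rather than absorbing it via (Z\ref{values-u}). Both routes are equivalent for the stated $O(\ell^t)$ bound.
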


\begin{proof}
    This follows from \cref{path-induced} (Z\ref{size-u}) and the fact that there are $|\Lambda(T)|-1$ distinct pairs $x,y\in\Lambda(T)$ such that $P_T(x,y)$ has no internal nodes in $\Lambda(T)$.
\end{proof}

\begin{lem}\label{skeleton-colour}
    Let $H$ be a graph that is edge-maximal with respect to some width-$t$ tree decomposition $\mathcal{T}:=(B_x:x\in V(T))$ of $H$ that defines a $(\mathcal{T},\ell)$-skeleton $\hat{H}$, of $H$.  Then $\lrn(H)\le \lrn(\hat{H}) + (\ell+1)t+1$.
\end{lem}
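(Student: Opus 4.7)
The plan is to colour $V(H)\setminus V(\hat H)$ with a constant-size palette of ``low'' colours and then stack a given $\ell$-ranking of $\hat H$ on top, using a disjoint palette of ``high'' colours.  Concretely, I would set $c_2:=(\ell+1)t+1$, take an $\ell$-ranking $\psi:V(H)\setminus V(\hat H)\to\{1,\ldots,c_2\}$ produced by \cref{pathwidth}, take an $\ell$-ranking $\hat\varphi:V(\hat H)\to\{1,\ldots,\lrn(\hat H)\}$ of $\hat H$, and define
\[
    \varphi(v):=\begin{cases}\psi(v) & \text{if $v\notin V(\hat H)$,}\\ c_2+\hat\varphi(v) & \text{if $v\in V(\hat H)$.}\end{cases}
\]
This uses $\lrn(\hat H)+c_2=\lrn(\hat H)+O(1)\in O(\lrn(\hat H))$ colours in total.

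The first step is to justify the application of \cref{pathwidth}, which requires $\pw(H-V(\hat H))\le t$.  Every connected component of $H-V(\hat H)$ is contained in some $G_{xy}$ corresponding to a segment $P_T(x,y)$ between consecutive branching nodes, because a non-skeleton vertex cannot belong to any bag $B_z$ with $z\in\Lambda(T)$ and its bag-subtree is therefore confined to the interior of a single segment.  Since $\mathcal{T}$ has width $t$, the path decomposition $\mathcal{P}_{xy}$ has width $t$; deleting skeleton vertices from its bags gives a path decomposition of the component of width at most $t$.  Pathwidth is preserved under disjoint unions, so the global bound follows.

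The second step is to verify that $\varphi$ is an $\ell$-ranking of $H$.  By \cref{induced-paths-only}, I only need to consider an induced path $w_0,\ldots,w_q$ in $H$ of length at most $\ell$ with $\varphi(w_0)=\varphi(w_q)$ and show that $\varphi(w_0)<\max_i\varphi(w_i)$.  Since the two palettes used for skeleton and non-skeleton vertices are disjoint, either both endpoints lie in $\hat H$ or neither does.  In the first case, \cref{skeleton-paths} forces every $w_i$ to lie in $\hat H$, so the path is also induced in $\hat H$ and the strict inequality follows from $\hat\varphi$ being an $\ell$-ranking of $\hat H$.  In the second case, if some intermediate $w_i$ lies in $\hat H$ then $\varphi(w_i)>c_2\ge\varphi(w_0)$ and we are done, while if every $w_i$ is non-skeleton then the path is induced in $H-V(\hat H)$, so the inequality follows from $\psi$.

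I do not anticipate a serious obstacle here, since the heavy lifting has already been offloaded to \cref{path-induced} and \cref{skeleton-paths}, which together guarantee that induced paths of length at most $\ell$ cannot sneak from the skeleton back to the skeleton through non-skeleton vertices.  The only point demanding care is the pathwidth claim for $H-V(\hat H)$, and that reduces to the observation that any non-skeleton vertex has its bag-subtree strictly confined between two consecutive branching nodes of $T$.
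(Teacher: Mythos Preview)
Your proposal is correct and follows essentially the same approach as the paper: colour $H-V(\hat H)$ with $(\ell+1)t+1$ low colours via \cref{pathwidth} (using that non-skeleton vertices avoid all bags at branching nodes, so $T-\Lambda(T)$ yields a width-$t$ path decomposition of $H-V(\hat H)$), shift an $\ell$-ranking of $\hat H$ above those colours, and verify the result by the same three-way case split on the endpoints, invoking \cref{skeleton-paths} when both endpoints lie in $\hat H$.
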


\begin{proof}
    Let $\varphi:V(\hat{H})\to \{(\ell+1)t+2,\ldots,\lrn(\hat{H})+(\ell+1)t+1\}$ be an $\ell$-ranking of $\hat{H}$. The graph $P:=T-\Lambda(T)$ consists of disjoint paths and, for any edge $vw\in E(H-V(\hat{H})))$ there is a node $x\in V(P)$ such that $\{v,w\}\subseteq B_x$.  Therefore $(B_x:x\in V(P))$ is a width-$t$ path decomposition of $H-V(\hat{H})$, so $\pw(H-V(\hat{H}))\le t$.  Therefore, by \cref{pathwidth}, $H-V(\hat{H})$ has an $\ell$-ranking $\varphi:V(H-V(\hat{H}))\to\{1,\ldots,(\ell+1)t+1\}$.  This gives a colouring $\varphi: V(H)\to\{1,\ldots,\lrn(\hat{H})+(\ell+1)t+1\}$.

    We claim that $\varphi$ is an $\ell$-ranking of $\hat{H}$.  To see this, consider some induced path $u_0,\ldots,u_p$ in $H$ with $\varphi(u_0)=\varphi(u_p)$.  We must show that $\varphi(u_i)>\varphi(u_0)$ for some $i\in\{1,\ldots,p-1\}$.
    Since $\rho(u_0)=\rho(u_p)$ and the colours used to colour $\hat{H}$ are distinct from those used to colour $H-V(\hat{H})$, there are only two cases to consider:
    \begin{compactenum}
        \item $\{u_0,u_p\}\subseteq V(H-V(\hat{H}))$. There are two subcases:
        \begin{compactenum}
            \item $\{u_1,\ldots,u_{r-1}\}\subseteq V(H-V(\hat{H}))$.  In this case, $u_0,\ldots,u_p$ is a path in $H-V(\hat{H})$, so  $\varphi(u_0)<\varphi(u_i)$ for some $i\in\{1,\ldots,p-1\}$ since \cref{pathwidth} ensures that $\varphi$ is an $\ell$-ranking of $H-V(\hat{H})$.

            \item $u_i\in V(\hat{H})$ for some $i\in\{1,\ldots,p-1\}$. In this case, $\varphi(u_0)\le (\ell+1)t+1 < (\ell+1)t+2 \le \varphi(u_i)$.
        \end{compactenum}
        \item $\{u_0,u_p\}\subseteq V(\hat{H})$.
        By \cref{skeleton-paths}  $\{u_0,\ldots,u_p\}\subseteq V(\hat{H})$, so  $\varphi(u_0)<\varphi(u_i)$ for some $i\in\{1,\ldots,p-1\}$ since $\varphi$ is an $\ell$-ranking of $\hat{H}$. \qedhere
    \end{compactenum}
\end{proof}

\subsubsection{The Meat}

Now we arrive at the combinatorial core of the proof. The main idea is to cover $H$ with a sequence of overlapping blocks, each of which consists of $\ell+2$ consecutive BFS layers.  Each pair of consecutive blocks overlaps in a single BFS layer. To convey some intuition about the proof, we first present it for trees.

\paragraph{The Proof for Trees.}

We will now show that, for any $\ell\in\N\setminus\{0\}$, any $k\ge 3$, and any tree $T$ with $n \le k^k$ vertices has a $\ell$-ranking $\varphi:V(T)\to\{0,\ldots,\floor{ak}\}$, for some value of $a$ that depends only on $\ell$.  Observe that a value of $k\in O(\log n/\log\log n)$ is sufficient to satisfy the condition $n\le k^k$, so this already proves that $\lrn(T)\in O(\log n/\log\log n)$, which extends the result of \citet{karpas.neiman.ea:on} that $\trn(T)\in O(\log n/\log\log n)$.

Let $r$ be the root of $T$, let $h$ be the height of $T$ and, for each $i\in\{0,\ldots,h\}$, let $L_i$ denote the set of vertices in $T$ that have depth $i$.  For each vertex $v$ in $T$, let $T_v$ be the subtree of $T$ that contains $v$ and all its descendants, let $n_v:=|T_v|$ be the number of vertices in $T_v$, and let $c_v$ be the solution to the equation $k^k/c_v^{c_v} = n_v$.  In other words, $c_v:=\gamma_{0,k}(n_v)$.  Let $c:=c_r$, so $n=k^k/c^c$.  We will prove the following stronger result (see \cref{tree_fig}):

\begin{figure}
  \begin{center}
    \includegraphics{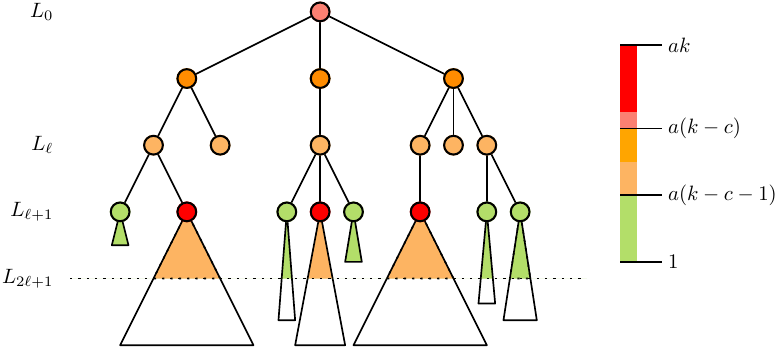}
  \end{center}
  \caption{Finding an $\ell$-ranking of a tree of size $n=k^k/c^c$.  The root gets colour $\floor{a(k-c)}+1$. Vertices in $L_1,\ldots,L_{\ell}$ get colours in $[a(k-c-1),a(k-c)]$.  Dangerous vertices in $L_{\ell+1}$ get unique colours in $[\floor{a(k-c)}+2,ak]$.  Harmless vertices get colours in $[1,a(k-c)]$.}
  \label{tree_fig}
\end{figure}

\begin{quote}
    $T$ has an $\ell$-ranking $\varphi:V(T)\to\{1,\ldots,\floor{ak}\}$ such that $\varphi(r)=\floor{a(k-c)}+1$ and $\varphi(v)\le \floor{a(k-c)}$ for each vertex $v\in\bigcup_{i=1}^\ell L_i$.
\end{quote}

We proceed by induction on $n$. In the base case, $n=1$, so $c=k$.  We set $\varphi(r):=\floor{a(k-c)}+1=1$ and we are done. Now suppose $n\ge 2$.  For each $v\in L_{\ell+1}$, apply the inductive hypothesis on the subtree $T_v$ to obtain an $\ell$-ranking $\varphi$ of the forest $F:=T[\bigcup_{i=\ell+1}^h L_i]$.

We say that a vertex $v\in L_{\ell+1}$ is \defin{dangerous} if $T_v$ has size $n_v> k^k/(c+1)^{c+1}$ and \defin{harmless} otherwise.  Observe that the number $x$ of dangerous vertices must satisfy $xk^k/(c+1)^{c+1} < n = k^k/c^c$, so
\[
  x \le \frac{(c+1)^{c+1}}{c^c} = \left(\frac{c+1}{c}\right)^c\cdot(c+1)
  = (1+1/c)^c\cdot (c+1) < e\cdot (c+1) \enspace .
\]
We modify $\varphi$ by assigning a unique colour $\varphi(v)\in\{\floor{a(k-c)+2,\ldots,\floor{ak}}$ to each dangerous vertex.  The number of colours available for dangerous vertices is at least $ac-2$ and the number of dangerous vertices is at most $e(c+1)$, so this is always possible, provided that
$a \ge 8 > e + (e+2)/c$.  Observe that this modification can only increase the value of $\varphi(v)$, since $c_v \ge c$ and the inductive hypothesis ensures that, prior to this modification $\varphi(v)=\floor{a(k-c_v)}+1 < \floor{a(k-c)}+2$.  This implies that the modified colouring is still a $2$-ranking of $F$ since, by the inductive hypothesis, $\varphi(v)$ is the unique largest colour in $\bigcup_{j=\ell+1}^{2\ell+1}(V(T_v)\cap L_{j})$.

Next observe that each harmless vertex $w\in L_{\ell+1}$ has $c_w\ge c+1$, so $\varphi(w) = \floor{a(k-c_w)}+1 \le \floor{a(k-c-1)}+1 < \floor{a(k-c)}-\ell$ for any $a\ge \ell+2$. Extending $\varphi$ to the vertices in $L_0,\ldots,L_{\ell}$ is now straightforward:  For each $i\in\{0,\ldots,\ell\}$ and each $v\in L_i$, set $\varphi(v):=\floor{a(k-c)}+1-i$.

It is straightforward to check that the resulting colouring $\varphi:V(T)\to\{1,\ldots,\lfloor ak\rfloor\}$ satisfies the stronger conditions of the inductive hypothesis.  To see why $\varphi$ is an $\ell$-ranking of $T$, consider any path $P$ of length at most $\ell$.
\begin{compactitem}
  \item If $P$ is entirely contained in $T_v$ for some $v\in L_{\ell+1}$, then $P$ has a unique maximum colour by the inductive hypothesis.

  \item Otherwise, if $P$ contains no dangerous vertices then the unique maximum colour in $P$ occurs at the unique vertex of $P$ that has minimum $T$-depth.

  \item Otherwise, $P$ contains one or two dangerous vertices that have distinct colours and the largest of these is larger than any other colour that appears in $P$.
\end{compactitem}
This completes the proof for trees.  With some small changes, the proof given above also works for simple $2$-trees, i.e., maximal outerplanar graphs.  The differences are as follows:
\begin{compactitem}
    \item For a simple $2$-tree $H$ we use a BFS layering $L_0,\ldots,L_h$ where $L_0$ may contain a single vertex or both endpoints of an edge of $H$, and the  vertices in $L_0$ will receive colours in $\{\floor{a(k-c)}+1,\floor{a(k-c)}+2\}$.

    \item For each $i\in\{1,\ldots,\ell\}$, the induced graph $H[L_i]$ is a collection of paths, so it is coloured using $\ell+1$ distinct colours in $\{\floor{a(k-c)}+1-i(\ell+1),\ldots,\floor{a(k-c)}+1-(i-1)(\ell+1)\}$.  This works, provided that $a\ge\ell(\ell+1) + 2$.
\end{compactitem}
Note that, for the second point to work, it is crucial that $H$ be a \emph{simple} $2$-tree.  If $H$ is a (not necessarily simple) $2$-tree then $H[L_i]$ can be an arbitrary forest, for which an $\ell$-ranking may require $\log|L_i|/\log\log|L_i|$ colours.

\paragraph{The Proof for Simple $t$-Trees.}

Our proof for simple $t$-trees has some elements in common with the proof presented above:
\begin{compactitem}
  \item It follows the same general outline of first inductively colouring components of $H[\bigcup_{i=\ell+1}^{h} L_i]$ and then increasing the colours of dangerous vertices in layer $L_{\ell+1}$ so that they are all unique.

  \item In the final colouring vertices in $L_1,\ldots,L_\ell$ receive colours not larger than $a(k-c)$, and the vertices in $L_0$ have colours that are larger than all vertices in $L_1,\ldots,L_\ell$.
\end{compactitem}

Unfortunately, this is where the similarities end, and for $t\ge 3$ considerable complications appear that are not present when $t=2$. In short, this happens because, for $t\ge 3$, the graph $H[L_i]$ within each layer may require a number of colours that is not bounded by any function of $\ell$.  We now give a high-level overview of how to deal with this.
See \cref{t-tree_fig}

\begin{figure}
  \begin{center}
    \includegraphics{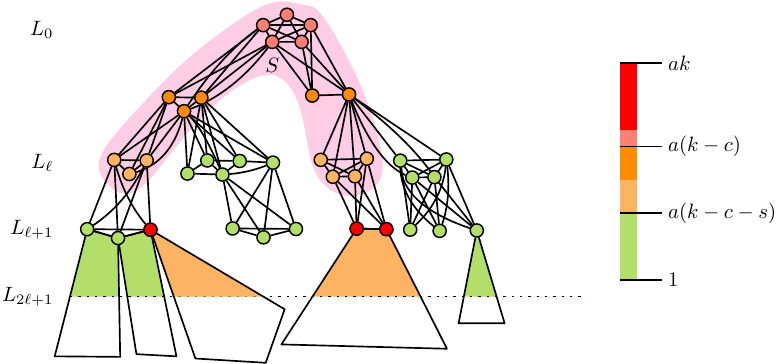}
  \end{center}
  \caption{Finding an $\ell$-ranking of a simple $t$-tree of size $n=(\log^{(t-2)})^k/(\log^{(t-2)}c)^c$. Vertices in $S$ get colours in $[a(k-c-s),a(k-c)]$. Dangerous vertices in $L_{\ell+1}$ get unique colours in $[\floor{a(k-c)},ak]$.  Harmless vertices and vertices in $H[\bigcup_{i=0}^{\ell+1} L_i]-S$ get colours in $[1,a(k-c-s)]$.}
  \label{t-tree_fig}
\end{figure}

We (mostly) give up on the idea of using distinct colours for each of $L_0,\ldots,L_\ell$.  As above, we are inductively colouring a graph $H$ with $n\le (\log^{(t-2)} k)^k$ vertices and $c:=\gamma_{t-2,k}(n)$, and we want to show that $H$ has an $\ell$-ranking that uses at most $\floor{ak}$ colours in which the vertices in $L_1,\ldots,L_{\ell}$ receive colours less than $a(k-c)$.

To achieve this we use a separator $S\subseteq \bigcup_{i=0}^\ell L_i$ that guarantees that the size of each component of $H-S$ is sufficiently small that it has an $\ell$-ranking in which the vertices in $L_0,\ldots,L_\ell$ have colours smaller than $a(k-c-s)$ for some appropriately chosen $s$. For this to work, we need that each component have size at most $(\log^{(t-2)} k)^k/(\log^{(t-2)}(c+s))^{c+s}$.  The situation is complicated further by the fact that the harmless vertices in $L_{\ell+1}$ create lower bounds on the colours of their neighbours in $L_\ell$.  Without these lower bounds, two vertices in distinct components of $H[L_{\ell+1}]$ might receive the same colour which is smaller than the colour of their common neighbour in $L_\ell$.  This requires us to solve a weighted generalization of the problem on $H[\bigcup_{i=0}^{\ell} L_i]$.  This weighted problem on a graph of diameter $d\in O(\ell)$ is the subject of \cref{t-tree-slack}, below.

Assuming this weighted generalization can be solved, this still leaves the problem of colouring the vertices in $S$.  For a carefully chosen value of $s$, the separator $S$ is the union of $O(c^4)$ bags in a $t$-simple tree-decomposition of $H$, where $\log c/\log^{(t-1)}(c)=O(s)$.  We can then augment $S$ into a superset $S'\supseteq S\cup L_0$ in such a way that $H[S']$ has a width-$t$ tree decomposition whose underlying tree has $O(c^4)$ branching nodes.  For each $i\in\{0,\ldots,\ell\}$, the graph $H_i':=H[L_i\cap S']$ has simple treewidth $t-1$ (by \cref{simple-bfs-layers}) and has a tree decomposition of width at most $t-1$ whose underlying tree has $O(c^4)$ branching nodes.  Therefore, by \cref{skeleton-colour} and induction on $t$, $H'_i$ has an $\ell$-ranking using $O(\log c^4/\log^{(t-1)} c^4)=O(s)$ colours. This leaves enough room to colour all of $H'$ using colours in the interval $[\floor{a(k-c-s)}+1,\floor{a(k-c)}]$, using a colouring in which all vertices of $H'_i$ have larger colours than those of $H'_{i+1}$, for each $i\in\{0,\ldots,\ell-1\}$.

In order for all of this to work, we must strike a balance between the size of the separator $S$ and the sizes of the components that remain after removing $S$.  As it turns out, setting $s:=\log c/\log^{(t-1)} c$ achieves what we need.  This choice of $s$ appears in the following lemma, which is what we eventually use to colour the vertices in $L_0,\ldots,L_\ell$.  The purpose of the weighting $(n_v:v\in V(H))$ that appears in this lemma is to deal with the fact, discussed above, that harmless vertices in $L_{\ell+1}$ that are coloured inductively will place lower bounds on the colours of vertices in $L_{\ell}$.

\begin{lem}\label{t-tree-slack}
    Let $t,d,\ell\in\N\setminus\{0\}$ be fixed values, let $k\ge 3$; let $H$ be a graph with $\diam(H)\le d$ and $\stw(H)\le t$ in which each vertex $v\in V(H)$ is assigned a real-valued weight $n_v\ge 1$. Then there exists a constant $a:=a(t,\ell,d)$ such that, if
    \begin{equation}
         \sum_{v\in V(H)} n_v \le \frac{(\log^{(t-2)} k)^k}{(\log^{(t-2)} c)^{c}} \enspace ,
     \label{total-weight}
    \end{equation}
    for some $c\ge 1$ and
    \begin{equation}
         \max\{n_v:v\in V(H)\} \le \frac{(\log^{(t-2)} k)^k}{(\log^{(t-2)} (c+s))^{c+s}} \enspace ,
     \label{max-weight}
    \end{equation}
    where $s := \log c/\log^{(t-1)} c$, then $H$ has an $\ell$-ranking $\varphi:V(H)\to\{1,\ldots, \floor{a(k-c}\}$ such that $\varphi(v)> a(k-\gamma_{t-2,k}(n_v))$ for each $v\in V(H)$.
\end{lem}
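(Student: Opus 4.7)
The plan is to prove \cref{t-tree-slack} by strong induction on $\lfloor k-c\rfloor$, with the other parameters treated as fixed, using \weirdref{simple-t-trees}{a} for simple treewidth $t-1$ (granted by the outer induction on $t$) to $\ell$-rank the separator produced by \cref{weighted-separator}. Throughout, I abbreviate $M_x := (\log^{(t-2)} k)^k / (\log^{(t-2)} x)^x$, so that the hypotheses read $\sum_v n_v \le \beta M_c$ and $\max_v n_v \le M_{c+s}$. WLOG $H$ is connected and edge-maximal with respect to some $t$-simple tree decomposition $\mathcal{T} := (B_x : x \in V(T))$, since adding chords only increases $\lrn$. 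In the base case, when $k-c$ is below a constant threshold depending on $\beta$ and $t$, the total-weight bound forces $|V(H)|=O(1)$, so a trivial $\ell$-ranking fits within $a(k-c-1)$ colors for $a$ chosen large enough.

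For the inductive step, I apply \cref{weighted-separator} to $\mathcal{T}$ with weights $n_v$ and parameter $c^* := \lceil M_c/M_{c+s}\rceil$. A short calculation using $s=\log c/\log^{(t-1)} c$ gives $\log(M_c/M_{c+s})\approx s\cdot\log^{(t-1)} c=\log c$, so $c^* = O(c)$ and the resulting separator $S_0 := \bigcup_{x\in S_T}B_x$ has $|S_0|\le (t+1)c^* = O(c)$, with each component of $H-S_0$ having total weight at most $\beta M_{c+s}$. I then augment $S_0$ to a set $S$ by adding every remaining vertex whose weight exceeds the max-weight threshold $M_{c+s+s'}$ required for the recursive call, where $s':=\log(c+s)/\log^{(t-1)}(c+s)$; the total-weight bound caps the number of such heavy vertices, yielding $|S|=O(\mathrm{poly}(c))$. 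Each component $X$ of $H-S$ now satisfies both hypotheses at parameter $c+s$ (the max-weight condition propagates because $s$ and $s'$ agree to leading order, so the new threshold is precisely $M_{c+s+s'}$), and induction produces an $\ell$-ranking of $X$ using colors in $\{1,\ldots,a(k-c-s-1)\}$ with each color exceeding $a(k-\gamma_{t-2,k}(n_v)-1)$.

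To color $S$ using the remaining top range $\{a(k-c-s-1)+1,\ldots,a(k-c-1)\}$ of $as$ colors, I observe that $H[S]$ inherits $\stw\le t$ and $\diam\le d$. BFS-layering $S$ via \cref{simple-bfs-layers} then yields at most $d+1$ layers, each of simple treewidth $t-1$; applying the outer inductive statement \weirdref{simple-t-trees}{a} for $\stw=t-1$ to each layer and stacking the colorings (shifted into disjoint sub-ranges) $\ell$-ranks $H[S]$ using $O(\log |S|/\log^{(t-1)}|S|) = O(\log c/\log^{(t-1)} c) = O(s)$ colors. Shifting these into the top range uses at most $as$ colors, which fits provided $a=a(t,\ell,d,\beta)$ is chosen large enough. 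Combining the rankings of the components and of $S$ gives $\varphi$; its validity as an $\ell$-ranking follows because any non-trivial $\ell$-path $u_0,\ldots,u_p$ with $\varphi(u_0)=\varphi(u_p)$ either stays in one component of $H-S$ (handled by recursion), lies entirely in $S$ (handled by the $S$-ranking), or crosses $S$ through an interior vertex with color strictly larger than both endpoints. The per-vertex floor on $v\in S$ is met because $\gamma_{t-2,k}(n_v)\ge c+s$, which follows from the max-weight condition together with the heavy-vertex augmentation of $S$.

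The main obstacle is the tight coincidence between the slack $s=\log c/\log^{(t-1)} c$ in the max-weight hypothesis and the $O(\log |S|/\log^{(t-1)}|S|)=O(s)$ color budget needed to $\ell$-rank the separator via the outer induction. Without this precise matching, either the $\ell$-ranking of $S$ would overflow the top-range budget of $as$ colors, or the max-weight hypothesis would fail to propagate to the recursive call at parameter $c+s$. The specific choice of $s$ in the statement of the lemma is exactly what makes both constraints close simultaneously, and pinning down the constant $a$ (which must absorb the hidden constants in the $O(s)$ coloring of $S$, the additive constants from the base case, and the constants from the weighted separator lemma) is the most delicate step.
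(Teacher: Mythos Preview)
There is a genuine gap in the verification that $\varphi$ is an $\ell$-ranking. Your case analysis says that a bad path either stays in one component of $H-S$, lies entirely in $S$, or has an interior vertex in $S$ that dominates both endpoints. But you have missed the case where $u_0,u_p\in S$ receive the same colour while some \emph{interior} vertex $u_i$ lies outside $S$. In that case the path is not a path of $H[S]$, so the fact that you $\ell$-ranked $H[S]$ is useless; and the interior vertices outside $S$ all carry colours from the lower range, so none of them can dominate $u_0$. This is exactly the obstruction that the paper's machinery is built to handle: the paper does not colour the raw separator $S$, but instead closes it upwards in the tree $T$ to obtain $H'=H[\bigcup_{x\in V(T')}B_x]$, so that by \cref{induced-unimodal} any \emph{induced} path with both endpoints in $H'$ lies entirely in $H'$. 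That closure can make $H'$ enormous, which is why the paper then needs the skeleton construction (\cref{path-induced,skeleton-paths,skeleton-size,skeleton-colour}) to reduce each layer of $H'$ back to an $O(\mathrm{poly}(c))$-size subgraph to which the $t{-}1$ case of \weirdref{simple-t-trees}{a} can be applied. Your argument has no substitute for this step.

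There is a second, related problem in your colouring of $S$. You assert that $H[S]$ inherits $\diam\le d$, but induced subgraphs can have strictly larger diameter than the host graph, so this is false in general; and even if it held, \cref{simple-bfs-layers} applies to the BFS layering of $H$ from the root bag $B_r$, not to a BFS layering of $H[S]$, so you cannot directly conclude that the layers of $H[S]$ have simple treewidth $t-1$. The paper sidesteps both issues by intersecting $H'$ with the \emph{original} BFS layers $L_0,\ldots,L_h$ of $H$ (where $h\le d$ because $\diam(H)\le d$), so that \cref{simple-bfs-layers} applies directly to $H[L_i]$, hence to its subgraph $H'_i$.
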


\begin{proof}
    Without loss of generality, we may assume that $H$ is edge-maximal with respect to some $r$-rooted $t$-simple tree decomposition $\mathcal{T}:=(B_x:x\in V(T))$.  Let $L_0,\ldots,L_h$ be the BFS layering of $H$ with $L_0:=B_r$.  Note that $h\le\diam(H)\le d$.

    The proof is by induction on $|H|$.  In the base case, $|H|=0$ and there is nothing to prove. Now assume $|H|\ge 1$.  For each subgraph $X$ of $H$, define $n_{X}:=\sum_{v\in V(X)} n_v$ so that \cref{total-weight} implies that $n_H\le(\log^{(t-2)} k)^k/(\log^{(t-2)}c)^c$.

    Let
    \begin{equation}
        n_0 := \frac{(\log^{(t-2)} k)^k}{\left(\log^{(t-2)}\left(c+s+\tfrac{\log(c+s)}{\log^{(t-1)}(c+s)}\right)\right)^{c+s+\tfrac{\log(c+s)}{\log^{(t-1)}(c+s)}}} \enspace . \label{heavy-def}
    \end{equation}
    We say that a subgraph $X$ of $H$ is \defin{heavy} if $n_X>n_0$ and $X$ is \defin{light} otherwise.  For a heavy subgraph $X$,
    \begin{equation}
        \frac{n_H}{n_X} < \frac{n_H}{n_0}
        \le \frac{\left(\log^{(t-2)}\left(c+s+\tfrac{\log(c+s)}{\log^{(t-1)}(c+s)}\right)\right)^{c+s+\tfrac{\log(c+s)}{\log^{(t-1)}(c+s)}}}{(\log^{(t-2)} c)^c}
         \in O\left(c^4\right)
    \enspace ,
    \end{equation}
    where the upper bound of $O(c^4)$ is justified by a calculation in \cref{calculation-i}.

    By \cref{weighted-separator} with the weight function $\xi(v):=n_v$, there exists $S_T\subseteq V(T)$ of size $O(c^4)$ that defines $S:=\bigcup_{x\in S_T} B_x$ such that each component $X$ of $H-S$ is light.  Let $T'$ be the subtree of $T$ induced by $S_T$ and every $T$-ancestor of every node in $S_T$, i.e., $T':=T[\bigcup_{x\in S_T} V(P_T(x))]$. Let $H':=H[\bigcup_{x\in V(T')} B_x]$.

    For each $i\in\{0,\ldots,h\}$, let $H'_i:=H'[L_i]$. Then  $\mathcal{T}'_i:=(B_x\cap L_i: x\in V(T'))$ is a tree decomposition of $H'_i$ and $H'_i$ is edge-maximal with respect to $\mathcal{T}'_i$.  Each leaf $x$ of $T'$ is an element of $S_T$, therefore $T'$ has at most $|S_T|\in O(c^4)$ leaves. Since $T'$ has $O(c^4)$ leaves, it has $O(c^4)$ branching nodes.  Therefore, by \cref{skeleton-size}, the $(\mathcal{T}_i',\ell)$-skeleton $\hat{H}_i'$ of $H_i'$ has size $|\hat{H_i}'|\in O(c^4)$.  Since $\hat{H}'_i$ is a subgraph of $H'_i$, $\stw(\hat{H}'_i)\le\stw(H'_i)\le\stw(H[L_i])\le t-1$, where the last inequality follows from \cref{simple-bfs-layers}.

    By \weirdref{simple-t-trees}{a} applied to the graph $\hat{H}_i'$ (which has simple treewidth at most $t-1$),\footnote{The case $i=0$ is an exception here, since $H[L_0]=H[B_r]$ is a clique of size at most $t+1$, which certainly has an $\ell$-ranking using at most $t+1\in O(1)$ colours.}
    \[
       \lrn(\hat{H_i}')\in
       O\left(\frac{\log|\hat{H_i}'|}{\log^{(t-1)}|\hat{H_i}'|}\right)
       \subseteq O\left(\frac{\log c^4}{\log^{(t-1)} c^4}\right)
       = O\left(\frac{\log c}{\log^{(t-1)} c}\right)
       = O(s) \enspace .
    \]
    Therefore, by \cref{skeleton-colour} $\lrn(H_i')\in O(s)$, so
    $H_i'$ has an $\ell$-ranking $\varphi:V(H_i')\to \{\floor{a(k-c)}-(i+1)q+1,\ldots,\lfloor a(k-c)\rfloor-iq\}$ for some $q\in O(s)$.

    In the preceding paragraphs, we have defined a colouring $\varphi: V(H')\to \{\floor{a(k-c)}-(h+1)q,\ldots,\floor{a(k-c)})\rfloor\}$. For a sufficiently large constant $a:=a(t,\ell,d)$, $(h+1)q < as$, so $\floor{a(k-c)}-(h+1)q+1 > a(k-c-s)$. Therefore, each vertex in $H'$ receives a colour larger than $\floor{a(k-c-s)}$. By \cref{max-weight}, $\gamma_{t-2,k}(n_v)\ge c+s$ for each $v\in V(H)$, so $\varphi(v)>a(k-c-s) \ge a(k-\gamma_{t-2,k}(n_v))$ for each $v\in V(H')$, as required.

    Since $S':=V(H')\supseteq S$, each component $X$ of $H-V(H')$ is light, so
    \[
       n_{X} \le \frac{(\log^{(t-2)} k)^k}{
        \left(
            \log^{(t-2)}
                \left(
                   c+s+\tfrac{\log(c+s)}{\log^{(t-1)}(c+s)}
               \right)
        \right)^{\left(
           c+s+\tfrac{\log(c+s)}{\log^{(t-1)}(c+s)}
       \right)}
       }
   \]
   Let $c':=c+s$ and let $s':=\log (c+s)/\log^{(t-1)}(c+s)$.  Since each component $X$ of $H-S'$ is light, $n_X$ satisfies \cref{total-weight} with the value $c'+s'\ge c'$ and satisfies \cref{max-weight} with the value $s'$.\footnote{Indeed, $\sum_{v\in V(X)} n_x\le n_0$, so $\max\{n_v:v\in V(X)\}\le n_0= (\log^{(t-2)} k)^k/(\log^{(t-2)}(c'+s'))^{c'+s'}$.}  Therefore, we can apply \cref{t-tree-slack} inductively on $X$ to obtain an $\ell$-ranking $\varphi:V(X)\to\{1,\ldots,\floor{a(k-c')}\}$ in which $\varphi(v)> a(k-\gamma_{t-2,k}(n_v))$ for each $v\in V(X)$, as required.   Doing this for each component $X$ of $H-S'$ completes the colouring $\varphi$ to a total colouring of $H$.

   All that remains is to verify that $\varphi$ is an $\ell$-ranking of $H$. To do this, consider any induced path $u_0,\ldots,u_p$ in $H$ with $p\le\ell$ and $\varphi(u_0)=\varphi(u_p)$.
   We must show that $\varphi(u_0)<\varphi(u_j)$ for some $j\in\{1,\ldots,p-1\}$. There are a few cases to consider:
   \begin{compactenum}
        \item If $\varphi(u_0)=\varphi(u_p) > a(k-c')$ then $\{u_0,u_p\}\subseteq V(H')$.  By \cref{induced-unimodal}, $x_\mathcal{T}(u_i)$ is a $\mathcal{T}$-ancestor of at least one of $x_\mathcal{T}(u_0)$ or $x_\mathcal{T}(u_p)$ for each $i\in\{0,\ldots,p\}$.  By construction, $T'$ contains every $T$-ancestor of $x_\mathcal{T}(u_0)$ and $T'$ contains every $T$-ancestor of $x_\mathcal{T}(u_p)$.  Therefore $\{u_0,\ldots,u_p\}\subseteq \bigcup_{x\in V(T')} B_x=V(H')$.

        For distinct $i$ and $j$ vertices in $H'_i$ and $H'_j$ receive colours from disjoint sets.  Therefore, since $\varphi(u_0)=\varphi(u_p)$,  $\{u_0,u_p\}\subseteq V(H'_i)$ for some $i\in\{0,\ldots,h\}$.  By \cref{order-relation,induced-unimodal}, $\{u_0,\ldots,u_{p}\}\subseteq \bigcup_{j=0}^{i} V(H'_j)$.  There are two cases to consider:
        \begin{compactenum}
           \item $\{u_0,\ldots,u_{p}\}\subseteq V(H'_i)$ in which case $\varphi(u_j)>\varphi(u_0)$ for some $j\in\{1,\ldots,p-1\}$ since $\varphi$ is an $\ell$-ranking of $H'_i$ (by the application of \cref{skeleton-colour} to $H'_i$); or
           \item $u_j\in V(H'_{i-1})$ for some $j\in\{1,\ldots,p-1\}$.  In this case $\varphi(u_j)\ge\lfloor a(k-c)\rfloor-iq + 1 > \lfloor a(k-c)\rfloor-iq \ge \varphi(u_0)$.
       \end{compactenum}
       \item If $\varphi(u_0)=\varphi(u_p) \le a(k-c')$ then $u_0\in V(X)$ and $u_p\in V(Y)$ for some components $X$ and $Y$ of $H-S'$.  Either
       \begin{compactenum}
            \item $u_j\in S'=V(H')$ for some $j\in\{1,\ldots,p-1\}$ in which case $\varphi(u_j)>a(k-c')\ge\varphi(u_0)$; or
            \item $X=Y$ and $\{u_0,\ldots,u_p\}\subseteq V(X)$, in which case $\varphi(u_j)>\varphi(u_0)$ for some $j\in\{1,\ldots,p\}$ (by the application of \cref{t-tree-slack}, inductively, on $X$). \qedhere
        \end{compactenum}
    \end{compactenum}
\end{proof}

Since our strategy is to use \cref{t-tree-slack} on the first $\ell+1$ BFS layers of $H$ and then recurse on the subgraphs attached to layer $\ell+1$, we need to define vertex weights $n_v$ that allow us to capture the sizes of the subgraphs attached to vertices in layer $\ell+1$.  The following lemma shows that the obvious approach to this does not overcount by more than a factor of $t$.

\begin{lem}\label{size-claim}
    Let $H$ be a graph that is edge-maximal with respect to an $r$-rooted tree decomposition $\mathcal{T}:=(B_x:x\in V(T))$ of width at most $t$ and let $\mathcal{L}:=L_0,\ldots,L_m$ be the BFS layering of $H$ with $L_0=B_r$.  For each $i\in\{0,\ldots,m\}$ and each $v\in L_i$, let $H_v$ be the component of $H[\{v\}\bigcup_{j=i+1}^m L_j]$ that contains $v$ and let $\kappa_v:= t+|H_v|$.  Then $\sum_{v\in L_i} \kappa_v \le t\cdot|\bigcup_{j=i}^m L_j|$.
\end{lem}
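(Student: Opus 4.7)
The plan is a double-counting argument. Expanding the definition, $\sum_{v \in L_i} \kappa_v = (t-1)|L_i| + \sum_{v \in L_i} |V(H_v)|$, so it suffices to bound the second summation by $t|V(H)| - (t-1)|L_i|$. I will do this by reorganizing the sum to count, for each $w \in V(H)$, the multiplicity $m(w) := |\{v \in L_i : w \in V(H_v)\}|$, and then bounding $m(w)$ case by case.

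By construction $V(H_v) \subseteq \{v\} \cup \bigcup_{j>i} L_j$. Hence if $w$ lies in a layer $L_{j'}$ with $j' < i$ then $m(w) = 0$, while if $w \in L_i$ then only $v = w$ can work (the subgraph $H[\{v\}\cup\bigcup_{j>i}L_j]$ contains just one vertex of $L_i$), giving $m(w) = 1$. The main step is to handle a vertex $w \in L_{j'}$ with $j' > i$: let $X$ denote the component of $H[\bigcup_{j > i} L_j]$ that contains $w$. Because BFS layers are consecutive, every edge from $X$ to $L_i$ lands in $L_{i+1}$, so in the subgraph $H[\{v\} \cup \bigcup_{j > i} L_j]$ a vertex $v \in L_i$ is connected to $X$ (equivalently, $w \in V(H_v)$) if and only if $v \in N_H(V(X)) \cap L_i$. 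Applying \cref{up-neighbours} with index $i+1$ and component $X$ of $H[\bigcup_{j \ge i+1} L_j]$ therefore gives $m(w) \le t$.

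Combining the three cases and using $\sum_{j \le i}|L_j| \ge |L_i|$ yields
\begin{equation*}
\sum_{v \in L_i} |V(H_v)| \;=\; \sum_{w \in V(H)} m(w) \;\le\; |L_i| + t\bigl(|V(H)| - |L_i|\bigr),
\end{equation*}
and substituting back gives $\sum_{v \in L_i} \kappa_v \le (t-1)|L_i| + |L_i| + t|V(H)| - t|L_i| = t|V(H)|$, as required. There is no real obstacle here; the only subtlety is the observation that, because BFS layers are consecutive, the membership condition $w \in V(H_v)$ reduces exactly to the up-neighbourhood condition needed to invoke \cref{up-neighbours}, which is what converts the structural bound of width $t$ into the $t$-fold overcount.
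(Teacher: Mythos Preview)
Your proof is correct and is essentially the same double-counting argument as the paper's: both use \cref{up-neighbours} to bound, for each component $X$ of $H[\bigcup_{j>i}L_j]$, the number of vertices $v\in L_i$ with $V(X)\subseteq V(H_v)$ by $t$. The only cosmetic difference is that the paper sums over components $X$ (writing $\sum_{v\in L_i}\kappa_v \le t|L_i| + \sum_X |C_X|\cdot|X|$ with $C_X:=L_i\cap N_H(V(X))$) whereas you sum over individual vertices $w$ via the multiplicity $m(w)$.
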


\begin{proof}
    For each component $X$ of $H[\bigcup_{j=i+1}^m L_j]$, let $C_X:=L_i\cap N_H(V(X))$.  By \cref{up-neighbours}, $|C_X|\le t$. A vertex $w\in V(X)$ appears in $H_v$ if and only if $v\in C_X$.  Therefore,
    \[
        \sum_{v\in L_i} \kappa_v \le t\cdot|L_i| + \sum_{X}|C_X|\cdot|X| \le t\cdot|L_i| + \sum_{X}t\cdot|X| = t\cdot \left|\bigcup_{j=i}^m L_j\right| \enspace .  \qedhere
    \]
\end{proof}

Finally, we can prove the technical lemma that implies \weirdref{simple-t-trees}{a}.

\begin{lem}\label{t-tree-technical}
    Let $n,t,\ell\in\N\setminus\{0\}$ and $k, c\in\R$ be such that $tn\le (\log^{(t-2)} k)^k/(\log^{(t-2)} c)^{c}$;let $H$ be an $n$-vertex graph that is edge-maximal with respect to some $r$-rooted $t$-simple tree decomposition $\mathcal{T}:=(B_x:x\in V(T))$ of $H$; let $L_0:=\{v_1,\ldots,v_{t'}\}\subseteq B_r$; and let $L_0,\ldots,L_m$ be the BFS layering of $H$.

    Then, there exists an integer $a:=a(t,\ell)$ such that, for any distinct $\phi_0,\ldots,\phi_{t'}\in \{\floor{a(k-c)}+1,\ldots,\floor{ak}\}$ there exists an $\ell$-ranking $\varphi:V(G)\to\{1,\ldots,\floor{ak}\}$ such that
    \begin{compactenum}[(R1)]
        \item $\varphi(v_i)=\phi_i$ for each $i\in\{1,\ldots,t'\}$; and
        \item $\varphi(v)<a(k-c)$ for each $v\in\bigcup_{j=1}^{\ell} L_i$.
    \end{compactenum}
\end{lem}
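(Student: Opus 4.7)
The plan is to induct on $|H|$: apply \cref{t-tree-slack} to the shallow piece $H^{top} := H[L_0 \cup L_1 \cup \cdots \cup L_\ell]$ to colour the first $\ell+1$ BFS layers, overwrite the $L_0$-colours to match the prescribed $\phi_i$, and then recurse on the pieces hanging off $L_\ell$. To this end, for each $v \in L_\ell$ let $\kappa_v := t-1 + |H_v|$ exactly as in \cref{size-claim}, and weight $n_v := 1$ for $v \in L_0 \cup \cdots \cup L_{\ell-1}$ and $n_v := \kappa_v$ for $v \in L_\ell$. By \cref{size-claim}, $\sum_v n_v \le (t+1)|H| \le (t+1)(\log^{(t-2)} k)^k/(\log^{(t-2)} c)^c$, which verifies condition \cref{total-weight} of \cref{t-tree-slack} with $\beta=t+1$. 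The graph $H^{top}$ has diameter at most $2\ell$ and $\stw(H^{top}) \le t$ (the restriction of $\mathcal{T}$ to $V(H^{top})$ remains $t$-simple and witnesses edge-maximality).

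Applying \cref{t-tree-slack} to $H^{top}$ yields an $\ell$-ranking $\psi : V(H^{top}) \to \{1, \ldots, a'(k-c-1)\}$ for some constant $a'=a'(t,\ell)$ satisfying $\psi(v) > a'(k - \gamma_v - 1)$, where $\gamma_v := \gamma_{t-2,k}(n_v)$. I then define $\varphi$ by $\varphi(v):=\psi(v)$ on $V(H^{top}) \setminus L_0$ and $\varphi(v_i):=\phi_i$ for $i \in \{1,\ldots,t'\}$. Since each $\phi_i$ exceeds every $\psi$-value used in $H^{top}$ and the $\phi_i$ are pairwise distinct, the overwrite preserves the $\ell$-ranking property on $H^{top}$: any path of length at most $\ell$ with a $v_i$-endpoint is confined to $H^{top}$, and the other endpoint either receives a strictly smaller colour or is some $v_j \ne v_i$ receiving $\phi_j \ne \phi_i$; paths with $v_i$ merely interior only benefit from the large internal colour.

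For the recursion, consider each component $Y$ of $H[\bigcup_{j=\ell+1}^m L_j]$. Let $C_Y := N_H(V(Y)) \cap L_\ell$, which has $|C_Y| \le t$ by \cref{up-neighbours}, and set $c_Y := \max\{\gamma_v : v \in C_Y\}$. The subgraph $H_Y := H[C_Y \cup V(Y)]$ satisfies $|H_Y| \le t + |V(Y)| \le \kappa_{v^*} + 1$ for any $v^* \in C_Y$ achieving $\gamma_{v^*} = c_Y$, which by the definition of $\gamma_{t-2,k}$ yields $|H_Y| \le (\log^{(t-2)} k)^k/(\log^{(t-2)} c_Y)^{c_Y}$ (the negligible additive $1$ absorbed into the constant). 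The $\psi$-colours on $C_Y$ lie in the required high range $\{\lfloor a(k - c_Y - 1)\rfloor + 1, \ldots, ak\}$ because $\psi(v) > a'(k - \gamma_v - 1) \ge a'(k - c_Y - 1)$ for each $v \in C_Y$. I invoke the inductive hypothesis on $H_Y$ with $C_Y$ as root bag and the $\psi$-colours on $C_Y$ as prescribed $\phi^Y$-values, splicing the resulting colourings into $\varphi$ on $V(H) \setminus V(H^{top})$.

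The main obstacle will be verifying that the resulting $\varphi$ is a valid $\ell$-ranking of all of $H$: one must analyse paths of length at most $\ell$ that cross the interface $L_\ell$, using that the $L_\ell$-colouring is consistent between the top call and each recursive call, that distinct $H_Y$'s use colour ranges disjoint below the $\psi$-colours of their respective $C_Y$, and the induced-path and unimodal structure from \cref{induced-paths-only,induced-unimodal} to confine any crossing path to $H^{top} \cup H_Y$ for a single $Y$. Two further technicalities demand attention: first, the prescribed $\phi^Y$-values on $C_Y$ must be distinct, which I enforce by augmenting each recursive subproblem with the clique edges on $C_Y$ (preserving simple treewidth at most $t$ since $|C_Y| \le t$); second, condition \cref{max-weight} of \cref{t-tree-slack} must hold, so any $v \in L_\ell$ whose $\kappa_v$ exceeds the max-weight threshold is split off early and pre-coloured by an immediate recursive call on $H_v$ before invoking \cref{t-tree-slack} on the reduced $H^{top}$, with the constant overhead absorbed by a sufficiently generous choice of $a = a(t,\ell)$.
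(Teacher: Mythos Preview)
Your overall architecture matches the paper's proof: weight the interface layer by the sizes $\kappa_v$, invoke \cref{t-tree-slack} on the shallow block, overwrite the root colours, and recurse on the pieces below. There is, however, a genuine gap in your handling of the heavy (``dangerous'') interface vertices, and it stems from placing the interface one layer too high.

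You take $H^{top}=H[L_0\cup\cdots\cup L_\ell]$ with the weighted layer at $L_\ell$; the paper instead takes $H_0=H[L_0\cup\cdots\cup L_{\ell+1}]$ with the weighted layer at $L_{\ell+1}$. This matters because of (R2): every vertex of $L_1\cup\cdots\cup L_\ell$ must end up with a colour strictly below $a(k-c-1)$. A dangerous vertex $v$ (one with $\kappa_v>n_0$) must receive a colour that is \emph{unique in all of $H_0$} and larger than every colour produced by \cref{t-tree-slack}; the only place such colours can live is in $\{\lfloor a(k-c-1)\rfloor+1,\ldots,ak\}$. If dangerous vertices sit in $L_\ell$, as in your setup, assigning them these colours violates (R2). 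The paper avoids this by pushing the interface to $L_{\ell+1}$, which is not constrained by (R2).

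Your proposed workaround---``split off $H_v$ early and recurse on it before calling \cref{t-tree-slack}''---does not close the gap. A component $Y$ below the interface can have $C_Y$ containing both a dangerous $v$ and a harmless $w$. If you colour $Y$ via the early recursion on $H_v$ (rooted only at $v$), the colours assigned inside $Y$ are calibrated to $v$'s parameter and need not lie below $\psi(w)$; a length-$\le\ell$ path from $w$ into $Y$ then has no guaranteed larger internal colour. The paper's remedy is cleaner: cap $n_v:=\min\{n_0,\kappa_v\}$ so that \cref{max-weight} holds, run \cref{t-tree-slack} on all of $H_0$, then \emph{afterwards} recolour the $O(c)$ dangerous vertices of $L_{\ell+1}$ with fresh colours from $\{\lfloor a(k-c-1)\rfloor+1,\ldots,ak\}\setminus\{\phi_1,\ldots,\phi_{t'}\}$, and finally recurse once per component $X$ with the \emph{full} $C_X$ (dangerous and harmless alike) as the prescribed root bag. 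With the interface at $L_{\ell+1}$ this recolouring is compatible with (R2), and since every recursion sees all of $C_X$, the cross-interface verification goes through exactly as you sketched.
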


\begin{proof}
    The proof is by induction on $n$. If $n=0$, then there is nothing to prove.

    Let $n_0:=(\log^{(t-2)} k)^k/(\log^{(t-2)} (c+s))^{c+s}$ and, for each $v\in V(H)$, let $\kappa_v$ be defined as in \cref{size-claim}.  We say that a vertex $v\in L_{\ell+1}$ is \defin{dangerous} if $\kappa_v>n_0$ and $v$ is \defin{harmless} otherwise.

    We now assign weights to the vertices of the graph $H_0:=H[\bigcup_{j=0}^{\ell+1}]$ in such a way that we can apply \cref{t-tree-slack} to $H_0$.  For each $v\in\bigcup_{j=0}^\ell L_j$, we set $n_v:=1$.  For each $v\in L_{\ell+1}$, we set $n_v := \min\{n_0, \kappa_v\}$.  With this assignment of weights, \cref{size-claim} implies that $\sum_{v\in V(H_0)} n_v\le tn \le (\log^{(t-2)} k)^k/(\log^{(t-2)} c)^c$, which satisfies \cref{total-weight} and, by definition, $\max\{n_v:v\in V(H_0)\}\le n_0$ which satisfies \cref{max-weight}.

    In the following, we use the shorthand $\gamma_v := \gamma_{t-2,k}(n_v)$.   By \cref{t-tree-slack}, $H_{0}$ has an $\ell$-ranking $\varphi:V(H_0)\to\{1,\ldots,\lfloor a(k-c)\rfloor\}$ in which $\varphi(v)> a(k-\gamma_v)$ for each $v\in V(H_0)$.
    By \cref{size-claim}, the number of dangerous vertices in $L_{\ell+1}$ is at most
    \[
        \frac{tn}{n_0} \in O\left(\frac{(\log^{(t-2)} (c+s))^{c+s}}{(\log^{(t-2)} c)^c}\right) \in O(c) \enspace ,
    \]
    where the $O(c)$ upper bound is justified by a calculation in \cref{calculation-ii}.
    Before continuing, we make the following modifications to $\varphi$.

    \begin{compactenum}
        \item We set $\varphi(v_i):=\phi_i$ for each $i\in\{1,\ldots,t'\}$.
        \item For each dangerous vertex $v$, we set $\varphi(v)$ to a distinct value in $\{\lfloor a(k-c)\rfloor+1,\ldots,ak\}\setminus\{\phi_1,\ldots,\phi_t\}$. (Since the number of dangerous vertices is $O(c)$, this is always possible.)
    \end{compactenum}
    These modifications ensure that $\varphi$ satisfies requirements (R1) and (R2) and, since they only introduce new unique colours larger than any existing colour, they preserve the fact that $\varphi$ is an $\ell$-ranking of $H_0$.

    For each component $X$ of $H-V(H_0)$, let $C_X:=L_{\ell+1}\cap N_H(V(X))$ and let $H_X:=H[C_X\cup V(X)]$.  By \cref{up-neighbours}, $|C_X|\le t$. We apply induction on $H_X$ for each component $X$ of $H-H_0$ using colours $\phi_1',\ldots,\phi_{t'}'$ already assigned to the vertices in $C_X$.
    When we do this, we obtain an $\ell$-ranking of $H_X$ in which each vertex $w$ of $X[\bigcup_{j=\ell+2}^{2\ell+1} L_j]$ receives a colour $\varphi(w) \le a(k-\gamma_{t-2,k}(|H_X|))$.

    For each harmless vertex $v\in C_X$, $X$ is a subgraph of $H_v$, so $n_v\ge t+|X|\ge |C_X|+|X|= |H_X|$, so $\gamma_v \le \gamma_{t-2,k}(|H_X|)$. Therefore, for each harmless $v\in C_X$, $\varphi(v)> a(k-\gamma_v)\ge\varphi(w)$ for each $w\in V(X)\cap [\bigcup_{j=\ell+2}^{2\ell+1} L_j]$.  For each dangerous vertex $v\in C_X$, $\varphi(v)>a(k-c)$.  Since $|H_X|\le|H|$, $\gamma_{t-2,k}(|H_X|) \ge c$.  Therefore each dangerous vertex $v\in C_X$ also receives a colour larger than each vertex $w$ in $X[\bigcup_{j=\ell+2}^{2\ell+1} L_j]$.

    All that remains is to verify that the resulting colouring is, indeed, an $\ell$-ranking of $H$.  Consider some induced path $u_0,\ldots,u_p$ in $H$ of length $p\le\ell$ such that $\varphi(u_0)=\varphi(u_p)$.  There are some cases to consider:

    \begin{compactenum}
        \item $\{u_0,u_p\}\subseteq V(H_0)$. In this case, \cref{order-relation,induced-unimodal} imply that $\{u_0,\ldots,u_p\}\subseteq V(H_0)$.  However, we have already established that $\varphi$ is an $\ell$-ranking of $H_0$ through the application of \cref{t-tree-slack} and the subsequent recolouring of vertices in $L_0$ and $L_{\ell+1}$.  Therefore, $\varphi(u_0)<\max\{\varphi(u_1),\ldots,\varphi(u_{p-1})\}$.

        \item $u_0\in V(X)$ for some component $X$ of $H-V(H_0)$ and $u_i\in C_X$ for some $i\in\{1,\ldots,p-1\}$.  Since $i<p\le\ell$, this implies that $u_0\in\bigcup_{j=\ell+2}^{\ell+p+1} L_j\subseteq \bigcup_{j=\ell+2}^{2\ell+1}L_j$.  We have already argued above that this implies that $\varphi(u_i)>\varphi(u_0)$.

        \item $\{u_0,\ldots,u_p\}\subseteq V(X)$ for some component $X$ of $H-V(H_0)$.  In this case, the inductive hypothesis ensures that $\varphi$ is an $\ell$-ranking of $X$, so $\varphi(u_0)<\max\{\varphi(u_0),\ldots,\varphi(u_p)\}$. \qedhere
    \end{compactenum}
\end{proof}

Rewriting \cref{t-tree-technical} in terms of $n$ yields \weirdref{simple-t-trees}{a}:

\begin{proof}[Proof of \cref{simple-t-trees} (upper bound)]
    When $t=1$, $H$ is a collection of vertex-disjoint paths and $\lrn(H)\in O(\log\ell)=O(1)=O(\log n/\log^{(1)} n)$ (see \cref{logarithmic-base-case}).  Assume now that $t\ge 2$.
    Fix some $\epsilon > 0$, let $k:=(1+\epsilon)\log (tn)/\log^{(t)} n$, and let $c:=\tau(t-2)$.
    By \cref{t-tree-technical}, $\lrn(H)\in O(k)$ provided that $k$ satisfies
    \[  \frac{(\log^{(t-2)} k)^{k}}{(\log^{(t-2)} c)^c} \ge tn
        \Leftrightarrow
        \frac{k\log^{(t-1)} k}{\log (tn)} \ge 1 \enspace ,
    \]
    for all sufficiently large $n$.
    With our choice of $k$ we have
    \begin{align*}
      \frac{k\log^{(t-1)} k}{\log (tn)}
      & = (1+\epsilon)\cdot\frac{\log^{(t-1)}\left((1+\epsilon)\log n/\log^{(t)} n\right)}{\log^{(t)} n} \\
      & = (1+\epsilon)\cdot\frac{\log^{(t-2)}\left(\log^{(2)} n + \log(1+\epsilon) - \log^{(t+1)} n\right)}{\log^{(t-2)}\left(\log^{(2)} n\right)} \\
      & = (1+\epsilon)\cdot\frac{\log^{(t-2)}\left(\log^{(2)} n-o(\log^{(2)} n)\right)}{\log^{(t-2)}\left(\log^{(2)} n\right)} \\
      & \qquad \text{(since $t\ge 2$, so $\log^{(t+1)} n\in o(\log^{(2)} n)$ )}\\
      & \rightarrow 1+\epsilon
    \end{align*}
    as $n\rightarrow\infty$.\footnote{If there exists some $\epsilon >0$ and $x_0$ such that $f(x)-\delta x\le f(x-\delta x)\le f(x)$ for all $x\ge x_0$ and all $\delta\in[0,\epsilon]$ then $\lim_{x\to\infty} [f(x-o(x))/f(x)] = 1$.  Here we are using this with $f(x):=\log^{(t-2)} x$ and $x:=\log^{(2)} n$.}
    Therefore, $\lrn(H)\in O(k)=O(\log n/\log^{(t)} n)$, as required.
    %
\end{proof}

\subsection{Bounded Genus Graphs}

The upper bound in \cref{bounded-genus} for bounded genus graphs follows from \cref{simple-t-trees,product-lemma,dumb} and the following recent result of \citet{distel.hickingbotham.ea:improved}:

\begin{thm}[\cite{distel.hickingbotham.ea:improved}] \label{product-structure-genus}
    For every $n$-vertex graph $G$ of Euler genus at most $g$, there exists some at most $n$-vertex simple $3$-tree $H$ and some path $P$ such that $G$ is isomorphic to a subgraph of $H\boxtimes K_{\max\{2g,3\}}\boxtimes P$
\end{thm}

\subsection{Other Graph Families with Product Structure}

As noted in the introduction, several other families of graphs are known to have product structure theorems like \cref{product-structure,product-structure-genus}.  In particular, \citet{dujmovic.joret.ea:planar} show:

\begin{thm}[\cite{dujmovic.joret.ea:planar}]\label{apex-minor-free}
    For any apex graph $A$, there exists a value $t$ such that any $n$-vertex $A$-minor free graph $G$ is isomorphic to a subgraph of $H\boxtimes P$ where $|H|\le n$, $\tw(H)\le t$, and $P$ is a path.
\end{thm}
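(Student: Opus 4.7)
The plan is to deduce the theorem from two independent ingredients: (a) every $A$-minor-free graph $G$ has \emph{layered treewidth} bounded by some constant $k=k(A)$, meaning that $G$ admits a tree decomposition $(B_x : x\in V(T))$ together with a BFS layering $L_0,L_1,\ldots$ such that $|B_x\cap L_i|\le k$ for every $x$ and every $i$; and (b) bounded layered treewidth already implies the desired product structure, with $\tw(H)\le k-1$ and $P$ a path on the layer indices.

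For step (a), I would invoke Eppstein's diameter--treewidth theorem, which states that for any apex-minor-free class there is a constant $c=c(A)$ such that every subgraph of diameter at most $r$ has treewidth at most $cr$. Given $G\in\mathcal{G}_A$, fix any BFS layering $L_0,L_1,\ldots$ from an arbitrary root. For each $i$ the subgraph induced by the two consecutive layers $L_i\cup L_{i+1}$ has diameter $O(1)$, hence treewidth $O(1)$ by Eppstein's theorem. Stacking these constant-width local tree decompositions along their shared layers, using the standard overlap-gluing argument, produces a global tree decomposition in which every bag meets every layer in a constant number of vertices, which is precisely bounded layered treewidth.

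For step (b), I would start from the layered tree decomposition $(B_x)$, $(L_i)$ given by (a) and construct $H$ on vertex set $V(T)\times\{1,\ldots,k\}$. Adjacency in $H$ is defined so that each set $\{x\}\times\{1,\ldots,k\}$ is a clique and so that cross-bag edges are inherited from $G$; placing the bag $\{x\}\times\{1,\ldots,k\}$ at each $x\in V(T)$ gives a width-$(k-1)$ tree decomposition of $H$. I would then assign each $v\in L_i$ a label $\lambda(v)\in\{1,\ldots,k\}$ that is consistent across all bags containing $v$ (chosen greedily via a DFS of $T$, using that $|B_x\cap L_i|\le k$). The map $v\mapsto((x_T(v),\lambda(v)),i)$ then realizes $G$ as a subgraph of $H\boxtimes P$, where $P$ is the path on the BFS-layer indices.

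The main obstacle is step (a): Eppstein's linear local treewidth theorem for apex-minor-free classes, whose proof rests on the Robertson--Seymour structure theorem specialized to the exclusion of an apex minor. The conceptual point is that excluding an \emph{apex} minor forbids the ``unbounded apex-like attachments'' that appear when excluding only a general minor, so large tangles are essentially bounded-genus, and on bounded-genus graphs the BFS-based local treewidth bound is classical. Step (b) is by now a standard conversion; the same pattern is used implicitly in the proof of \cref{product-structure,product-structure-genus}.
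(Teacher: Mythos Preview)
This theorem is quoted from \cite{dujmovic.joret.ea:planar} and is not proved in the present paper; it is simply invoked in the proof of \cref{meta}. So there is no ``paper's own proof'' to compare against, only the original source. That said, your outline does not recover that proof and contains two genuine gaps.

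\textbf{Step (a).} The assertion that $G[L_i\cup L_{i+1}]$ has diameter $O(1)$ is false. In the $n\times n$ grid with BFS from a corner, the subgraph induced by two consecutive diagonals is a path of length $\Theta(i)$. Eppstein's theorem bounds treewidth in terms of diameter, so you get nothing from it here. The correct way to invoke linear local treewidth (as in Dujmovi\'c--Morin--Wood, \emph{Layered separators}) is to first contract $L_0\cup\cdots\cup L_{i-1}$ to a single vertex along a BFS tree; the resulting minor of $G$ is still $A$-minor-free and now genuinely has diameter $O(1)$, so Eppstein applies. You are missing exactly this contraction step.

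\textbf{Step (b).} Even granting bounded layered treewidth, your conversion to $H\boxtimes P$ does not work as written. You declare the bags of $H$ to be $\{x\}\times\{1,\ldots,k\}$ for $x\in V(T)$; for this to be a tree decomposition of $H$, every edge of $H$ must lie inside some such bag, which forces $H$ to be a disjoint union of $K_k$'s. Then $H\boxtimes P$ has bounded pathwidth, and no tree of large height embeds in it. The underlying difficulty is real: for an edge $vw\in E(G)$, the nodes $x_T(v)$ and $x_T(w)$ can be arbitrarily far apart in $T$, so the map $v\mapsto (x_T(v),\lambda(v))$ does not send edges of $G$ to edges of any bounded-treewidth $H$. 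More broadly, bounded layered treewidth is not known to imply the product structure $G\subseteq H\boxtimes P$ with $\tw(H)$ bounded; this implication is precisely what the product-structure theorems of \cite{dujmovic.joret.ea:planar} establish by other means. Their proof for apex-minor-free classes goes through the Robertson--Seymour structure theorem (tree decompositions into almost-embeddable pieces with no apices), combined with the bounded-genus product structure and a clique-sum argument---not through a generic layered-treewidth-to-product conversion. Your remark that ``the same pattern is used implicitly in the proof of \cref{product-structure,product-structure-genus}'' is incorrect: those proofs use tripod/vertical-path partitions, which is a different mechanism.
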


A graph is \defin{$(g,k)$-planar} if it has an embedding in a surface of Euler genus $g$ in which each edge is involved in at most $k$ crossings with other edges.  \citet{dujmovic.morin.ea:structure} prove analogues of \cref{apex-minor-free} for some non-minor-closed families of graphs, the most well-known of which are the $(g,k)$-planar graphs:

\begin{thm}[\cite{dujmovic.morin.ea:structure}]\label{gk-planar}
    For any integers $g$ and $k$, there exists a value $t$ such that any $n$-vertex $(g,k)$-planar graph $G$ is isomorphic to a subgraph of $H\boxtimes P$, where $|H|\le n$, $\tw(H)\le t$, and $P$ is a path.
\end{thm}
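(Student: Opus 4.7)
The plan is to reduce $(g,k)$-planar graphs to the bounded-genus case via planarization, and then to transfer the product structure of the planarization back to $G$ through a shortcut-style argument within the layered-treewidth picture.

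Given a $(g,k)$-planar drawing of $G$, I would first form the planarization $G^+$ by inserting a degree-$4$ dummy vertex at each crossing. Then $G^+$ embeds without crossings in a surface of Euler genus at most $g$, and each edge $uv \in E(G)$ is realised as a path $P_{uv}$ in $G^+$ of length at most $k+1$ whose internal vertices are the (at most $k$) dummies on $uv$. Applying \cref{product-structure-genus} to $G^+$ gives $G^+ \subseteq \tilde H \boxtimes P^+$ where $\tilde H := H^+ \boxtimes K_{\max\{2g,3\}}$ has $\tw(\tilde H) \le t_0$ for some $t_0 = t_0(g)$. Via the standard equivalence between product structure and layered treewidth, this embedding furnishes a BFS layering $\mathcal{L}^+ = (L_0^+, L_1^+, \dots)$ of $G^+$ (indexed by the $P^+$-coordinate) together with a tree decomposition $(B_x : x \in V(T))$ of $G^+$ such that $|B_x \cap L_i^+| \le t_0 + 1$ for every $x$ and $i$; indeed, one takes a width-$t_0$ tree decomposition of $\tilde H$ and pulls it back under the $\tilde H$-projection.

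I would then group $k+2$ consecutive BFS layers into super-layers $\tilde L_j := L_{j(k+2)}^+ \cup \cdots \cup L_{(j+1)(k+2)-1}^+$. Since every edge of $G^+$ changes the $P^+$-coordinate by at most one, any path of length at most $k+1$ in $G^+$ spans at most $k+2$ consecutive BFS layers, so every realisation path $P_{uv}$ lies inside $\tilde L_j \cup \tilde L_{j+1}$ for some $j$. Build a tree decomposition of $G$ by restricting each $B_x$ to $V(G)$ and then inflating: whenever an internal vertex of some $P_{uv}$ lies in $B_x$, add both endpoints $u$ and $v$ to the restricted bag. The key affinity bound---each dummy vertex is internal to exactly two realisation paths---combined with the super-layer confinement shows that each inflated restricted bag meets each super-layer in $O((k+2)(t_0+1)) = O(kg)$ vertices, because the endpoints added on account of dummies in $B_x$ come only from super-layers $\tilde L_{j-1}, \tilde L_j, \tilde L_{j+1}$. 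Applying the equivalence in reverse then yields $G \subseteq H \boxtimes P$ with $\tw(H) \le t = t(g,k)$ and $P$ a path; trimming unused vertices of $H$ achieves $|H| \le n$.

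The step I expect to be most delicate is verifying that the restricted-inflated bags really form a valid tree decomposition of $G$. Edge-coverage is forced by the inflation rule whenever $P_{uv}$ has an internal vertex, and is inherited from the original decomposition when $P_{uv}$ is a single edge of $G^+$. The subtree axiom requires that, for each $v \in V(G)$, the new subtrees contributed by the realisation paths of $v$'s neighbours in $G$ glue to the original subtree of $v$; this works because the first internal vertex $x$ of $P_{vw}$ is adjacent to $v$ in $G^+$, so some bag of the original decomposition contains both $v$ and $x$ and therefore lies in the intersection of the two subtrees. Once this bookkeeping is in place, the counting argument above completes the proof.
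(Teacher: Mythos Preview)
The paper does not supply its own proof of \cref{gk-planar}; it is quoted from \cite{dujmovic.morin.ea:structure} and used as a black box in the proof of \cref{meta}. So there is no in-paper argument to compare against, only the argument in the cited reference.

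Your overall plan---planarize, apply the bounded-genus product structure to the planarization $G^+$, group $k+2$ consecutive layers into super-layers, and inflate bags by pushing the endpoints of each realisation path into every bag meeting that path---is exactly the strategy used in \cite{dujmovic.morin.ea:structure}, and your verification of the tree-decomposition axioms and of the per-super-layer bag bound is correct.

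There is, however, a genuine gap at the last step. You invoke a ``standard equivalence between product structure and layered treewidth'' and then ``apply the equivalence in reverse'' to conclude $G\subseteq H\boxtimes P$ with $\tw(H)\le t(g,k)$. That equivalence does not exist: product structure (bounded \emph{row treewidth}) implies bounded layered treewidth, but not conversely. What your inflation argument establishes is that $G$ has bounded layered treewidth with respect to the super-layering; this alone does not yield an $H$ of bounded treewidth with $G\subseteq H\boxtimes P$. Indeed, $k$-planar graphs were already known to have bounded layered treewidth before \cite{dujmovic.morin.ea:structure}; the contribution of that paper is precisely to upgrade this to genuine product structure, which requires producing a \emph{partition} of $V(G)$ whose quotient has bounded treewidth, not merely a layered tree decomposition. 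The naive partition by $\tilde H$-coordinate fails because its quotient is only contained in $\tilde H^{k+1}$, and powers of bounded-treewidth graphs need not have bounded treewidth.

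To close the gap you must work at the level of partitions rather than tree decompositions: construct the parts of $H$ explicitly (in \cite{dujmovic.morin.ea:structure} this is done by merging, for each part $V_h$ of the $\tilde H$-partition of $G^+$ and each super-layer, the vertices of $V(G)$ reachable via realisation paths through $V_h$ in that super-layer) and then exhibit a bounded-width tree decomposition of the resulting quotient, using the tree decomposition of $\tilde H$ together with the bounded-length, bounded-multiplicity shortcut structure. Your inflated bags are morally the right object, but they certify a statement (bounded layered treewidth) that is strictly weaker than what the theorem asserts.
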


\begin{proof}[Proof of \cref{meta}]
    For any $n$-vertex member $G$ of these graph families, \cref{gk-planar,apex-minor-free} show that $G$ is a subgraph of $H\boxtimes P$ with $\tw(H)\le t$.  \Cref{t-trees,product-lemma,dumb} then imply \cref{meta}.
\end{proof}

\subsection{Dependence on $\ell$}
\label{dependence-on-ell}

Throughout this section, we have assumed that $\ell$ and $t$ were fixed constants, independent of $n$.  We now describe the dependence of our results on $\ell$.  Since all of our upper bounds are based on \weirdref{simple-t-trees}{a} we begin by discussing \weirdref{simple-t-trees}{a} and its proof, which is the subject of \cref{simple-treewidth-section}.  We will show that, for fixed constant $t$, the bound for \weirdref{simple-t-trees}{a} is easily shown to be $\lrn(G)\in O(\ell^{t-1}\log^t\ell \log n/\log^{(t)} n)$.

Recall that the overall structure of the proof is by induction on $t$ with the base case $t=1$.  The case $t=1$ is described in \cref{logarithmic-base-case}, which explains how a simple $1$-tree (a collection of disjoint paths) has an $\ell$-ranking using $O(\log\ell)$ colours.  This establishes the result for the base case.


The first place in which $\ell$ is treated as a constant is in \cref{t-tree-slack}, in which the constant $a:=a(t,\ell,d)$ appears.  The only place \cref{t-tree-slack} is used is in the proof of \cref{t-tree-technical}, where it is applied with $d\in O(\ell)$.  Under these conditions, taking $a\in O(\ell^{t-1}\log^t \ell)$ is sufficient, as we now show. Within the proof of \cref{t-tree-slack}, \weirdref{simple-t-trees}{a} is used on the graph $\hat{H}'_i$ for each $i\in\{0,\ldots,h\}$ to show that $\lrn(\hat{H}'_i)\in O(s)$.  Here $\hat{H}'_i$ is a treewidth $t-1$ graph with $O(\ell^t c^4)$ vertices and $s=\log c/\log^{(t-1)} c$.  With the more precise inductive hypothesis, this becomes
\begin{align*}
    \lrn(\hat{H}'_i) & \in O\left(\ell^{t-2}\log^{t-1}\ell
    \cdot \frac{\log (\ell^{t} c^4))}{\log^{(t-1)} (\ell^t c^4)}\right) \\
    & = O\left(\ell^{t-2}\log^{t-1}\ell\cdot \frac{\log c + \log\ell}{\log^{(t-1)} c}\right) \\
    & \subseteq O\left(\ell^{t-2}\log^{t-1}\ell\cdot \frac{(\log c)(\log\ell)}{\log^{(t-1)} c}\right) \\
    & = O((\ell^{t-2}\log^t\ell)s)
\end{align*}
Doing this for each $i\in\{0,\ldots,h\}$ gives a colouring of $H'$ using the colour set $\{\lfloor a(k-c)-(h+1)q+1,\ldots,\lfloor a(k-c)\rfloor\}$ for some $q\in O((\ell^{t-2}\log^t\ell)s)$.  Here $h\in O(d)=O(\ell)$. By choosing a sufficiently large $a\in O(\ell^{t-1}\log^t\ell)$, so that $as > (h+1)q$, this colouring uses only colours from the set $\{\lfloor a(k-c-s)+1,\ldots,\lfloor a(k-c-1)\rfloor$.  The rest of the proof applies \cref{t-tree-slack} inductively on each of the uncoloured components of $H-V(H')$ to complete the colouring using smaller colours in the set $\{1,\ldots,\lfloor a(k-c-s)\rfloor\}$ and is unchanged.  The remainder of the proof is unchanged and proves the following refinement of \cref{simple-t-trees}:

\begin{itemize}
    \item[(\ref{simple-t-trees})] For any fixed integer $t\ge 1$ and every integer $\ell\ge 1$, every $n$-vertex graph $H$ of simple treewidth at most $t$ has $\lrn(H)\in O(\ell^{t-1}\log^t \ell\log n/\log^{(t)} n)$.
\end{itemize}

Using this refinement of \cref{simple-t-trees} gives the following refined versions of \cref{planar,bounded-genus,t-trees,meta}.
\begin{itemize}
    \item[(\ref{planar})] For every integer $\ell\ge 1$, every $n$-vertex planar graph $G$ has $\lrn(G)\in O((\ell\log \ell)^3\log n/\log^{(3)} n)$. (The additional factor of $\ell$ comes from the application of \cref{product-lemma} on the graph $H\boxtimes P\supseteq G$, where $\stw(H)\le 3$.)

    \item[(\ref{t-trees})] For any fixed integer $t\ge 0$ and every integer $\ell\ge 1$, every $n$-vertex graph $H$ of treewidth at most $t$ has $\lrn(H)\in O(\ell^t\log^{t+1}\ell \log n/\log^{(t+1)}n)$.

    \item[(\ref{bounded-genus})] For any integers $g\ge 0$, $\ell\ge 1$, every  $n$-vertex graph $G$ of Euler genus at most $g$ has $\lrn(G)\in O((\ell\log \ell)^3 g\log n/\log^{(3)} n)$.

    \item[(\ref{meta})] For each of the following graph classes $\mathcal{G}$:
    \begin{compactenum}
        \item the class of graphs excluding a particular apex graph $A$ as a minor; and
        \item the class of $(g,k)$-planar graphs,
    \end{compactenum}
    there exists an integers $c=c(\mathcal{G})$ and $b=b(\mathcal{G})$ such that every $n$-vertex graph $G\in\mathcal{G}$ has $\lrn(G)\in O(b\ell^{c-1}\log^c\ell\log n/\log^{(c)} n)$.
\end{itemize}

\section{Discussion}
\label{conclusion}

We have given asymptotically optimal bounds on the number of colours required by $\ell$-rankings of $n$-vertex graphs of treewidth $t$, graphs of simple treewidth $t$, planar 3-trees, outerplanar graphs, and planar graphs.  Prior to this work, the best known bounds for planar graphs were $\Omega(\log n/\log\log n)$ (trees) and $O(\log n)$.

Our upper bounds are constructive and lead to straightforward linear time algorithms for finding $\ell$-rankings of (simple) treewidth $t$ graphs, including planar 3-trees, and outerplanar graphs.  For a planar graph $G$ we can use the recent linear time algorithm of \citet{bose.morin.ea:optimal} for finding the simple $3$-tree $H$ and the path $P$ such that $G\subseteq H\boxtimes K_3\boxtimes P$ (\cref{product-structure}) to find an $\ell$-ranking of $G$ in $O(n)$ time.

For constant $d$, the lower and upper bounds for 2-ranking $d$-degenerate graphs are $\Omega(n^{1/3})$ and $O(\sqrt{n})$, respectively.  Closing this gap is an intriguing open problem.

\section*{Acknowledgement}

The authors are grateful to David Wood who, after reading an early draft of this paper, pointed us to the notion of simple treewidth, which greatly simplifies and unifies the exposition of our results.

\bibliographystyle{plainnat}
\bibliography{us}

\appendix
\section{Calculations}
\label{calculation}

\subsection{Calculation in the Proof \cref{t-tree-slack}}
\label{calculation-i}

\begin{align*}
    &\hspace{-2em} \left(\log^{(t-2)}\left(c+s+\tfrac{\log(c+s)}{\log^{(t-1)}(c+s)}\right)\right)^{c+s+\tfrac{\log(c+s)}{\log^{(t-1)}(c+s)}} \\
    & = \left(\log^{(t-2)}c\right)^{\left(c+s+\tfrac{\log(c+s)}{\log^{(t-1)}(c+s)}\right)
        \left(\tfrac{\log^{(t-1)}(c+s+\log(c+s)/\log^{(t-1)}(c+s))}{\log^{(t-1)}c}\right)} \\
        & \qquad\qquad\text{(change of base)} \\
    & < (\log^{(t-2)}c)^{\left(c+s+\tfrac{\log(c+s)}{\log^{(t-1)}(c+s)}\right)
        \left(1 + \tfrac{s+\log(c+s)/\log^{(t-1)}(c+s)}{\prod_{j=0}^{t-1}\log^{(j)}c}\right)} \\
        & \qquad\qquad\text{(by \cref{logi-ratio})}\\
    & < (\log^{(t-2)}c)^{\left(c+s+\tfrac{\log(c+s)}{\log^{(t-1)}(c+s)}\right)
        \left(1 + \tfrac{2\log(c+s)/\log^{(t-1)}(c+s)}{\prod_{j=0}^{t-1}\log^{(j)}c}\right)} \\
        & \qquad\qquad\text{(since $s=\log c/\log^{(t-1)} c$), so $s<\log(c+s)/\log^{(t-1)}(c+s)$)}\\
    & < (\log^{(t-2)}c)^{\left(c+s+\tfrac{\log(c+s)}{\log^{(t-1)}(c+s)}\right)
        \left(1 + \tfrac{2\log(c+s)/\log^{(t-1)}c}{\prod_{j=0}^{t-1}\log^{(j)}c}\right)} \\
        & \qquad\qquad\text{(since $c+s>c$)}\\
    & = (\log^{(t-2)}c)^{\left(c+s+\tfrac{\log(c+s)}{\log^{(t-1)}(c+s)}\right)
        \left(1 + \tfrac{2\log(c+s)}{c\log c\cdot\left(\prod_{j=2}^{t-1}\log^{(j)}(c+s)\right)\log^{(t-1)}c}\right)} \\
        & \qquad\qquad\text{(since $t\ge 2$)}\\
    & \le (\log^{(t-2)}c)^{\left(c+s+\tfrac{\log(c+s)}{\log^{(t-1)}(c+s)}\right)
        \left(1 + \tfrac{2\log(c+s)}{c\log c\cdot\log^{(t-1)}c}\right)} \\            & \qquad\qquad\text{(since $c\ge \tau(t-1)$, so $\textstyle \prod_{j=2}^{t-1}\log^{(j)}c\ge 1$)} \\
    & \le (\log^{(t-2)}c)^{\left(c+s+\tfrac{\log c+s/c}{\log^{(t-1)}(c+s)}\right)
        \left(1 + \tfrac{2\log c+2s/c}{c\cdot\log^{(t-1)}c}\right)} \\            & \qquad\qquad\text{(by \cref{log-x-plus-a})} \\
    & \le (\log^{(t-2)}c)^{c+s+\tfrac{\log c}{\log^{(t-1)}c} +
        \tfrac{2\log c}{\log^{(t-1)}c} + o\left(\tfrac{1}{\log^{(t-1)} c}\right)}  \\
    & = (\log^{(t-2)}c)^{c+\tfrac{4\log c}{\log^{(t-1)}c} + o\left(\tfrac{1}{\log^{(t-1)} c}\right)}  \\
    & = (1+o_c(1))\cdot c^4\cdot \left(\log^{(t-2)}c\right)^{c} \\
    & = O\left(c^4\cdot\left(\log^{(t-2)}c\right)^{c}\right) \enspace .
\end{align*}

Therefore
\[
  \frac{\left(\log^{(t-2)}\left(c+s+\tfrac{\log(c+s)}{\log^{(t-1)}(c+s)}\right)\right)^{c+s+\tfrac{\log(c+s)}{\log^{(t-1)}(c+s)}}}
  {(\log^{(t-2)} c)^c}
  \in O(c^4) \enspace .
\]

\subsection{Calculation in the Proof \cref{t-tree-technical}}
\label{calculation-ii}

\begin{align*}
    & \hspace{-2em}(\log^{(t-2)}(c+s))^{c+s} \\
    & = \left(\log^{(t-2)}\left(c+\tfrac{\log c}{\log^{(t-1)} c}\right)\right)^{c+\tfrac{\log c}{\log^{(t-1)} c}} \\
    & = \left(\log^{(t-2)}c\right)^{\left(c+\tfrac{\log c}{\log^{(t-1)} c}\right)
    \left(\tfrac{\log^{(t-1)}\left(c+\log c/\log^{(t-1)} c\right)}{\log^{(t-1)} c}\right)} &\text{(change of base)} \\
    & = \left(\log^{(t-2)}c\right)^{\left(c+\tfrac{\log c}{\log^{(t-1)} c}\right)
    \left(
    1 + \tfrac{\log c/\log^{(t-1)} c}{\prod_{j=0}^{t-1}\log^{(j)}c}
    \right)} & \text{(by \cref{logi-ratio})} \\
    & = \left(\log^{(t-2)}c\right)^{\left(c+\tfrac{\log c}{\log^{(t-1)} c}\right)
    \left(
    1 + \tfrac{1}{c\cdot\left(\prod_{j=2}^{t-1}\log^{(j)}c\right)\cdot\log^{(t-1)} c}
    \right)} & \text{(for $t\ge 2$)} \\
    & \le \left(\log^{(t-2)}c\right)^{\left(c+\tfrac{\log c}{\log^{(t-1)} c}\right)
    \left(
    1 + \tfrac{1}{c\cdot\log^{(t-1)} c}
    \right)} & \text{($c\ge \tau(t-1)$, so $\prod_{j=2}^{t-1}\log^{(j)}c\ge 1$)} \\
    & = \left(\log^{(t-2)}c\right)^{\left(c+\tfrac{\log c}{\log^{(t-1)} c}+
    \tfrac{1}{\log^{(t-1)} c} + \tfrac{\log c}{c\cdot(\log^{(t-1)} c)^2}
    \right)}  \\
    & = \left(\log^{(t-2)}c\right)^{\left(c+\tfrac{\log c}{\log^{(t-1)} c}
      + O_c\left(\tfrac{1}{\log^{(t-1)} c}\right)
    \right)}  \\
    & \in O\left(c\cdot\left(\log^{(t-2)}c\right)^c\right)
\end{align*}

Therefore
\[
    \frac{(\log^{(t-2)} (c+s))^{c+s}}{(\log^{(t-2)}c)^c} \in O(c) \enspace .
\]

\end{document}